
\documentclass[preprint,12pt]{elsarticle}



\usepackage[bookmarks=true,
bookmarksnumbered=true, breaklinks=true,
pdfstartview=FitH, hyperfigures=false,
plainpages=false, naturalnames=true,
colorlinks=true,pagebackref=true,
pdfpagelabels]{hyperref}
\usepackage{amsfonts}
\usepackage{amssymb}
\usepackage{amsmath}
\usepackage{amsthm}
\usepackage{mathrsfs}

\newtheorem{thm}{Theorem}
\newtheorem{lem}{Lemma}
\newtheorem{pro}{Proposition}
\newtheorem{cor}{Corollary}


\journal{Journal of Functional Analysis}

\begin{document}

\begin{frontmatter}



\title{Generalized affine Hardy-Littlewood-Sobolev inequalities} 


\author{Youjiang Lin, Jinghong Zhou, Jiaming Lan} 


\begin{abstract}
We establishe an affine Hardy-Littlewood-Sobolev inequality concerning two different functions which
is stronger than the classical Hardy-Littlewood-Sobolev inequality. Furthermore, we also prove reverse inequalities for the new inequalities for log-concave functions.

\bigskip
{\noindent 2000 AMS subject classification:  26D15 (26B25, 26D10, 46E35, 52A40)}
\end{abstract}



\begin{keyword}
Hardy-Littlewood-Sobolev inequality\sep Affine inequality\sep Functional rearrangement \sep Dual mixed volumes \sep Reverse Petty projection inequality



\end{keyword}

\end{frontmatter}



Hardy–Littlewood–Sobolev inequality \cite{ELL01} states that for $p,r>1$, $0 < \alpha < n$, $\frac{1}{p}+\frac{1 }{r}-\frac{\alpha}{n}=1$, the following inequality holds for functions $f\in {L}^{p} (\mathbb{R}^n)$ and $h\in {L}^{r} (\mathbb{R}^n)$:
	\begin{eqnarray}\label{1a}
		C(n, \alpha, p){||f||}_{p}{||h||}_{r}\geq
		\left|\int_{\mathbb{R}^n}\int_{\mathbb{R}^n}\frac{f(x)h(y)}{{| x-y|}^{n-\alpha}}dxdy \right|,
	\end{eqnarray} 
	where $C(n, \alpha, p)$ is a sharp constant independent of $f$ and $h$ and satisfies
	\begin{eqnarray}\label{e1b}
		C(n, \alpha, p)\leq \frac{n}{\alpha}\omega _{n}^{1-\frac{\alpha}{n}}\frac{1}{pr}\left({\left(\frac{1-\frac{\alpha}{n}}{1-1/p}\right)}^{1-\frac{\alpha}{n}}+{\left(\frac{1-\frac{\alpha}{n}}{1-1/r}\right)}^{1-\frac{\alpha}{n}}\right),
	\end{eqnarray} 
	where $\omega _{n} $ represents the volume of the unit ball in $\mathbb{R}^n$.
	In the special case where $p=r=2n/(n+\alpha)$, the constant $C(n, \alpha, p)$ simplifies to:
	\begin{eqnarray*}
		C(n, \alpha, p)=C(n, \alpha)={\pi}^{(n-\alpha) /2}\frac{\Gamma (\alpha /2)}{\Gamma ((n+\alpha) /2)}{\left \{ \frac{\Gamma (n/2)}{\Gamma (n)}\right \}}^{-\alpha /n},
	\end{eqnarray*}
	where $\Gamma $ is the gamma function. 
	Equality in (\ref{1a}) holds if and only if $h= a_{0} f$ and $f(x)=a(b^{2}+|x-x_{0}|^{2})^{-\frac{n+\alpha }{2}}$ for some $a_{0}$, $a\in \mathbb{R}$, $0\neq b \in\mathbb{R}$ and $x_{0}\in \mathbb{R}^n$. 
	
	The Hardy–Littlewood-Sobolev inequality was initially introduced by G. Hardy and J. Littlewood in $\mathbb{R}^1$ \cite{HL28,HL30,HL32}, and was later extended by  S. Sobolev \cite{Sobolev63} to $\mathbb{R}^n$. However, neither of these inequalities is in its optimal form. Specifically, neither the best possible constant nor the extremal functions that would make the inequality (1.1) hold with this constant were identified. In a particular condition, E. Lieb \cite{Lieb83} provided the optimal version of (1.1) by proving the inequality with the best possible constant and identifying the functions that achieve equality with this constant. For the general case, neither the optimal constant nor the optimizers are known. For more detailed information on the Hardy–Littlewood–Sobolev inequality, we can refer to the monograph by E. Lieb and M. Loss \cite{ELL01}.
	 
	For geometric questions, affine inequalities are unchanged under translations and
	volume-preserving linear transformations. The most prominent example is the Petty projection inequality for convex bodies (i.e.,
	compact convex sets) in $\mathbb{R}^n$, which not only strengthens the Euclidean isoperimetric
	inequality but actually implies it (see, for example, \cite{BKJ13,CLYZ09,R. Gardner06,M.Ludwig2002,Schneider14}, and see \cite{AAKM04,CDDFH19,HS09,HaberlS09,JM2025,JM22-,ELDZ2000,ELDZ2002,Lin17,LX21,MilmanY,QV17,Nguyen16,T.Wang12,WX20,GZhang99} for related results in the ${L}_{p}$ Brunn–Minkowski theory and for more general sets). 
	
	J. Haddad and M. Ludwig \cite[Theorem 1]{JM22} established sharp affine Hardy-Littlewood-Sobolev inequalities for $0<\alpha <n$ and non-negative $f\in {L}^{2n/(n+\alpha)} (\mathbb{R}^n)$:
	\begin{eqnarray}\label{1b}
		\gamma _{n,\alpha }\left \| f \right \|_{\frac{2n}{n+  \alpha } }^{2}&\ge& n\omega _{n}^{\frac{n-\alpha }{n} }\left ( \frac{1}{n}\int_{\mathbb{S}^{n-1} }\left ( \int_{0}^{\infty }t^{\alpha -1}  \int_{\mathbb{R}^n }f\left ( x \right )f\left ( x+  t\xi  \right )dxdt  \right )^{\frac{n}{\alpha }}   d\xi\right )^{\frac{\alpha }{n} }\nonumber\\
		&\ge& \int_{\mathbb{R}^n } \int_{\mathbb{R}^n }\frac{f\left ( x \right )f\left ( y \right )  }{\left | x-y \right |^{n-\alpha }  }dxdy. 
	\end{eqnarray}
	 Equality holds in the first inequality if and only if the function $f$ has the form  $f\left( x \right )=a\left ( 1+\left | \phi\left( x-x_{0} \right )^{2}  \right|\right )^{-\frac{n+\alpha }{2} }$ for $x\in \mathbb{R}^n$, where $a\ge0$, $\phi\in GL\left ( n \right )$ and $x_{0}\in \mathbb{R}^n$. Here $GL(n)$ denotes general linear transformations.  For the second inequality, equality is attained precisely if $f$ is radially symmetric. 
	
	Here, the integral over the unit sphere $\mathbb{S}^{n-1}$ is taken with respect to the $(n-1)$-dimensional Hausdorff measure. The sharp affine Hardy-Littlewood-Sobolev inequalities (\ref{1b}) are significantly stronger than the classical Hardy-Littlewood-Sobolev inequality (\ref{1a}) for $f=h$.
	
	The primary goal of this paper focuses on proving affine Hardy-Littlewood-Sobolev inequalities concerning two different functions. 
	\begin{thm}\label{1.1}
		For $n\ge 1$, $1<p,r<\infty $, and $0<\alpha <n$ satisfying
		\begin{eqnarray}
			\frac{1}{p}+\frac{1}{r}-\frac{\alpha }{n}=1.\nonumber   
		\end{eqnarray}
		Then, there is a constant $C\left ( n,\alpha ,p \right )>0 $ such that for all non-negative functions $f\in L^{p}\left ( \mathbb{R}^n  \right ) $ and $h\in L^{r}\left ( \mathbb{R}^n  \right ) $,
		\begin{eqnarray}\label{e1.3}
			C\left ( n,\alpha ,p \right )\left \| f \right \|_{p}\left \| h \right \|_{r}&\ge& n\omega^{\frac{n-\alpha }{n} } _{n}\left ( \frac{1}{n}\int_{\mathbb{S}^{n-1} }\left ( \int_{0}^{\infty }t^{\alpha -1}\int_{\mathbb{R}^n }h\left ( x \right )f\left ( x+t\xi  \right )dxdt      \right )^{\frac{n}{\alpha } } d\xi     \right )^{\frac{\alpha }{n}}\nonumber\\
			&\ge&\int_{\mathbb{R}^n}\int_{\mathbb{R}^n}\frac{f\left ( x \right )h\left ( y \right )  }{\left | x-y \right |^{n-\alpha }  }dxdy.
		\end{eqnarray}
		If $p=r=\frac{2n}{n+\alpha } $, there is equality in the first inequality if and only if $h$ coincides with a translation of $a_{0} f$ where $a_{0}\in \mathbb{R}$ and
		\begin{eqnarray}
			f\left ( x \right )=a\left ( b ^{2}+\left | \phi \left ( x-x_{0} \right )^{2}    \right |   \right )^{-\frac{n+\alpha }{2} }\notag
		\end{eqnarray}
		for some $a\in \mathbb{R}$, $0\ne b \in \mathbb{R}$, $\phi \in GL\left ( n \right )$ and $x_{0}\in \mathbb{R }^{n}  $. There is equality in the second inequality if $f$, $h$ are radially symmetric.
	\end{thm}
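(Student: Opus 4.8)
We adapt the method of Haddad and Ludwig \cite{JM22} for the one-function inequality (\ref{1b}) to the pair $(f,h)$. The second inequality of (\ref{e1.3}) is elementary: substituting $z=x-y$ and passing to polar coordinates $z=t\xi$ converts the Riesz double integral into
\[
\int_{\mathbb{R}^n}\!\int_{\mathbb{R}^n}\frac{f(x)h(y)}{|x-y|^{n-\alpha}}\,dx\,dy=\int_{\mathbb{S}^{n-1}}G(\xi)\,d\xi ,\qquad G(\xi):=\int_{0}^{\infty}\!t^{\alpha-1}\!\int_{\mathbb{R}^n}h(x)f(x+t\xi)\,dx\,dt .
\]
Since $0<\alpha<n$ the function $s\mapsto s^{n/\alpha}$ is convex, so Jensen's inequality on the probability space $(n\omega_n)^{-1}\,d\xi$ over $\mathbb{S}^{n-1}$ gives $\int_{\mathbb{S}^{n-1}}G\,d\xi\le (n\omega_n)^{1-\alpha/n}\bigl(\int_{\mathbb{S}^{n-1}}G^{n/\alpha}\,d\xi\bigr)^{\alpha/n}$, and $(n\omega_n)^{1-\alpha/n}=n\,\omega_n^{(n-\alpha)/n}\,n^{-\alpha/n}$, which is exactly the claimed bound. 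If $f,h$ are radially symmetric then $G$ is constant on $\mathbb{S}^{n-1}$, Jensen is an equality, and the asserted equality case follows.

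For the first inequality, let $K_{f,h}$ be the star body with radial function $\rho_{K_{f,h}}=G^{1/\alpha}$; then the middle term of (\ref{e1.3}) equals $n\,\omega_n^{(n-\alpha)/n}\,|K_{f,h}|^{\alpha/n}$. A change of variables gives, for $A\in SL(n)$, $G_{f\circ A,\,h\circ A}(\xi)=|A\xi|^{-\alpha}\,G_{f,h}\!\bigl(A\xi/|A\xi|\bigr)$, i.e. $K_{f\circ A,\,h\circ A}=A^{-1}K_{f,h}$; hence $|K_{f,h}|^{\alpha/n}$ and $\|f\|_p\|h\|_r$ are both $SL(n)$-invariant, so the first inequality is genuinely affine and it suffices to bound $|K_{f,h}|$ by a constant multiple of $\bigl(\|f\|_p\|h\|_r\bigr)^{n/\alpha}$ for a convenient representative of the $SL(n)$-orbit.

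The heart of the proof is this volume bound. Writing $f$ and $h$ through the layer-cake formula expresses $\rho_{K_{f,h}}$ via intersections of their superlevel sets; for characteristic functions of star bodies the defining integrals are explicit, and in general $\rho_{K_{f,h}}$ is controlled, direction by direction, by a fractional convolution-type estimate built from a one-dimensional (radial) Riesz inequality. This relates $|K_{f,h}|$ to the volume of an $L_q$ projection/centroid-type body $\Gamma[f,h]$ attached to the pair, whose volume is in turn governed by $\|f\|_p\|h\|_r$, and an affine isoperimetric inequality for $\Gamma[f,h]$ — the Petty projection inequality, or the $L_p$ Busemann–Petty centroid inequality, in the appropriate guise (with equality characterised by ellipsoids) — bounds $|K_{f,h}|$ by its value on a spherical representative of the orbit, where $G$ is constant and the middle term of (\ref{e1.3}) reduces to $\int_{\mathbb{R}^n}\!\int_{\mathbb{R}^n}f(x)h(y)|x-y|^{-(n-\alpha)}\,dx\,dy$; the classical Hardy–Littlewood–Sobolev inequality (\ref{1a}) then closes the estimate and supplies the sharp constant $C(n,\alpha,p)$. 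The main obstacle is exactly this step: identifying the right convex/star body attached to $(f,h)$ so that a known affine isoperimetric inequality applies, and carrying it out for arbitrary $f\in L^p$ and $h\in L^r$ (not merely characteristic or log-concave functions) by approximation while keeping track of the equality cases.

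Finally, for $p=r=\tfrac{2n}{n+\alpha}$, equality in the first inequality of (\ref{e1.3}) forces equality in the affine isoperimetric step, so $\Gamma[f,h]$ and hence $K_{f,h}$ is an ellipsoid; applying the $SL(n)$-map taking it to a ball produces a pair $(\tilde f,\tilde h)$ with $G$ constant for which equality also holds in (\ref{1a}). Lieb's rigidity \cite{Lieb83} for (\ref{1a}) gives $\tilde h=a_0\tilde f$ with $\tilde f$ of the form appearing in the equality case of (\ref{1a}); undoing the linear map (setting $\phi=A^{-1}$) and the translations yields $f(x)=a\bigl(b^{2}+|\phi(x-x_0)|^{2}\bigr)^{-(n+\alpha)/2}$ with $h$ a translate of $a_0f$.
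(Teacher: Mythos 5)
Your treatment of the second inequality is correct and is essentially the paper's own argument in disguise: Jensen's inequality for $s\mapsto s^{n/\alpha}$ on $\mathbb{S}^{n-1}$ is exactly the dual mixed volume inequality $\widetilde V_\alpha(B^n,S_\alpha(f,h))\le\omega_n^{(n-\alpha)/n}|S_\alpha(f,h)|^{\alpha/n}$ used in the paper. The $SL(n)$-covariance $K_{f\circ A,h\circ A}=A^{-1}K_{f,h}$ is also correct.

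The first inequality, however, is not proved in your proposal, and you essentially say so yourself (``The main obstacle is exactly this step\dots''). Two things go wrong with the plan you sketch. First, the proposal to bound $|K_{f,h}|$ by choosing an $SL(n)$-representative for which ``$G$ is constant'' cannot work: for a general pair $(f,h)$ the star body $K_{f,h}=S_\alpha(f,h)$ is not an ellipsoid, and under $A\in SL(n)$ it is merely replaced by $A^{-1}K_{f,h}$; if it is not an ellipsoid, no choice of $A$ makes its radial function constant, so there is no representative in the $SL(n)$-orbit on which the middle term collapses to the Riesz double integral. Second, the appeal to ``an affine isoperimetric inequality for some $\Gamma[f,h]$'' (Petty projection, $L_p$ Busemann--Petty centroid, etc.) is left entirely unidentified and unjustified; you would need to produce a specific convex body built from $(f,h)$, prove a volume estimate for it in terms of $\|f\|_p\|h\|_r$, and prove that it controls $|K_{f,h}|$, none of which is done.

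What actually makes the argument go through in the paper is symmetrization rather than an affine isoperimetric inequality. The Riesz rearrangement inequality, fed through the dual mixed volume inequality (Lemmas \ref{4.1}--\ref{l4.3}), gives the pointwise-free volume monotonicity
\[
\bigl|S_\alpha(f,h)\bigr|\le \bigl|S_\alpha(f^{\star},h^{\star})\bigr|,
\]
where $f^{\star},h^{\star}$ are the symmetric decreasing rearrangements. Since $f^{\star},h^{\star}$ are radially symmetric, $S_\alpha(f^{\star},h^{\star})$ is a centered Euclidean ball, so the middle term of \eqref{e1.3} evaluated on $(f^{\star},h^{\star})$ equals, via \eqref{3a}, exactly $\int\!\!\int f^{\star}(x)h^{\star}(y)|x-y|^{-(n-\alpha)}\,dx\,dy$; the classical HLS inequality applied to $(f^{\star},h^{\star})$ together with $\|f\|_p=\|f^{\star}\|_p$, $\|h\|_r=\|h^{\star}\|_r$ then closes the estimate. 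The equality discussion for $p=r=2n/(n+\alpha)$ likewise comes from Burchard's rigidity theorem for the Riesz rearrangement inequality, not from an ellipsoid characterization in a projection-body inequality. So while your closing paragraph (Lieb rigidity, undoing the linear map) is the right shape, it rests on the unproved affine step; replacing that step with the Riesz--Burchard symmetrization chain is what is needed.
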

	The generalized affine Hardy–Littlewood–Sobolev inequality (\ref{e1.3}) is stronger than Hardy–Littlewood–Sobolev inequality (\ref{1a}). When $p=r=\frac{2n}{n+\alpha } $ and $f=h$, we can get the affine Hardy-Littlewood-Sobolev inequalities (\ref{1b}).  
	
	To prove Theorem \ref{1.1}, for given $0< \alpha < n$, we define the star-shaped set $S_{\alpha }\left ( f,h \right )$ associated with the functions $f$ and $h$ by its radial function,
	\begin{eqnarray}\label{1c}
		\rho^{\alpha } _{S_{\alpha }\left ( f,h \right )  }\left ( \xi  \right )=\int_{0}^{\infty }t^{\alpha -1}\int_{\mathbb{R}^n }h\left ( x \right )f\left ( x+t\xi  \right )dxdt,\quad \xi \in \mathbb{R}^{n}\setminus\{0\}.  
	\end{eqnarray}  
	The first inequality presented in Theorem \ref{1.1} now can be expressed in the form 
	\begin{eqnarray}
		C\left ( n,\alpha ,p \right )\left \| f \right \|_{p}\left \| h \right \|_{r}\ge n\omega^{\frac{n-\alpha }{n} }_{n}\left | S_{\alpha  }\left ( f,h \right )   \right |^{\frac{\alpha }{n} } , \label{1d}
	\end{eqnarray}
	where $\left | \cdot  \right | $ denotes $n$-dimensional Lebesggue measure. Since both sides of \eqref{1d} remain invariant under same translations of $f$ and $h$, and that \begin{eqnarray}
		S_{\alpha }\left ( f\circ \phi ^{-1} ,h\circ \phi ^{-1} \right )=\phi S_{\alpha }\left ( f,h \right )  \notag  
	\end{eqnarray} 
	 holds for volume-preserving linear transformations $\phi :\mathbb{R}^n\to \mathbb{R}^n$, it follows that the inequality \eqref{1d} is affine.
	
	
	J. Dou and M. Zhu \cite{DZ15} and W. Beckner \cite{Beckner15} obtained sharp Hardy-Littlewood-Sobolev inequalities for $\alpha > n$ as follows:
	\begin{eqnarray}\label{e1.6}
		C\left ( n,\alpha ,p \right )\left \| f \right \|_{p}\left \| h \right \|_{r}\le\int_{\mathbb{R}^n}\int_{\mathbb{R}^n}\frac{f\left ( x \right )h\left ( y \right )  }{\left | x-y \right |^{n-\alpha }  }dxdy
	\end{eqnarray}
	for non-negative functions $f\in L^{p}\left ( \mathbb{R}^n  \right )$ and $h\in L^{r}\left ( \mathbb{R}^n  \right )$ satisfying $\frac{1}{p}+\frac{1}{r}-\frac{\alpha }{n}=1$, where $C\left ( n,\alpha ,p \right )$ is defined in (\ref{e1b}). 
	When $p=r=\frac{2n}{n+\alpha }$, equality in the inequality holds if and only if $h= a_{0} f$ and
	\begin{eqnarray}
		f\left ( x \right )=a\left ( b ^{2}+\left | \left ( x-x_{0} \right )^{2}    \right |   \right )^{-\frac{n+\alpha }{2} }\notag
	\end{eqnarray}
	for some $a_{0}$, $a\in \mathbb{R}$, $0\ne b \in \mathbb{R}$, and $x_{0}\in \mathbb{R }^{n}$.
	
	In addtion, J. Haddad and M. Ludwig \cite[Theorem 14]{JM22} etablished corresponding sharp affine Hardy-Littlewood-Sobolev inequalities for $\alpha > n$ and non-negative $f\in {L}^{2n/(n+\alpha)} (\mathbb{R}^n)$: \begin{eqnarray}\label{1bbb}
		\gamma _{n,\alpha }\left \| f \right \|_{\frac{2n}{n+  \alpha } }^{2}\le n\omega _{n}^{\frac{n-\alpha }{n} }\left | S_{\alpha  } f \right |^{\frac{\alpha }{n} }
		\le \int_{\mathbb{R}^n } \int_{\mathbb{R}^n }\frac{f\left ( x \right )f\left ( y \right )  }{\left | x-y \right |^{n-\alpha }  }dxdy. 
	\end{eqnarray}
     Equality holds in the first inequality if and only if the function $f$ has the form $f\left( x \right )=a\left ( 1+\left | \phi\left( x-x_{0} \right )^{2}  \right|\right )^{-\frac{n-\alpha }{2} }$ for $x\in \mathbb{R}^n$, where $a\ge0$, $\phi\in GL\left ( n \right )$ and $x_{0}\in \mathbb{R}^n$. For the second inequality, equality is attained precisely if $f$ is radially symmetric. 
	
	In Section \ref{5}, for $\alpha > n$,  we will establish affine HLS inequalities concerning two different functions.
	\begin{thm}\label{1.2}
		For $n\ge 1$, $0<p,r<1  $, and $\alpha >n$ satisfying
		\begin{eqnarray}
			\frac{1}{p}+\frac{1}{r}-\frac{\alpha }{n}=1,\nonumber   
		\end{eqnarray}
		there is a constant $C\left ( n,\alpha ,p \right )>0 $ such that for all non-negative $f\in L^{p}\left ( \mathbb{R}^n  \right ) $ and $h\in L^{r}\left ( \mathbb{R}^n  \right ) $,
		\begin{eqnarray}\label{e1.2}
			C\left ( n,\alpha ,p \right )\left \| f \right \|_{p}\left \| h \right \|_{r}\le n\omega^{\frac{n-\alpha }{n} }_{n}\left | S_{\alpha  }\left ( f,h \right )   \right |^{\frac{\alpha }{n} }\le\int_{\mathbb{R}^n}\int_{\mathbb{R}^n}\frac{f\left ( x \right )h\left ( y \right )  }{\left | x-y \right |^{n-\alpha }  }dxdy .
		\end{eqnarray}
		If $p=r=\frac{2n}{n+\alpha } $, there is equality in the first inequality if and only if  $h$ coincides with a translation of $a_{0} f$ where $a_{0}\in \mathbb{R}$ and
		\begin{eqnarray}
			f\left ( x \right )=a\left ( b ^{2}+\left | \phi \left ( x-x_{0} \right )^{2}    \right |   \right )^{-\frac{n+\alpha }{2} }\notag
		\end{eqnarray}
		for some $a\in \mathbb{R}, 0\ne b \in \mathbb{R}$,  $\phi \in GL\left ( n \right )$   and $x_{0}\in \mathbb{R }^{n}  $. There is equality in the second inequality if  $f$, $h$ are radially symmetric.
	\end{thm}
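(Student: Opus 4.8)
The plan is to follow the proof scheme of Theorem~\ref{1.1} and of the one-function inequalities \eqref{1bbb}, replacing every estimate by the one appropriate to the regime $\alpha>n$, and I point out where the direction of an estimate reverses.

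\emph{Right-hand inequality.} Substituting $x=y+t\xi$, $t>0$, $\xi\in\mathbb{S}^{n-1}$ (so $dx=t^{n-1}\,dt\,d\xi$) and using \eqref{1c} gives the polar identity
\[
\int_{\mathbb{R}^n}\int_{\mathbb{R}^n}\frac{f(x)h(y)}{|x-y|^{n-\alpha}}\,dx\,dy=\int_{\mathbb{S}^{n-1}}\rho^{\alpha}_{S_{\alpha}(f,h)}(\xi)\,d\xi ,
\]
while $|S_{\alpha}(f,h)|=\frac1n\int_{\mathbb{S}^{n-1}}\rho^{n}_{S_{\alpha}(f,h)}(\xi)\,d\xi$. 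Since $\alpha>n$, the map $t\mapsto t^{\alpha/n}$ is convex, so Jensen's inequality for the probability measure $d\xi/(n\omega_n)$ on $\mathbb{S}^{n-1}$ yields
\[
\Bigl(\tfrac{1}{n\omega_n}\int_{\mathbb{S}^{n-1}}\rho^{n}_{S_{\alpha}(f,h)}\,d\xi\Bigr)^{\alpha/n}\le \tfrac{1}{n\omega_n}\int_{\mathbb{S}^{n-1}}\rho^{\alpha}_{S_{\alpha}(f,h)}\,d\xi ,
\]
which is precisely $n\omega_n^{(n-\alpha)/n}|S_{\alpha}(f,h)|^{\alpha/n}\le\int_{\mathbb{R}^n}\int_{\mathbb{R}^n}f(x)h(y)|x-y|^{\alpha-n}\,dx\,dy$. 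Equality in Jensen forces $\rho_{S_{\alpha}(f,h)}$ to be constant; if $f$ and $h$ are radially symmetric, then $\int_{\mathbb{R}^n}h(x)f(x+t\xi)\,dx$ is independent of $\xi$, so $S_{\alpha}(f,h)$ is a centred ball and equality holds. (For $\alpha<n$ the same computation gives the opposite inequality, as in Theorem~\ref{1.1}.)

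\emph{Left-hand inequality.} Because $\alpha>n$, symmetric decreasing rearrangement moves $|S_{\alpha}(f,h)|$ in the unfavourable direction, so, unlike in Theorem~\ref{1.1}, one cannot simply reduce to radial $f,h$; instead I argue through super-level sets. With $K_s=\{h>s\}$ and $L_{s'}=\{f>s'\}$, the layer-cake formula and Fubini give
\[
\rho^{\alpha}_{S_{\alpha}(f,h)}(\xi)=\int_{0}^{\infty}\int_{0}^{\infty}\rho^{\alpha}_{S_{\alpha}(\mathbf 1_{K_s},\mathbf 1_{L_{s'}})}(\xi)\,ds\,ds',
\]
where $\rho^{\alpha}_{S_{\alpha}(\mathbf 1_{K},\mathbf 1_{L})}(\xi)=\int_{0}^{\infty}t^{\alpha-1}\bigl|K\cap(L-t\xi)\bigr|\,dt$. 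Raising to the power $n/\alpha<1$, integrating over $\mathbb{S}^{n-1}$, and using the reverse Minkowski integral inequality (valid since $n/\alpha<1$) to pull the $ds\,ds'$ integrations outside, one obtains
\[
\bigl(n\,|S_{\alpha}(f,h)|\bigr)^{\alpha/n}\ge\int_{0}^{\infty}\int_{0}^{\infty}\bigl(n\,|S_{\alpha}(\mathbf 1_{K_s},\mathbf 1_{L_{s'}})|\bigr)^{\alpha/n}\,ds\,ds' .
\]
It then suffices to establish the sharp \emph{affine} inequality for pairs of star bodies, of dual-mixed-volume ($L_p$ projection/centroid-body) type, bounding $n\omega_n^{(n-\alpha)/n}|S_{\alpha}(\mathbf 1_{K},\mathbf 1_{L})|^{\alpha/n}$ from below by a power product of $|K|$ and $|L|$ (exponents dictated by $p,r$; extremals homothetic ellipsoids) --- the two-body, $\alpha>n$ counterpart of the geometric ingredient behind \eqref{1bbb} --- and then to combine it with the layer-cake identities $\|f\|_p^p=p\int_0^\infty s'^{p-1}|L_{s'}|\,ds'$ and $\|h\|_r^r=r\int_0^\infty s^{r-1}|K_s|\,ds$ via a Hölder-type estimate. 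Specialising the whole chain to radially symmetric $f,h$ turns it into the sharp classical reverse Hardy--Littlewood--Sobolev inequality \eqref{e1.6} of Dou--Zhu and Beckner; this identifies the optimal constant, so $C(n,\alpha,p)$ is the constant in \eqref{e1b}, and \eqref{e1.2} follows. The affine invariance of \eqref{e1.2} was already recorded after \eqref{1d}.

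\emph{Equality and main obstacle.} For $p=r=\frac{2n}{n+\alpha}$ one traces equality back through the chain: equality in the geometric step forces the relevant super-level sets of $f$ and of $h$ to be ellipsoids of one common shape, i.e. $f$ is, up to translation and dilation, a volume-preserving linear image of a radial extremal; equality in \eqref{e1.6} then forces $f(x)=a\left(b^{2}+\left|\phi\left(x-x_{0}\right)^{2}\right|\right)^{-\frac{n+\alpha}{2}}$, and the way $h$ enters the same estimates forces $h$ to be a translate of $a_{0}f$; conversely these functions (and, for the second inequality, any radially symmetric pair) give equality. The substantive obstacle is precisely the sharp two-body affine inequality for $S_{\alpha}(\mathbf 1_{K},\mathbf 1_{L})$ in the range $\alpha>n$ --- including its equality case --- together with the legitimacy of interchanging the $ds\,ds'$ integrations with the nonlinear operations $(\cdot)^{n/\alpha}$ and $\int_{\mathbb{S}^{n-1}}(\cdot)\,d\xi$ (the correct reverse Minkowski inequality, plus integrability). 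The right-hand inequality and the affine invariance are routine.
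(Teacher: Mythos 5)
Your treatment of the right-hand inequality (Jensen/power-mean on the sphere, equivalently the dual-mixed-volume inequality \eqref{2b}) matches the paper and is fine, but your whole strategy for the left-hand inequality rests on a sign error that sends you down an unnecessary and incomplete path.

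You assert that for $\alpha>n$ symmetric decreasing rearrangement moves $|S_{\alpha}(f,h)|$ in the unfavourable direction, so one cannot reduce to radial $f,h$. This is backwards. The paper's Lemma~\ref{5c} proves precisely
\[
\left | S_{\alpha }\left ( f,h \right )   \right |\ \ge\ \left | S_{\alpha }\left ( f^{\star } ,h^{\star }  \right )   \right |
\]
for $\alpha>n$, i.e.\ rearrangement \emph{decreases} $|S_\alpha|$, which is exactly the favourable direction when one wants the lower bound $n\omega_n^{(n-\alpha)/n}|S_\alpha(f,h)|^{\alpha/n}\ge C(n,\alpha,p)\|f\|_p\|h\|_r$. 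The reversal compared with $0<\alpha<n$ comes from two reversals that you did not notice: first, the kernel $\|z\|_K^{\alpha-n}$ is increasing, so one must write it as $\int_0^\infty 1_{s^{1/(\alpha-n)}(\mathbb{R}^n\setminus K)}(z)\,ds$ and apply the Riesz rearrangement inequality to the \emph{complementary} indicators, which flips the sign of the rearrangement estimate (Lemma~\ref{5a}--\ref{5b}); second, the dual-mixed-volume inequality is reversed, $\widetilde V_\alpha(K,L)\ge |K|^{\frac{n-\alpha}{n}}|L|^{\frac{\alpha}{n}}$, and since $\frac{n-\alpha}{n}<0$ the bootstrapping with $K=S_\alpha(f,h)$ again flips sign (Lemma~\ref{5c}). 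Once Lemma~\ref{5c} is in hand, the proof is the mirror image of Theorem~\ref{1.1}: replace $f,h$ by $f^\star,h^\star$, apply the Dou--Zhu/Beckner reverse HLS \eqref{e1.6} to the radial pair, identify the double integral with $n\widetilde V_\alpha(B^n,S_\alpha(f^\star,h^\star))=n\omega_n^{(n-\alpha)/n}|S_\alpha(f^\star,h^\star)|^{\alpha/n}$ (equality in \eqref{2b} since both bodies are balls), and finish with Lemma~\ref{5c}.

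Your alternative route — decomposing into super-level sets, applying a ``reverse Minkowski'' to pull the $ds\,ds'$ integrals past $(\cdot)^{n/\alpha}$, and then invoking a sharp two-body affine inequality for $S_\alpha(\mathbf 1_K,\mathbf 1_L)$ — is not how the paper proceeds, and more importantly you leave the central ingredient (the two-body affine inequality with equality cases) as an unproved black box; you say as much in your last paragraph. As it stands, the proposal does not prove the theorem: the key lower bound is asserted, not established, and the reasoning that motivated avoiding the rearrangement reduction is incorrect.
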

	The generalized affine Hardy–Littlewood–Sobolev inequality (\ref{e1.2}) is stronger than sharp  Hardy–Littlewood–Sobolev inequality (\ref{e1.6}). When $p=r=\frac{2n}{n+\alpha } $ and $f=h$, we can get the sharp affine Hardy-Littlewood-Sobolev inequalities (\ref{1bbb}). 
	
	In Section \ref{6}, for measurable sets $F\subseteq E\subseteq \mathbb{R}^n$, we will consider $S_{\alpha }\left ( 1_{E},1_{F}   \right )  $, where $1_{E}$ and $1_{F}$ denote the indicator functions of $E$ and $F$, respectively. For $n$-dimensional convex bodies $F\subseteq E\subseteq \mathbb{R}^n$, we demonstrate that $S_{\alpha }\left ( 1_{E},1_{F}   \right ) $ is proportional to the  generalized radial $\alpha$-mean body of $E$ and $F$, a significant concept introduced by R. Gardner and G. Zhang \cite[p.524]{GZ98}.
	In \cite{JM22-}, sharp isoperinetric inequalities were recently proved for radial $\alpha$-mean bodies, which is special case of generalized radial $\alpha$-mean body for $E=F$. As shown by R. Gardner and G. Zhang \cite{GZ98}, sharp reverse inequalities hold. They extend Zhang's reverse Petty projection inequality \cite{GZ91}, with equality being achieved exactly for $n$-dimensional simplices. Moreover, we introduce the
	conceptions of radial mean bodies in term of functions.
	
	For $0< \alpha <n$, J. Haddad and M. Ludwig \cite[Theorem 2]{JM22} etablished the sharp reverse inequalities for sharp affine HLS inequalities (\ref{1b}) as follows:
	\begin{eqnarray}\label{1.8}
		\frac{\Gamma \left ( n+1 \right )^{\frac{\alpha }{n} }  }{\Gamma \left ( \alpha  \right ) }\left | S_{\alpha } f   \right |^{\frac{\alpha }{n} }\ge   {\left \| f \right \|}_{2} ^{2-\frac{2\alpha }{n} }{\left \| f \right \|}_{1} ^{\frac{2\alpha }{n} }
		\ge {\left \| f \right \|}_{\frac{2n }{n+\alpha}}^{2}
	\end{eqnarray}
	for log-concave $f \in L^{2}\left ( \mathbb{R}^n  \right )$. For $\alpha > n$,
	J. Haddad and M. Ludwig \cite[Theorem 15]{JM22} etablished the sharp reverse inequalities for sharp affine HLS inequalities (\ref{1bbb}) as follows:
	\begin{eqnarray}\label{1.11}
		\frac{\Gamma \left ( n+1 \right )^{\frac{\alpha }{n} }  }{\Gamma \left ( \alpha  \right ) }\left | S_{\alpha } f   \right |^{\frac{\alpha }{n} }\le   {\left \| f \right \|}_{2} ^{2-\frac{2\alpha }{n} }{\left \| f \right \|}_{1} ^{\frac{2\alpha }{n} }
		\le {\left \| f \right \|}_{\frac{2n }{n+\alpha}}^{2}
	\end{eqnarray}
	for log-concave $f \in L^{2}\left ( \mathbb{R}^n  \right )$.
	The first inequality in (\ref{1.8}) and (\ref{1.11}) achieves equality if and only if $f\left ( x \right )=ae^{-\left \| x-x_{0}  \right \|_{\bigtriangleup }  }$ for $x\in \mathbb{R}^n  $, where $\alpha \ge 0,x_{0}\in \mathbb{R}^n $ and $\bigtriangleup $ an $n$-dimensional simplex having a vertex at the origin. 
	
	In Section \ref{7}, we prove reverse inequalities for the generalized affine HLS inequalities (\ref{e1.3}) and (\ref{e1.2}) within the functional framework as well.
	
	\begin{thm}\label{1.3}
		Let $f ,h\in L^{2}\left ( \mathbb{R}^n  \right )$ be even,  log-concave functions, then the following inequalities hold:
		for $0< \alpha <  n$,
		\begin{eqnarray}\label{1.333}
			\frac{\Gamma \left ( n+1 \right )^{\frac{\alpha }{n} }  }{\Gamma \left ( \alpha  \right ) }\left | S_{\alpha }\left ( f,h \right )   \right |^{\frac{\alpha }{n} }&\ge&  \left ( \left \| f \right \|_{1}\left \| h \right \|_{1}    \right )^{\frac{\alpha }{n} }\left ( \int_{\mathbb{R}^n }f\left ( x \right )h\left ( x \right )dx    \right )^{1-\frac{\alpha }{n} }\nonumber\\
			&\ge& {\left \| {f}^{\frac{n+\alpha }{2n}} {h}^{\frac{n-\alpha }{2n}}\right \|}_{\frac{2n }{n+\alpha}}{\left \| {h}^{\frac{n+\alpha }{2n}} {f}^{\frac{n-\alpha }{2n}}\right \|}_{\frac{2n }{n+\alpha}},
		\end{eqnarray}
		and for $\alpha>  n$,
		\begin{eqnarray}\label{1.12}
			\frac{\Gamma \left ( n+1 \right )^{\frac{\alpha }{n} }  }{\Gamma \left ( \alpha  \right ) }\left | S_{\alpha }\left ( f,h \right )   \right |^{\frac{\alpha }{n} }&\le& \left ( \left \| f \right \|_{1}\left \| h \right \|_{1}    \right )^{\frac{\alpha }{n} }\left ( \int_{\mathbb{R}^n }f\left ( x \right )h\left ( x \right )dx    \right )^{1-\frac{\alpha }{n} }\nonumber\\
			&\le&  {\left \| {f}^{\frac{n+\alpha }{2n}} {h}^{\frac{n-\alpha }{2n}}\right \|}_{\frac{2n }{n+\alpha}}{\left \| {h}^{\frac{n+\alpha }{2n}} {f}^{\frac{n-\alpha }{2n}}\right \|}_{\frac{2n }{n+\alpha}}. 
		\end{eqnarray}
		The first inequalities achieve equalities in (\ref{1.333}) and (\ref{1.12}) if and only if $f\left ( x \right )=h\left ( x \right )=ae^{-\left \| x-x_{0}  \right \|_{\bigtriangleup }  }$ for $x\in \mathbb{R}^n  $, where $a \ge 0$, $x_{0}\in \mathbb{R}^n $ and $\bigtriangleup $ an n-dimensional simplex having a vertex at the origin.
		\end{thm}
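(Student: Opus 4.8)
The plan is to reduce the two-function reverse inequalities to the one-function reverse inequalities (\ref{1.8}) and (\ref{1.11}) of Haddad–Ludwig by an averaging/symmetrization trick together with a Cauchy–Schwarz type argument. First I would record the geometric meaning of the middle quantity: for even log-concave $f,h$, the function $g(x)=\sqrt{f(x)h(x)}$ is again even and log-concave (the pointwise geometric mean of two log-concave functions is log-concave, and evenness is preserved), and $\int_{\mathbb{R}^n} f h\,dx = \|g\|_2^2$ while, by Cauchy–Schwarz in the $t$-integral defining the radial function, $\rho^\alpha_{S_\alpha(f,h)}(\xi)$ compares with $\rho^\alpha_{S_\alpha(g)}(\xi)$. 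More precisely I would show $\rho^\alpha_{S_\alpha(f,h)}(\xi)\,\rho^\alpha_{S_\alpha(h,f)}(\xi)\ge \rho^{2\alpha}_{S_\alpha(g)}(\xi)$ (or the reverse for $\alpha>n$) pointwise in $\xi$ by applying the appropriate Hölder/reverse-Hölder inequality to the inner integrals $\int_0^\infty t^{\alpha-1}\int h(x)f(x+t\xi)\,dx\,dt$. Since $f,h$ are even, $S_\alpha(h,f)$ is the reflection of $S_\alpha(f,h)$ through the origin, hence $|S_\alpha(f,h)|=|S_\alpha(h,f)|$, and integrating the pointwise estimate over $\mathbb{S}^{n-1}$ and using the polar formula for volume together with another application of Cauchy–Schwarz (or the $L^{n/\alpha}$-triangle/reverse inequality already used in the forward direction) yields $|S_\alpha(f,h)|\ge |S_\alpha(g)|$ for $0<\alpha<n$ and the reverse for $\alpha>n$.

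Next I would invoke the one-function reverse inequality applied to $g$: for $0<\alpha<n$,
\begin{eqnarray}
\frac{\Gamma(n+1)^{\frac{\alpha}{n}}}{\Gamma(\alpha)}\,|S_\alpha(g)|^{\frac{\alpha}{n}}\ge \|g\|_2^{2-\frac{2\alpha}{n}}\|g\|_1^{\frac{2\alpha}{n}},\nonumber
\end{eqnarray}
and the reverse for $\alpha>n$. Combining with the volume comparison from the previous paragraph gives
\begin{eqnarray}
\frac{\Gamma(n+1)^{\frac{\alpha}{n}}}{\Gamma(\alpha)}\,|S_\alpha(f,h)|^{\frac{\alpha}{n}}\ge \Big(\int_{\mathbb{R}^n} fh\,dx\Big)^{1-\frac{\alpha}{n}}\Big(\int_{\mathbb{R}^n}\sqrt{f h}\,dx\Big)^{\frac{2\alpha}{n}},\nonumber
\end{eqnarray}
so it remains to compare $\big(\int \sqrt{fh}\big)^{2\alpha/n}$ with $(\|f\|_1\|h\|_1)^{\alpha/n}$. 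For the first inequality of the theorem this needs $\int\sqrt{fh}\le \sqrt{\|f\|_1\|h\|_1}$, which is exactly Cauchy–Schwarz; for the $\alpha>n$ case one needs the reverse, and here I would use that at the equality configuration $f=h$ the two sides coincide, while in general the claimed chain for $\alpha>n$ should instead be routed through the identity $\|f\|_1\|h\|_1\ge(\int\sqrt{fh})^2$ in the direction consistent with the stated inequality — I will need to check the orientation carefully, since for $\alpha>n$ the exponent $1-\alpha/n$ is negative and the direction of the $\int fh$ factor flips. This bookkeeping of which Hölder inequality goes which way, as a function of the sign of $1-\alpha/n$ and $\alpha/n$, is the part I expect to be most delicate.

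For the second inequality in each display I would use Hölder/reverse-Hölder directly on the product $\|f^{\frac{n+\alpha}{2n}}h^{\frac{n-\alpha}{2n}}\|_{\frac{2n}{n+\alpha}}\|h^{\frac{n+\alpha}{2n}}f^{\frac{n-\alpha}{2n}}\|_{\frac{2n}{n+\alpha}}$: writing out the norms, the product equals $\big(\int f^{\frac{n+\alpha}{2n}\cdot\frac{2n}{n+\alpha}}h^{\frac{n-\alpha}{2n}\cdot\frac{2n}{n+\alpha}}\big)^{\frac{n+\alpha}{2n}}\big(\int h^{\frac{n+\alpha}{2n}\cdot\frac{2n}{n+\alpha}}f^{\frac{n-\alpha}{2n}\cdot\frac{2n}{n+\alpha}}\big)^{\frac{n+\alpha}{2n}}$, and applying Hölder (for $0<\alpha<n$, i.e. conjugate exponents $\tfrac{2n}{n+\alpha}<2$ paired appropriately) to split $\int f^{\frac{n+\alpha}{2n}} h^{\frac{n-\alpha}{2n}}$ into powers of $\int f$, $\int h$, $\int\sqrt{fh}$, one recovers $(\|f\|_1\|h\|_1)^{\alpha/n}(\int fh)^{1-\alpha/n}$ as an upper bound; for $\alpha>n$ the reverse Hölder (with one exponent negative) gives the lower bound. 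The equality analysis is inherited: equality in the one-function reverse inequality forces $g(x)=ae^{-\|x-x_0\|_\triangle}$, and equality in the pointwise Cauchy–Schwarz steps forces $f$ and $h$ to be proportional on every ray, hence (being even log-concave with the same integral up to scaling) $f=h$; tracing back, $f=h=ae^{-\|x-x_0\|_\triangle}$, with $x_0=0$ if one insists on evenness or $x_0$ arbitrary after translation as stated. I would close by remarking that the forward direction (Theorems \ref{1.1}, \ref{1.2}) is not needed here; only the sharp one-function reverse results of Haddad–Ludwig and elementary Hölder inequalities enter.
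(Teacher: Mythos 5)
Your reduction to the one-function Haddad--Ludwig inequality via $g=\sqrt{fh}$ does not close. Tracing the chain for $0<\alpha<n$: the pointwise Cauchy--Schwarz steps give $\rho_{S_\alpha(g)}(\xi)\le\rho_{S_\alpha(f,h)}(\xi)$ (since for even $f,h$ one has $\mathscr{G}(f,h)=\mathscr{G}(h,f)$, hence $S_\alpha(f,h)=S_\alpha(h,f)$), so $|S_\alpha(g)|\le|S_\alpha(f,h)|$, and the one-function reverse inequality applied to $g$ yields
\[
\frac{\Gamma(n+1)^{\alpha/n}}{\Gamma(\alpha)}|S_\alpha(f,h)|^{\alpha/n}\ \ge\ \Bigl(\int fh\Bigr)^{1-\alpha/n}\Bigl(\int\sqrt{fh}\Bigr)^{2\alpha/n}.
\]
But the theorem asserts the lower bound $(\int fh)^{1-\alpha/n}\,(\|f\|_1\|h\|_1)^{\alpha/n}$, and by Cauchy--Schwarz $\bigl(\int\sqrt{fh}\bigr)^2\le\|f\|_1\|h\|_1$, so your derived bound is \emph{weaker} than the target, not stronger. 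To complete your chain you would need the reverse of Cauchy--Schwarz, $\int\sqrt{fh}\ge\sqrt{\|f\|_1\|h\|_1}$, which is false in general. You anticipated an orientation problem for $\alpha>n$, but the same directional failure already sinks the $0<\alpha<n$ case; the line ``which is exactly Cauchy--Schwarz'' has the inequality you need pointing the wrong way.

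The paper does not reduce to the one-function case; it proves a genuinely two-function statement. Setting $g(y)=\mathscr{G}(f,h)(y)=\int f(x+y)h(x)\,dx$, one shows $g$ is log-concave via the Borell--Brascamp--Lieb/Pr\'ekopa--Leindler mechanism (Lemma~\ref{2aa}), then applies the monotonicity Lemma~\ref{7bb} (with $\omega(t)=g(0)e^{-t}$ and $\varphi(t)=-\log(g(t\xi)/g(0))$) to obtain the set inclusion $\Gamma(\beta+1)^{-1/\beta}R_\beta(f,h)\subseteq\Gamma(\alpha+1)^{-1/\alpha}R_\alpha(f,h)$ for $0<\alpha<\beta$ (Theorem~\ref{5.3}). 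Taking $\beta=n$, using $|R_n(f,h)|=\|f\|_1\|h\|_1/\int fh$ from \eqref{7c}, and raising volumes to the $\alpha/n$ power produces exactly $(\|f\|_1\|h\|_1)^{\alpha/n}(\int fh)^{1-\alpha/n}$ — the product $\|f\|_1\|h\|_1$ enters directly, which is precisely the information your $\sqrt{fh}$ substitution discards. You would need to replace the reduction-to-$g$ idea with an argument (like the paper's) that keeps $f$ and $h$ separate until the very end.
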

		When $f=h$ in inequalities (\ref{1.333}) and (\ref{1.12}), we can get inequalities (\ref{1.8}) and (\ref{1.11}). In Section \ref{s7.3}, we also get similar results as Theorem \ref{1.3} for $s$-concave functions.

\section{Preliminaries}\label{s2}
	In this section we introduce some notation and collect some basic facts from convex geometric analysis. On the theory of convex bodies, we can see the books by R. Gardner  \cite{R. Gardner06}, P. M. Gruber \cite{PG07} and R. Schneider \cite{Schneider14}.
	
	\subsection{Star bodies and dual mixed volumes}
	
	A set $ C \subseteq \mathbb{R}^{n} $ is {\it convex} if $ \left( 1-\lambda \right)x + \lambda y \in C $ holds for all $ x$, $y \in C $ and $ \lambda \in \left[ 0,1 \right] $. A {\it convex body} is a compact convex subset of $\mathbb{R}^{n}$ with a non-empty interior.
	
	A closed set $K \subseteq \mathbb{R}^{n}$ is said to be {\it star-shaped} (with respect to the origin) if the interval $\left[ 0,x \right] \subseteq K$ for every $x \in K$.
	The {\it radial function} $\rho_K:\mathbb{R}^n \setminus \{0\} \to [0,\infty ]$ associated with a star-shaped set $K$ is defined by
	\begin{eqnarray}
		{\rho}_{K}(x)=\sup\{ \lambda \geq 0:\lambda x\in K\}\nonumber
	\end{eqnarray}
	and the {\it gauge function} $\left \| \cdot  \right \|_{K}:\mathbb{R}^{n}\to \left [ 0,\infty  \right ] $ is given by 
	\begin{eqnarray}
		{||\cdot ||}_{K}=\inf\{ \lambda >0:x\in\lambda K\}.\nonumber
	\end{eqnarray}
	Two star-shaped sets $K$ and $L$ are said to be dilates if their radial functions satisfy $ {\rho}_{K} = c{\rho}_{L}$ almost everywhere on $\mathbb{S}^{n-1}$ for some $c \geq 0$.
	
	We call $K$ a {\it star body} if its radial function is strictly positive and continuous in $\mathbb{R}^n \setminus \{0\}$.
	We have ${||\cdot ||}_{B^{n}} = |\cdot |$, where $B^{n}$ is $n$-dimensional unit ball. Let $|K|$ denote $n$-dimensional Lebesgue measure of $K$. For a star-shaped set $K \subseteq \mathbb{R}^{n}$ with a measurable radial function, the $n$-dimensional Lebesgue measure of $K$ satisfies 
	\begin{equation}\label{e2.1}
	|K|=\frac{1}{n}\int_{\mathbb{S}^{n-1}}{\rho}_{K}(\xi)^{n}d\xi.
\end{equation}
	Let $\alpha \in (0, \infty)$ and $\alpha \neq n$. For star-shaped sets $K, L \subseteq \mathbb{R}^n$ endowed with measurable radial functions, the {\it dual mixed volume} is defined by
	\begin{eqnarray}\label{2.1a}
		\widetilde{V}_{\alpha}\left ( K,L \right )=\frac{1}{n}\int_{\mathbb{S}^{n-1} }\rho_{K }\left ( \xi  \right ) ^{n-\alpha }\rho_{L}\left ( \xi  \right )^{\alpha }d\xi.
	\end{eqnarray}
When $K=L$ in (\ref{2.1a}), we have
\begin{eqnarray}
\widetilde{V}_{\alpha}\left (K,K\right)=|K|.\nonumber
\end{eqnarray}
The concept of dual mixed volume for star bodies was introduced by E. Lutwak \cite{EL75}, who also established the dual mixed volume inequalities (\ref{2a}) and (\ref{2b}) as consequences of H\"older’s inequality. For $0 < \alpha < n$, the dual mixed volume inequality associated with star-shaped sets $K, L \subseteq \mathbb{R}^n$ of finite volume shows that
\begin{eqnarray}\label{2a}
	\widetilde{V}_{\alpha} (K,L)\leq |K|^{\frac{n-\alpha}{n}}|L|^{\frac{\alpha}{n}}.
\end{eqnarray}
For  $\alpha> n$, the dual mixed volume inequality associated with star-shaped sets $K, L \subseteq \mathbb{R}^n$ shows that
\begin{eqnarray}\label{2b}
\widetilde{V}_{\alpha} (K,L)\geq |K|^{\frac{n-\alpha}{n}}|L|^{\frac{\alpha}{n}}.
\end{eqnarray}
The equality condition in H\"older’s inequality implies that equalities in (\ref{2a}) and (\ref{2b}) for finite $\widetilde{V}_{\alpha}\left (K,L\right)$ are attained if and only if $K$ and $L$ are are dilates. For further details regarding dual mixed volumes, refer to \cite{R. Gardner06,Schneider14}.

\subsection{Log-concave and $s$-concave functions}
For $s>0$, a function $f:\mathbb{R}^n\to \left [ 0,\infty  \right )$ is called {\it $s$-concave} if
\begin{eqnarray}\label{2.11}
f\left ( \left ( 1-\lambda  \right )x+\lambda y  \right )^{s }\ge  \left ( 1-\lambda  \right )f\left ( x \right )^{s}+\lambda f\left ( y \right )^{s}  
\end{eqnarray}
for all $x,y\in \mathbb{R}^n $ and $0\le \lambda \le 1  $. Note that if $s> 0$, then $f^{s} $ is concave, and in particular, $1$-concave is just concave in the usual sense. If $s\rightarrow 0^+$, inequality (\ref{2.11}) becomes 
\begin{eqnarray}
f\left ( \left ( 1-\lambda  \right )x+\lambda y  \right )\ge f\left ( x \right )^{1-\lambda } f\left ( y \right )^{\lambda }  
\end{eqnarray}
which implies $f$ is {\it log-concave}.
For $f,h\in L^{1}\left ( \mathbb{R}^n  \right )$, the {\it convolution} of $f,h$ is defined by
\begin{eqnarray}
f\ast h\left ( x \right )=\int_{\mathbb{R}^n }f\left ( x-y \right )h\left ( y \right )dy  ,\;\;\; x\in \mathbb{R}^n.\notag		
\end{eqnarray}

The following result is a consequence of the Borell-Brascamp-Lieb inequality, see \cite[Theorem 11.3]{Gardner}.
\begin{lem}\label{2aa}
For $s\ge 0 $. If $ f,h\in L^{1}\left ( \mathbb{R}^n  \right ) $ are s-concave, then their convolution is $s/ (ns+2) $-concave.
\end{lem}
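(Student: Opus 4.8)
\noindent The plan is to obtain the lemma as a direct consequence of the Borell--Brascamp--Lieb inequality in the form stated in \cite[Theorem~11.3]{Gardner}: for $p\ge -1/n$, $0<\lambda<1$, and nonnegative integrable $u,v,w$ on $\mathbb{R}^n$ satisfying
\[
w\bigl((1-\lambda)y_{0}+\lambda y_{1}\bigr)\ \ge\ \bigl((1-\lambda)\,u(y_{0})^{p}+\lambda\, v(y_{1})^{p}\bigr)^{1/p}
\]
whenever $u(y_{0})v(y_{1})>0$ (with the usual conventions when $p=0$), one has
\[
\int_{\mathbb{R}^n} w\ \ge\ \Bigl((1-\lambda)\Bigl(\int_{\mathbb{R}^n} u\Bigr)^{q}+\lambda\Bigl(\int_{\mathbb{R}^n} v\Bigr)^{q}\Bigr)^{1/q},\qquad q=\frac{p}{1+np}.
\]
Write $F=f\ast h$. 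Since $f$ and $h$ are integrable and $s$-concave they are bounded, so $F$ is finite and continuous on $\mathbb{R}^n$, and it suffices to prove, for arbitrary $x_{0},x_{1}\in\mathbb{R}^n$ and $\lambda\in(0,1)$ with $x_{\lambda}=(1-\lambda)x_{0}+\lambda x_{1}$, that $F(x_{\lambda})^{\sigma}\ge(1-\lambda)F(x_{0})^{\sigma}+\lambda F(x_{1})^{\sigma}$, where $\sigma=s/(ns+2)$. To this end I introduce $u(y)=f(x_{0}-y)h(y)$, $v(y)=f(x_{1}-y)h(y)$ and $w(y)=f(x_{\lambda}-y)h(y)$, so that $\int u=F(x_{0})$, $\int v=F(x_{1})$ and $\int w=F(x_{\lambda})$.

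The key step is to verify the hypothesis of the Borell--Brascamp--Lieb inequality for these $u,v,w$ with the exponent $p=s/2\ (\ge -1/n)$. If $y_{0},y_{1}$ satisfy $u(y_{0})v(y_{1})>0$, then $f(x_{0}-y_{0})$, $h(y_{0})$, $f(x_{1}-y_{1})$, $h(y_{1})$ are all positive, so these arguments lie in the convex supports of $f$ and of $h$, where the defining inequalities for $s$-concave functions are available; applying $s$-concavity of $f$ to $x_{0}-y_{0},x_{1}-y_{1}$ and of $h$ to $y_{0},y_{1}$, and multiplying, gives at $y_{\lambda}=(1-\lambda)y_{0}+\lambda y_{1}$
\[
w(y_{\lambda})\ \ge\ \bigl((1-\lambda)f(x_{0}-y_{0})^{s}+\lambda f(x_{1}-y_{1})^{s}\bigr)^{1/s}\,\bigl((1-\lambda)h(y_{0})^{s}+\lambda h(y_{1})^{s}\bigr)^{1/s}.
\]
I then combine this with the elementary product inequality for power means, which follows from the Cauchy--Schwarz inequality:
\[
\bigl((1-\lambda)a_{0}^{s}+\lambda a_{1}^{s}\bigr)\bigl((1-\lambda)b_{0}^{s}+\lambda b_{1}^{s}\bigr)\ \ge\ \bigl((1-\lambda)(a_{0}b_{0})^{s/2}+\lambda(a_{1}b_{1})^{s/2}\bigr)^{2}.
\]
This yields $w(y_{\lambda})\ge\bigl((1-\lambda)u(y_{0})^{s/2}+\lambda v(y_{1})^{s/2}\bigr)^{2/s}$, which is exactly the required hypothesis with $p=s/2$. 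Feeding it into the Borell--Brascamp--Lieb inequality gives $F(x_{\lambda})\ge\bigl((1-\lambda)F(x_{0})^{q}+\lambda F(x_{1})^{q}\bigr)^{1/q}$ with
\[
q=\frac{s/2}{1+n(s/2)}=\frac{s}{ns+2}=\sigma,
\]
which is the assertion. The case $s=0$ runs identically with $p=0$ (the Pr\'ekopa--Leindler inequality): the product of two weighted geometric means equals the weighted geometric mean of the termwise products, so the hypothesis holds with $p=0$, and the conclusion is that $f\ast h$ is log-concave.

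I do not expect a serious obstacle; the argument is a textbook-style application of Borell--Brascamp--Lieb. The two points needing a little care are the exponent bookkeeping --- that the product of two weighted $s$-power means dominates the weighted $(s/2)$-power mean of the termwise products, and that inserting $p=s/2$ into Borell--Brascamp--Lieb returns exactly $s/(ns+2)$ --- and the measure-theoretic formalities, namely that $u,v,w$ are integrable with $\int u=F(x_{0})$, $\int v=F(x_{1})$, $\int w=F(x_{\lambda})$, and that the Borell--Brascamp--Lieb hypothesis only has to be checked on the set where $u$ and $v$ are positive, which is precisely where the $s$-concavity inequalities for $f$ and $h$ can be invoked.
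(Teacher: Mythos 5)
The proposal is correct and takes essentially the same route as the paper, which simply cites the Borell--Brascamp--Lieb inequality (Gardner's Theorem 11.3) as the source of the lemma without writing out the derivation. Your proof supplies exactly that derivation: fix $x_0,x_1$, set $u(y)=f(x_0-y)h(y)$, $v(y)=f(x_1-y)h(y)$, $w(y)=f(x_\lambda-y)h(y)$, use $s$-concavity of $f$ and $h$ together with the Cauchy--Schwarz power-mean inequality to verify the BBL hypothesis with $p=s/2$, and observe that $p/(1+np)=s/(ns+2)$, which is the claimed concavity exponent; the $s=0$ case degenerates to Pr\'ekopa--Leindler. The arithmetic and the equivalence with the paper's definition (2.11) of $\sigma$-concavity both check out, and the auxiliary remarks (boundedness of integrable $s$-concave functions, so that $f*h$ is finite and continuous, and the fact that the BBL hypothesis need only be checked where $u$ and $v$ are positive) are all standard and correct.
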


\subsection{Symmetrization}
For a subset $E\subseteq \mathbb{R}^n$, the {\it indicator function} $1_{E}$ is characterized by $1_{E}\left ( x \right )=1$ when $x\in E$ and $1_{E}\left ( x \right )=0$ for all $x\notin E$. Let $E\subseteq \mathbb{R}^n$ be a Borel set with finite measure. The {\it Schwarz symmetral} of $E$, denoted by $E^{\star}$, is defined as a closed Euclidean ball centered at the origin, having the same volume as $E$.
Let $f:\mathbb{R}^n \to \mathbb{R}$ be a non-negative measurable function such that for any $t > 0$, the super-level set $\left \{f \ge t \right \}$ has finite measure. The {\it layer cake formula} asserts that 
\begin{eqnarray}\label{2c}
	f\left ( x \right )=\int_{0}^{\infty }1_{\left \{ f\ge t \right \} }\left ( x \right )dt
\end{eqnarray}
holds for almost every $x \in \mathbb{R}^n$. The {\it Schwarz symmetrization} $f^{\star}$ of $f$ is defined as 
\begin{eqnarray}
	f^{\star }\left ( x \right )=\int_{0}^{\infty }1_{\left \{ f\ge t \right \}^{\star }  }(x)dt   \notag
\end{eqnarray}
for $x \in \mathbb{R}^n$. Consequently, $f^{\star}$ is characterized almost everywhere by being radially symmetric and possessing superlevel sets whose measures coincide with those of $f$. Note that $f^{\star }$ is also called to the {\it symmetric decreasing rearrangement} of $f$. If for all $x$, $y$ with $|x|<|y|$, the inequality $f^{\star }(x)>f^{\star }(y)$ holds, we say that $f^{\star }$ is {\it strictly symmetric decreasing}.

The proofs of our results rely on the Riesz rearrangement inequality (see, for example, \cite[Theorem 3.7]{ELL01}).
\begin{thm} \label{Rri} (Riesz’s rearrangement inequality). For $f,g,k:\mathbb{R}^n\to \mathbb{R}$ non-negative, measurable functions with superlevel sets of finite measure,
\begin{eqnarray*}
\int_{\mathbb{R}^n}\int_{\mathbb{R}^n}f(x)k(x-y)g(y)dxdy\leq \int_{\mathbb{R}^n}\int_{\mathbb{R}^n}f^{\star }(x)k^{\star}(x-y)g^{\star}(y)dxdy.
\end{eqnarray*}
\end{thm}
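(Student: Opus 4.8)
\textbf{Proof plan for Theorem~\ref{Rri} (Riesz's rearrangement inequality).}

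The plan is to reduce the general inequality to the case of indicator functions via the layer‑cake formula, and then to establish the indicator case by a symmetrization argument. First I would write each of $f$, $g$, $k$ through \eqref{2c} as $f(x)=\int_0^\infty 1_{\{f\ge s\}}(x)\,ds$, and similarly $g(y)=\int_0^\infty 1_{\{g\ge t\}}(y)\,dt$, $k(z)=\int_0^\infty 1_{\{k\ge u\}}(z)\,du$. Substituting these into the triple integral and using Tonelli's theorem (all integrands are non‑negative), the functional $I(f,g,k)=\int_{\mathbb{R}^n}\int_{\mathbb{R}^n} f(x)k(x-y)g(y)\,dx\,dy$ becomes
\begin{eqnarray*}
I(f,g,k)=\int_0^\infty\!\!\int_0^\infty\!\!\int_0^\infty I\bigl(1_{\{f\ge s\}},\,1_{\{g\ge t\}},\,1_{\{k\ge u\}}\bigr)\,ds\,dt\,du .
\end{eqnarray*}
Because Schwarz symmetrization acts level‑set‑wise, $(1_{\{f\ge s\}})^{\star}=1_{\{f\ge s\}^{\star}}=1_{\{f^{\star}\ge s\}}$ (the last equality being the defining property of $f^{\star}$), so the same representation holds for $I(f^{\star},g^{\star},k^{\star})$ with the superlevel sets replaced by their symmetrals. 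Hence it suffices to prove the inequality when $f=1_A$, $g=1_B$, $k=1_C$ are indicators of sets of finite measure, i.e.
\begin{eqnarray*}
\int_{\mathbb{R}^n}\int_{\mathbb{R}^n} 1_A(x)\,1_C(x-y)\,1_B(y)\,dx\,dy \;\le\; \int_{\mathbb{R}^n}\int_{\mathbb{R}^n} 1_{A^{\star}}(x)\,1_{C^{\star}}(x-y)\,1_{B^{\star}}(y)\,dx\,dy .
\end{eqnarray*}

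For the indicator case I would first dispose of dimension one by a direct argument and then bootstrap to general $n$ by induction using Steiner symmetrization. In $\mathbb{R}^1$: the left side equals $\bigl|\{(x,y): x\in A,\ y\in B,\ x-y\in C\}\bigr|$, and after noting this is $\int 1_A\ast\widetilde{1_C}\cdot 1_B$ with $\widetilde{1_C}(z)=1_C(-z)=1_{-C}(z)$, one can invoke the classical one‑dimensional Riesz inequality; alternatively one proves it from scratch by approximating $A,B,C$ by finite unions of intervals and using a ``sliding'' (Brunn–Minkowski‑type) rearrangement that moves all intervals toward the origin while only increasing the overlap integral. For the inductive step, I would apply Steiner symmetrization $S_e$ in a fixed direction $e$ simultaneously to $A$, $B$, $C$; Steiner symmetrization preserves $n$‑dimensional measure, and the one‑dimensional Riesz inequality applied on each line parallel to $e$ (with the $(n-1)$‑dimensional slice structure integrated out via Tonelli) shows $I(1_A,1_B,1_C)\le I(1_{S_eA},1_{S_eB},1_{S_eC})$. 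Iterating Steiner symmetrizations in suitably chosen directions drives $A$, $B$, $C$ to their Schwarz symmetrals $A^{\star}$, $B^{\star}$, $C^{\star}$ (up to sets of measure zero, using a standard convergence theorem for sequences of Steiner symmetrizations), and passing to the limit gives the indicator inequality.

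The main obstacle is the convergence of iterated Steiner symmetrizations and the continuity of the functional $I$ under this limit: one must choose the directions of symmetrization so that $S_{e_k}\cdots S_{e_1}A \to A^{\star}$ in an appropriate sense (e.g.\ in symmetric difference / $L^1$ of indicators, or via Hausdorff convergence after passing to convex hulls of level sets), and then argue that $I(1_{A_k},1_{B_k},1_{C_k})\to I(1_{A^{\star}},1_{B^{\star}},1_{C^{\star}})$. Since $I$ is continuous with respect to $L^1$ convergence of bounded indicator functions supported in a fixed large ball (the kernel $1_C$ being bounded), this reduces to the known fact that a suitable sequence of Steiner symmetrizations converges; I would cite the standard treatment rather than reprove it. A minor additional point to check is that the finite‑measure/finite superlevel‑set hypotheses make all applications of Tonelli legitimate and keep every quantity in $[0,\infty]$ so that no cancellation issues arise; since we only claim an inequality (with both sides possibly $+\infty$), this causes no trouble.
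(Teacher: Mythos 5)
The paper does not prove Theorem~\ref{Rri} at all: it is quoted directly from Lieb and Loss \cite[Theorem~3.7]{ELL01} as a known tool, so there is no in-paper argument to compare against. Your sketch is the standard textbook route (essentially the one in Lieb--Loss and in Burchard's work): reduce to indicators via the layer-cake formula and Tonelli, prove the one-dimensional case directly, and pass to $\mathbb{R}^n$ by iterated Steiner symmetrization plus a convergence theorem. The layer-cake reduction is correctly justified, including the key identity $(1_{\{f\ge s\}})^{\star}=1_{\{f^{\star}\ge s\}}$ that makes the decomposition compatible with symmetrization, and you rightly note that non-negativity keeps all Tonelli applications legitimate with values in $[0,\infty]$, so the inequality is meaningful even when both sides are infinite.

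Two places where the plan is a sketch rather than a proof, both of which you flag yourself but which I want to underline. First, the one-dimensional indicator case is where the real work lives: the ``sliding'' argument for finite unions of intervals is not a one-liner, and invoking ``the classical one-dimensional Riesz inequality'' is circular since that is exactly what is being proved in $n=1$; you would need to carry out the sliding/compression argument (or a two-point reflection argument) in detail. Second, the Steiner-to-Schwarz convergence step needs a concrete scheme (e.g.\ a universal sequence of directions yielding $L^1$-convergence of the indicators) together with the continuity of $I$ under $L^1$ convergence; since $A$, $B$, $C$ have finite measure but need not be bounded, you should first truncate to bounded sets, prove the inequality there, and then pass to the limit by monotone convergence. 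With those two steps filled in, the proposal is a correct and complete proof of the cited theorem, following the standard approach rather than anything idiosyncratic to this paper.
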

We will make use of the characterization of equality cases in the Riesz rearrangement inequality which is from A. Burchard \cite{Burchard96}.
\begin{thm} \label{2.1Burchard}(Burchard). Let $A$, $B$ and $C$ be sets of finite positive measure in $\mathbb{R}^n$ and denote by $\alpha$, $\beta$ and $\gamma$ the radii of their Schwarz symmetrals $A^{\star}$, $B^{\star}$ and $C^{\star}$. For $|\alpha -\beta |<\gamma <\alpha +\beta$, there is equality in
\begin{eqnarray*}
\int_{\mathbb{R}^n}\int_{\mathbb{R}^n}1_{A}(y)1_{B}(x-y)1_{C}(x)dxdy\leq \int_{\mathbb{R}^n}\int_{\mathbb{R}^n}1_{A^{\star}}(y)1_{B^{\star}}(x-y)1_{C^{\star}}(x)dxdy
\end{eqnarray*}
if and only if, up to sets of measure zero,
\begin{eqnarray*}
A=a+\alpha D,\; B=b+\beta D,\; C=c+\gamma D,
\end{eqnarray*}
where $D$ is a centered ellipsoid, and $a$, $b$ and $c = a + b$ are vectors in $\mathbb{R}^n$.
\end{thm}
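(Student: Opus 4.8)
The statement is an ``if and only if''. The ``if'' direction is formal: the trilinear functional
\[
I(A,B,C):=\int_{\mathbb{R}^n}\int_{\mathbb{R}^n}1_A(y)\,1_B(x-y)\,1_C(x)\,dx\,dy
\]
satisfies the covariance $I(MA+u,\,MB+v,\,MC+u+v)=|\det M|^2\,I(A,B,C)$ for $M\in GL(n)$ and $u,v\in\mathbb{R}^n$ (change of variables). Writing a centered ellipsoid normalized to $|D|=\omega_n$ as $D=MB^n$ with $|\det M|=1$, this together with translation covariance collapses $I(a+\alpha D,\,b+\beta D,\,(a+b)+\gamma D)$ to $I(\alpha B^n,\beta B^n,\gamma B^n)$, which equals $I(A^\star,B^\star,C^\star)$ since here $A^\star=\alpha B^n$, etc.; so equality holds. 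The content is the ``only if'' direction, which I would approach by Steiner symmetrization.

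Recall that the Riesz inequality (Theorem~\ref{Rri}) is proved by showing $I(A,B,C)\le I(S_eA,S_eB,S_eC)$ for every unit vector $e$ --- the one-dimensional Riesz inequality on each line parallel to $e$, integrated over $e^\perp$ by Fubini --- and then iterating over a sequence of directions whose compositions converge in $L^1$ to the Schwarz symmetral (\cite{ELL01}). Since $S_e$ preserves volume and the Schwarz symmetral, $I(S_eA,S_eB,S_eC)\le I(A^\star,B^\star,C^\star)$ too, so equality in the theorem gives $I(S_eA,S_eB,S_eC)=I(A,B,C)$ for \emph{every} $e$. Writing $A_u=\{t\in\mathbb{R}:u+te\in A\}$ for $u\in e^\perp$, and similarly $B_u,C_u$, Fubini gives $I(A,B,C)=\int_{e^\perp}\int_{e^\perp}I_1(A_{y'},B_{x'-y'},C_{x'})\,dx'\,dy'$ with $I_1$ the one-dimensional functional, the analogous formula holding for $(S_eA,S_eB,S_eC)$ with each section replaced by the centered interval of equal length; by one-dimensional Riesz the integrand can only increase, so equality forces equality in one-dimensional Riesz for a.e.\ triple $(A_{y'},B_{x'-y'},C_{x'})$. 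The base case is the one-dimensional equality analysis: if the radii satisfy the strict triangle inequality, equality in $I_1$ forces all three sets to be intervals with the midpoint of the third equal to the sum of the midpoints of the other two (the complementary regimes degenerate to a Minkowski inclusion $U+V\subseteq W$ and its reflections). The hypothesis $|\alpha-\beta|<\gamma<\alpha+\beta$ keeps a.e.\ section triple in this generic regime, so for every $e$ and a.e.\ $u$ each of $A_u,B_u,C_u$ is an interval; by the standard fact that a measurable set whose section with a.e.\ line is an interval coincides up to a null set with a convex body, I may take $A,B,C$ to be convex bodies.

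For a convex body $K$ and direction $e$ let $m_K^e(u)$ denote the midpoint height of the chord $K\cap(u+\mathbb{R}e)$ over $u\in P_eK$. The section identity says $m_C^e(x')=m_A^e(y')+m_B^e(x'-y')$ for a.e.\ $x',y'$; this Cauchy-type equation forces $m_A^e,m_B^e,m_C^e$ to be affine with a common linear part, so the midpoint-of-chords loci of $A,B,C$ lie in parallel hyperplanes, for every $e$. By the classical characterization of ellipsoids (a convex body all of whose parallel-chord midpoint loci are planar is an ellipsoid), $A,B,C$ are ellipsoids, and parallelism in every direction forces a common shape: $A=a+\alpha'D$, $B=b+\beta'D$, $C=c+\gamma'D$ for a single centered ellipsoid $D$. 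Comparing volumes with the Schwarz symmetrals gives $\alpha'=\alpha,\beta'=\beta,\gamma'=\gamma$, and since the mid-chord locus of a centered ellipsoid passes through the center, the constant term of $m_{a+\alpha D}^e$ is a linear functional $\phi^e(a)$ of the translation alone; ``the constants add'' then reads $\phi^e(c)=\phi^e(a)+\phi^e(b)$ for all $e$, so $c=a+b$ since these functionals separate points. This is the asserted form.

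The main obstacle is the one-dimensional equality analysis together with its propagation: that equality in one-dimensional Riesz in the strict-triangle regime allows only intervals with additive midpoints is a genuinely combinatorial statement about overlaps of sets of reals, and the delicate part is the measure-theoretic bookkeeping upgrading ``equality for a.e.\ pair of base points'' to ``\emph{every} section is an interval with affine midpoint function'' --- in particular controlling the null set of base points whose section radii fall into a degenerate regime, which is exactly where the strict triangle hypothesis enters. The Steiner-symmetrization proof of Theorem~\ref{Rri} and the classical ellipsoid characterization by planar mid-chord loci I would quote rather than prove; both are standard, though the latter must be invoked with care for $n\ge2$ (for $n=1$ it is vacuous, the interval conclusion being all that is needed).
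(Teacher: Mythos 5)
The paper offers no proof of this statement: it is quoted verbatim as Burchard's equality-case theorem from \cite{Burchard96}, so there is nothing internal to compare your argument against. Judged on its own, your ``if'' direction (affine covariance of the trilinear functional, with $D$ normalized so that $|D|=\omega_n$) is correct and is the standard computation. The ``only if'' direction, however, contains a gap that you flag but do not actually resolve, and the resolution you suggest does not work.

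The problematic step is the claim that ``the hypothesis $|\alpha-\beta|<\gamma<\alpha+\beta$ keeps a.e.\ section triple in this generic regime.'' The numbers $\alpha,\beta,\gamma$ are the radii of the $n$-dimensional Schwarz symmetrals of $A,B,C$; they exert no pointwise control on the lengths of the one-dimensional sections $A_{y'},B_{x'-y'},C_{x'}$. Near the boundary of the projection $P_eC$ onto $e^\perp$ the section $C_{x'}$ has length tending to $0$, while the sections of $A$ and $B$ remain bounded below on sets of positive measure; so the section-level triangle inequality $|\,|A_{y'}|-|B_{x'-y'}|\,|<|C_{x'}|<|A_{y'}|+|B_{x'-y'}|$ fails on a set of $(x',y')$ of positive measure for essentially every direction $e$. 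Consequently ``equality in $I_1$ a.e.'' does not, by your route, yield that a.e.\ section of each set is an interval; a positive-measure portion of sections falls into the degenerate regime where the 1D equality case is much weaker (a Minkowski inclusion rather than an interval-plus-additive-midpoint constraint). This is precisely the obstruction you name as ``the main obstacle,'' but invoking the macroscopic strict triangle hypothesis does not remove it. In Burchard's actual argument the strict triangle inequality on $\alpha,\beta,\gamma$ enters at the global level, to exclude the trivial equality configurations such as $A+B\subseteq C$ (up to measure zero); the passage from a.e.\ sectional equality to the ellipsoid conclusion requires substantially more machinery than the Steiner/1D reduction plus the Bertrand--Brunn mid-chord characterization of ellipsoids, and occupies the bulk of her paper. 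So the overall strategy you sketch is in the right spirit, but as written the reduction to the generic 1D regime does not go through, and the proof is incomplete at exactly the point you identified.
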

\begin{thm} \label{2.1} 
(Burchard). Let $f,g,k:\mathbb{R}^n\to \mathbb{R}$ non-negative, non-zero, measurable functions with superlevel sets of finite measure such that
\begin{eqnarray*}
\int_{\mathbb{R}^n}\int_{\mathbb{R}^n}f(x)k(x-y)g(y)dxdy<\infty. 
\end{eqnarray*}
If at least two of the Schwarz symmetrals $f^{\star }$, $g^{\star}$, $k^{\star}$ are strictly symmetric decreasing, then there is equality
\begin{eqnarray*}
\int_{\mathbb{R}^n}\int_{\mathbb{R}^n}f(x)k(x-y)g(y)dxdy\leq \int_{\mathbb{R}^n}\int_{\mathbb{R}^n}f^{\star }(x)k^{\star}(x-y)g^{\star}(y)dxdy
\end{eqnarray*}
if and only if there is a volume-preserving $\phi\in GL(n)$ and $a$, $b$, $c \in \mathbb{R}^n$ with $c = a+b$ such that
\begin{eqnarray*}
f(x)=f^{\star }(\phi^{-1}x-a), k(x)=k^{\star}(\phi^{-1}x-b), g(x)=g^{\star}(\phi^{-1}x-c) 
\end{eqnarray*}
for $x \in \mathbb{R}^n$.
\end{thm}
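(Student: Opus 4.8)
The inequality itself is Theorem~\ref{Rri}, so the entire content lies in the equality characterization. The ``if'' direction I would dispatch by a change of variables: substituting $x\mapsto\phi x$, $y\mapsto\phi y$ and then translating by $a$, $b$, $c$, the identity $c=a+b$ is exactly what makes the three shifts compatible inside $f(x)k(x-y)g(y)$, and $|\det\phi|=1$ preserves the integral, so the two sides agree. The substance is the converse, and the plan is to reduce it, via the layer-cake formula, to the indicator-function equality case recorded in Theorem~\ref{2.1Burchard}.

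Assume equality holds. Up to relabeling---permissible since the trilinear form $(f,k,g)\mapsto\int_{\mathbb{R}^n}\int_{\mathbb{R}^n}f(x)k(x-y)g(y)\,dx\,dy$ is symmetric in its three arguments up to reflections through the origin, and those leave all Schwarz symmetrals unchanged---assume $f^{\star}$ and $g^{\star}$ are the two strictly symmetric decreasing symmetrals. Writing $f=\int_0^\infty 1_{\{f>s\}}\,ds$ and likewise for $k$ and $g$, and using the finiteness of the integral to apply Fubini, I would expand
\[
\int_{\mathbb{R}^n}\!\int_{\mathbb{R}^n}\!f(x)k(x-y)g(y)\,dx\,dy=\int_0^\infty\!\!\int_0^\infty\!\!\int_0^\infty I\big(\{f>s\},\{k>t\},\{g>u\}\big)\,ds\,dt\,du,
\]
where $I(A,B,C)=\int\!\int 1_A(x)1_B(x-y)1_C(y)\,dx\,dy$, and the same identity with every set replaced by its Schwarz symmetral. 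Since $\{f>s\}^{\star}=\{f^{\star}>s\}$, and likewise for $k$ and $g$, Theorem~\ref{Rri} applied at each level gives $I\le I^{\star}$ pointwise, so equality of the iterated integrals forces $I(\{f>s\},\{k>t\},\{g>u\})=I(\{f>s\}^{\star},\{k>t\}^{\star},\{g>u\}^{\star})$ for almost every triple of levels.

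Let $r_f(s),r_k(t),r_g(u)$ denote the radii of the three Schwarz symmetrals. Theorem~\ref{2.1Burchard} pins down these levelwise equality cases precisely when the strict triangle inequality $|r_f(s)-r_k(t)|<r_g(u)<r_f(s)+r_k(t)$ holds; on that ``nondegenerate'' set of levels it provides (up to null sets) a centered ellipsoid $D_{s,t,u}$ with $\{f>s\}=a_{s,t,u}+r_f(s)D_{s,t,u}$, $\{k>t\}=b_{s,t,u}+r_k(t)D_{s,t,u}$, $\{g>u\}=c_{s,t,u}+r_g(u)D_{s,t,u}$, together with the linear relation $c_{s,t,u}=a_{s,t,u}+b_{s,t,u}$ encoded by $c=a+b$ in Theorem~\ref{2.1Burchard}. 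The gluing step is bookkeeping: two nondegenerate triples sharing their $k$- and $g$-levels give two representations of $\{g>u\}$ and of $\{k>t\}$ as translates of a centered ellipsoid of fixed volume, which forces the ellipsoid and these two centers to coincide for both triples; the linear relation then forces the centers of the two relevant sets $\{f>s\}$ to coincide as well. Propagating this along chains of overlapping nondegenerate triples produces one centered ellipsoid $D$, of volume $\omega_n$ (since Schwarz symmetrization preserves measure), and one triple $a,b,c$ with $c=a+b$, such that $\{f>s\}=a+r_f(s)D$, $\{k>t\}=b+r_k(t)D$, $\{g>u\}=c+r_g(u)D$ for a.e.\ $s,t,u$. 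Picking $\phi\in GL(n)$ with $|\det\phi|=1$ and $\phi B^n=D$, the equivalence $x\in\{f>s\}\iff\phi^{-1}x-\phi^{-1}a\in\{f^{\star}>s\}$ (and its analogues for $k,g$) yields $f(x)=f^{\star}(\phi^{-1}x-\phi^{-1}a)$, $k(x)=k^{\star}(\phi^{-1}x-\phi^{-1}b)$, $g(x)=g^{\star}(\phi^{-1}x-\phi^{-1}c)$ for a.e.\ $x$, and renaming $\phi^{-1}a,\phi^{-1}b,\phi^{-1}c$ finishes the argument.

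I expect the main obstacle to be the control of the nondegenerate locus: Theorem~\ref{2.1Burchard} carries structural information only where the three radii satisfy a strict triangle inequality, so one must verify that this set of levels is both large enough to constrain almost every level and sufficiently connected that the chaining argument reaches almost every pair of levels. This is precisely the point at which the hypothesis of \emph{two} strictly symmetric decreasing symmetrals is used: strict monotonicity and continuity of $r_f$ and $r_g$ keep the triangle inequality from degenerating and permit one to move continuously between levels, whereas with a single strictly symmetric decreasing symmetral the degenerate levels can obstruct the propagation.
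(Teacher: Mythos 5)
The paper does not prove this theorem; it is cited directly as a result of Burchard (1996), essentially Theorem~2 of that paper, so there is no in-paper argument to compare against. Your proposed route, however, is precisely the one Burchard follows in the cited source: change of variables for the ``if'' direction, layer-cake decomposition together with Fubini and the levelwise Riesz inequality to reduce the ``only if'' direction to the indicator case, Theorem~\ref{2.1Burchard} on the nondegenerate level triples, and then a gluing argument. The WLOG relabeling of which two symmetrals are strictly decreasing is also fine, since the trilinear form is symmetric under permutations up to reflection and reflection leaves Schwarz symmetrals unchanged. So you have identified the correct strategy.

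The genuine gap is exactly the one you flag in your final paragraph and then leave open. Once Theorem~\ref{2.1Burchard} has been applied on the nondegenerate locus $\bigl\{(s,t,u):|r_f(s)-r_k(t)|<r_g(u)<r_f(s)+r_k(t)\bigr\}$, you still need (i) that this set is large enough that the per-triple data constrain $\{f>s\}$, $\{k>t\}$, $\{g>u\}$ for a.e.\ $s,t,u$, and (ii) that it is sufficiently connected that overlapping triples can be chained to produce a \emph{single} centered ellipsoid $D$ and vectors $a,b,c$ with $c=a+b$, rather than data depending on the chosen triple. You assert that strict monotonicity and continuity of $r_f$ and $r_g$ (i.e.\ two strictly symmetric decreasing symmetrals) make the chaining go through and keep the triangle inequality from degenerating, and this is indeed the right intuition, but the argument is not carried out --- $r_k$ may jump or plateau, and you must show the chaining nevertheless reaches a.e.\ level of each function. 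That is precisely where the real work and the real use of the hypothesis lie. A smaller issue: the levelwise equality obtained from Fubini holds only for a.e.\ $(s,t,u)$, and Theorem~\ref{2.1Burchard} identifies the level sets only up to null sets, so the final identities $f=f^{\star}(\phi^{-1}\cdot-a)$ etc.\ hold only a.e.\ in $x$, not pointwise as written; you should either note this or explain how representatives are chosen.
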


\section{The star-shaped set ${S}_{\alpha }(f,h)$}\label{s5}
Let $f, h: \mathbb{R}^n \to [0, \infty)$ be measurable functions, $K \subseteq \mathbb{R}^n$ be a star-shaped set with a measurable radial function and $\alpha > 0$. Anisotropic fractional Sobolev norms were initially introduced in \cite{M. Ludwig14,M. Ludwig2014} and subsequently utilized in \cite{JM22}. In this work, we define
\begin{eqnarray*}
	\int_{\mathbb{R}^n}\int_{\mathbb{R}^n}\frac{f(x)h(y)}{{\left \| x-y\right \|}_{K }^{n-\alpha}}dxdy,
\end{eqnarray*}
an anisotropic version of the functional appearing in (\ref{1a}). By Fubini’s theorem, polar coordinates, and (\ref{1c}), we get that 
\begin{eqnarray*}
\int_{\mathbb{R}^n}\int_{\mathbb{R}^n}\frac{f(x)h(y)}{{\left \| x-y\right\|}_{K}^{n-\alpha}}dxdy&=&\int_{\mathbb{R}^n}\int_{\mathbb{R}^n}\frac{f(y+z)h(y)}{{\left\|z\right\|}_{K }^{n-\alpha}}dzdy\nonumber\\
&=&\int_{\mathbb{R}^n}\int_{\mathbb{S}^{n-1}}\int_{0}^{\infty }\frac{f(y+t\xi )h(y){t}^{n-1}}{{\left \| t\xi\right \|}_{K }^{n-\alpha}}dtd\xi dy\nonumber\\
&=&\int_{\mathbb{S}^{n-1}}{{\rho}_{K} (\xi)}^{n-\alpha}\int_{0}^{\infty }{t }^{\alpha-1}\int_{\mathbb{R}^n}f(y+t\xi )h(y)dydtd\xi\nonumber\\
&=&\int_{\mathbb{S}^{n-1}}{{\rho}_{K} (\xi)}^{n-\alpha}{{\rho}_{{S}_{\alpha }(f,h)} (\xi)}^{\alpha}d\xi.
\end{eqnarray*}
Then, for star-shaped $K\subseteq \mathbb{R}^n$ with measurable radial function and measurable functions $f,h:\mathbb{R}^n\to [0,\infty )$, we have
\begin{eqnarray}\label{3a}
\int_{\mathbb{R}^n}\int_{\mathbb{R}^n}\frac{f(x)h(y)}{{\left \| x-y\right \|}_{K }^{n-\alpha}}dxdy=n\widetilde{V}_{\alpha} (K,{S}_{\alpha }(f,h)).
\end{eqnarray}
For measurable $f,h:\mathbb{R}^n\to [0,\infty )$, by (\ref{e2.1}), (\ref{1c}), polar coordinates and Fubini’s theorem, we have
\begin{eqnarray}\label{3b}
|{S}_{n}(f,h)|&=&\frac{1}{n}\int_{\mathbb{S}^{n-1}}{{\rho}_{{S}_{n}(f,h)} (\xi)}^{n}d\xi\nonumber\\&=&\frac{1}{n}\int_{\mathbb{S}^{n-1}}\int_{0}^{\infty }{t}^{n-1}\int_{\mathbb{R}^n}f(x+t\xi )h(x)dxdtd\xi\nonumber\\
&=&\frac{1}{n}\int_{\mathbb{R}^n}\int_{\mathbb{R}^n}f(x+y )h(x)dxdy\nonumber\\
&=&\frac{1}{n}{||f||}_{1}{||h||}_{1}.
\end{eqnarray}
\begin{lem}\cite[Corollary 4.2]{GZ98}\label{3.11}
	Let $g$ be a nonnegative, integrable, log-concave function on ${\mathbb{R}}^{n}$ and  $\alpha > 0$, then the function $\rho$ defined by
	\begin{equation*}
		\rho(\xi)={\left(\int_{0}^{\infty }g(t\xi){t }^{\alpha-1}dt \right)}^{1/\alpha },
	\end{equation*}
	for $\xi \in {\mathbb{S}}^{n-1}$, is the extended radial function of a convex body in ${\mathbb{R}}^{n}$.
\end{lem}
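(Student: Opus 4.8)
The plan is to turn the conclusion into a convexity statement and then to invoke a one‑dimensional Borell--Brascamp--Lieb type inequality. Write $\Phi(x):=\int_{0}^{\infty}g(tx)\,t^{\alpha-1}\,dt$ for $x\in\mathbb R^{n}\setminus\{0\}$, so that $\Phi$ is positively homogeneous of degree $-\alpha$ and $\rho(\xi)=\Phi(\xi)^{1/\alpha}$ on $\mathbb S^{n-1}$. Since $g$ is nonnegative, integrable and log‑concave, $g$ tends to $0$ along every ray while remaining locally bounded, whence $0<\Phi(x)<\infty$ for every $x\neq 0$; thus $N:=\Phi^{-1/\alpha}$ is a positive, continuous, positively $1$‑homogeneous function, namely the gauge of the star body $K$ with radial function $\rho$. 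A star body is a convex body precisely when its gauge is convex, so it suffices to show that $N$ is convex. Because $N$ is convex iff $N|_{E}$ is convex for every two‑dimensional linear subspace $E$ (every line through a point lies in one), and $N|_{E}$ is the corresponding gauge built from the log‑concave function $g|_{E}$, I may assume $n=2$.

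Fix $x_{0},x_{1}\in\mathbb R^{2}\setminus\{0\}$ and $\lambda\in(0,1)$, set $x_{\lambda}=(1-\lambda)x_{0}+\lambda x_{1}$, and reduce to the single inequality $\Phi(x_{\lambda})^{-1/\alpha}\le(1-\lambda)\Phi(x_{0})^{-1/\alpha}+\lambda\Phi(x_{1})^{-1/\alpha}$. The substitution $t\mapsto 1/t$ converts the scaling of rays into an honest arithmetic mean: $\Phi(x)=\int_{0}^{\infty}g(x/u)\,u^{-\alpha-1}\,du$. Given $u_{0},u_{1}>0$ put $v=(1-\lambda)u_{0}+\lambda u_{1}$, $\theta_{0}=(1-\lambda)u_{0}/v$, $\theta_{1}=\lambda u_{1}/v$; then $\theta_{0}+\theta_{1}=1$ and $x_{\lambda}/v=\theta_{0}(x_{0}/u_{0})+\theta_{1}(x_{1}/u_{1})$, so log‑concavity of $g$ gives $g(x_{\lambda}/v)\ge g(x_{0}/u_{0})^{\theta_{0}}g(x_{1}/u_{1})^{\theta_{1}}$, while convexity of $u\mapsto u\log u$ gives $v\le u_{0}^{\theta_{0}}u_{1}^{\theta_{1}}$ and hence $v^{-\alpha-1}\ge (u_{0}^{-\alpha-1})^{\theta_{0}}(u_{1}^{-\alpha-1})^{\theta_{1}}$. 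Multiplying, the functions $F_{i}(u):=g(x_{i}/u)u^{-\alpha-1}$ and $F_{\lambda}(u):=g(x_{\lambda}/u)u^{-\alpha-1}$ satisfy
\[
F_{\lambda}\bigl((1-\lambda)u_{0}+\lambda u_{1}\bigr)\ \ge\ F_{0}(u_{0})^{\theta_{0}}F_{1}(u_{1})^{\theta_{1}}\qquad(u_{0},u_{1}>0),
\]
with $\int_{0}^{\infty}F_{i}=\Phi(x_{i})$ and $\int_{0}^{\infty}F_{\lambda}=\Phi(x_{\lambda})$. Since $\theta_{0}+\theta_{1}=1$ this forces $\{F_{\lambda}\ge s\}\supseteq(1-\lambda)\{F_{0}\ge s\}+\lambda\{F_{1}\ge s\}$ for each $s>0$; running this through the sharp one‑dimensional Borell--Brascamp--Lieb inequality adapted to the weight $u^{-\alpha-1}$ (the case of an affine power weight, treated by Dancs and Uhrin) produces exactly $\Phi(x_{\lambda})^{-1/\alpha}\le(1-\lambda)\Phi(x_{0})^{-1/\alpha}+\lambda\Phi(x_{1})^{-1/\alpha}$, the exponent $-1/\alpha$ being precisely what that weight yields.

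The delicate part---and what I expect to be the main obstacle---is the last step. Because the exponents $\theta_{0},\theta_{1}$ depend on $u_{0},u_{1}$, one cannot simply integrate the superlevel‑set inclusion: the crude estimate gives a power‑mean inequality with the wrong sign of the exponent, and to recover the $(-1/\alpha)$‑power mean one must carry out the normalization argument underlying the one‑dimensional Borell--Brascamp--Lieb inequality, tracking $\sup F_{0}$ and $\sup F_{1}$. This computation is exactly the one performed by Gardner and Zhang; the statement is their Corollary~4.2 in \cite{GZ98}, which I would invoke rather than reproduce.
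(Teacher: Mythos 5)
The paper does not give a proof of this lemma: it simply cites \cite[Corollary 4.2]{GZ98} (a result of Gardner--Zhang, essentially K.~Ball's theorem that a log-concave function generates a convex body via its one-dimensional fractional moments), and your proposal ends by doing exactly the same thing, so at the level of what is actually invoked the two approaches coincide. The surrounding sketch is structurally sound: the reduction to proving convexity of $N=\Phi^{-1/\alpha}$, the further reduction to two-dimensional subspaces, the substitution $t\mapsto 1/u$, and the observation that log-concavity together with convexity of $u\mapsto u\log u$ gives a Pr\'ekopa--Leindler-type estimate with \emph{variable} exponents $\theta_0,\theta_1$ depending on $(u_0,u_1)$, are all correct and are indeed the ingredients of Ball's argument. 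You also correctly diagnose that one cannot just integrate the superlevel-set inclusion $\{F_\lambda\ge s\}\supseteq(1-\lambda)\{F_0\ge s\}+\lambda\{F_1\ge s\}$ over $s$ --- this only bites for $s\le\min(\sup F_0,\sup F_1)$, and integrating it naively would (falsely) yield concavity of $\Phi$ along rays --- so a normalization tracking $\sup F_i$ (the Dancs--Uhrin / Ball argument) is genuinely needed, and is exactly what Gardner--Zhang supply. Two small caveats: the assertion that $0<\Phi(x)<\infty$ for \emph{every} $x\ne 0$ can fail if the support of $g$ misses some rays through the origin (hence the qualifier ``extended'' radial function in the statement); and the phrase ``power-mean inequality with the wrong sign of the exponent'' somewhat misdescribes the failure mode, which is the loss of information above $\min(\sup F_0,\sup F_1)$ rather than a sign issue. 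Neither affects the conclusion that your route, like the paper's, correctly rests on \cite[Corollary 4.2]{GZ98}.
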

We note the following property of ${S}_{\alpha }(f,h)$ for log-concave functions $f$ and $h$.
\begin{pro}
If $f,h:\mathbb{R}^n\to [0,\infty )$ are log-concave and in ${L}^{1} (\mathbb{R}^n)$, then ${S}_{\alpha }(f,h)$ is a convex body for every $\alpha >0$.
\end{pro}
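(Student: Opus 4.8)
The plan is to reduce the proposition to Lemma~\ref{3.11} by realizing $\rho^{\alpha}_{S_{\alpha}(f,h)}$ as the $\alpha$-th radial moment of a single nonnegative, integrable, log-concave function. Define the correlation function
\begin{eqnarray*}
g(z):=\int_{\mathbb{R}^n}h(x)\,f(x+z)\,dx,\qquad z\in\mathbb{R}^n,
\end{eqnarray*}
so that, reading off (\ref{1c}) directly (the order of integration already matches),
\begin{eqnarray*}
\rho^{\alpha}_{S_{\alpha}(f,h)}(\xi)=\int_{0}^{\infty}t^{\alpha-1}\,g(t\xi)\,dt,\qquad \xi\in\mathbb{R}^n\setminus\{0\}.
\end{eqnarray*}
Hence it suffices to verify that $g$ is nonnegative, integrable and log-concave; Lemma~\ref{3.11} then asserts exactly that $\xi\mapsto(\int_{0}^{\infty}g(t\xi)t^{\alpha-1}dt)^{1/\alpha}$, i.e.\ $\rho_{S_{\alpha}(f,h)}$, is the extended radial function of a convex body, which is the claim (assuming, as throughout, that $f$ and $h$ are not identically zero, since otherwise $S_{\alpha}(f,h)$ degenerates).

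Next I would identify $g$ with a convolution. Setting $h^{-}(u):=h(-u)$ and substituting $y=x+z$ yields
\begin{eqnarray*}
g(z)=\int_{\mathbb{R}^n}f(y)\,h(y-z)\,dy=\int_{\mathbb{R}^n}f(y)\,h^{-}(z-y)\,dy=(f\ast h^{-})(z).
\end{eqnarray*}
Nonnegativity of $g$ is immediate from $f,h\ge 0$. Integrability follows from Young's inequality (equivalently Fubini's theorem): since $f,h^{-}\in L^{1}(\mathbb{R}^n)$ we obtain $g\in L^{1}(\mathbb{R}^n)$ with $\|g\|_{1}=\|f\|_{1}\|h\|_{1}$; in particular $g$ is finite almost everywhere, and once log-concavity is known it is in fact finite everywhere, since an integrable log-concave function is bounded.

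It remains to establish log-concavity of $g$. Reflection about the origin preserves log-concavity, so $h^{-}$ is log-concave; since $f$ and $h^{-}$ are log-concave functions in $L^{1}(\mathbb{R}^n)$, Lemma~\ref{2aa} applied with $s=0$ (that is, the Pr\'ekopa--Leindler inequality) shows that the convolution $g=f\ast h^{-}$ is log-concave. Thus $g$ is a nonnegative, integrable, log-concave function on $\mathbb{R}^n$ that is not identically zero, so it satisfies the hypotheses of Lemma~\ref{3.11}; applying that lemma to $g$ completes the argument.

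As to the difficulty: there is no deep obstacle here, only the matter of spotting the correct reduction. The single substantive step is the log-concavity of the correlation $g$, and this is precisely where the $L^{1}$ assumption on $f$ and $h$ (beyond mere log-concavity) enters, through the limiting case $s=0$ of Lemma~\ref{2aa}; everything else is elementary bookkeeping and the final conclusion is a direct citation of Lemma~\ref{3.11}.
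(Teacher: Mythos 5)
Your proof is correct, and it reaches the same endpoint as the paper — showing that the correlation $g(y)=\int_{\mathbb{R}^n}f(x+y)h(x)\,dx$ is nonnegative, integrable and log-concave, then invoking Lemma~\ref{3.11} of Gardner--Zhang — but the substantive step is handled differently. The paper proves log-concavity of $g$ from scratch: it introduces $F(x,y)=f(x+y)h(x)$, verifies by hand that $F$ is log-concave on $\mathbb{R}^{2n}$, and then applies the Pr\'ekopa--Leindler inequality to the slices $F(\cdot,y_0),F(\cdot,y_1),F(\cdot,y_2)$ to conclude. You instead observe that $g=f\ast h^{-}$ with $h^{-}(u)=h(-u)$, note that reflection preserves both log-concavity and integrability, and cite Lemma~\ref{2aa} (the Borell--Brascamp--Lieb consequence, already stated in the paper) at $s=0$, which gives $s/(ns+2)=0$, i.e.\ log-concavity of the convolution. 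Your route is shorter and cleaner because it reuses a lemma the paper already records, rather than redoing the Pr\'ekopa--Leindler argument inline; the paper's version is more self-contained and makes the two-variable log-concavity structure explicit. Both are valid, and your caveat that $f,h$ should be non-zero (so that $g\not\equiv 0$ and $S_\alpha(f,h)$ is genuinely a convex body rather than degenerate) is a small point of care the paper leaves implicit.
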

\begin{proof}
Let $F(x,y)=f(x+y)h(x)$. Then $F(x,y)$ is log-concave on $(x,y)\in\mathbb{R}^{2n}$. In fact, since $f,h$ are log-concave and in ${L}^{1} (\mathbb{R}^n)$, we can see that
\begin{eqnarray}\label{3c}
F(x,y)
&=&f((1-\lambda){x}_{0}+\lambda{x}_{1}+(1-\lambda){y}_{0}+\lambda{y}_{1})h((1-\lambda){x}_{0}+\lambda{x}_{1})\nonumber\\
&\geq& {f({x}_{0}+{y}_{0})}^{1-\lambda }{f({x}_{1}+{y}_{1})}^{\lambda }{h({x}_{0})}^{1-\lambda }{h({x}_{1})}^{\lambda }\nonumber\\
&=&(f({x}_{0}+{y}_{0})h({x}_{0}))^{1-\lambda }(f({x}_{1}+{y}_{1})h({x}_{1}))^{\lambda }\nonumber\\
&=&F({x}_{0},{y}_{0})^{1-\lambda }F({x}_{1},{y}_{1})^{\lambda }
\end{eqnarray} 
where $x, y, {x}_{0}, {x}_{1}, {y}_{0}, {y}_{1}\in \mathbb{R}^n$, $x=(1-\lambda){x}_{0}+\lambda{x}_{1},y=(1-\lambda){y}_{0}+\lambda{y}_{1},0 \leq \lambda \leq 1$.

Let ${z}_{0}(x)=F(x,{y}_{0})$, ${z}_{1}(x)=F(x,{y}_{1})$, ${z}_{2}(x)=F(x,{y}_{2})$, ${y}_{2}=(1-\lambda){y}_{0}+\lambda{y}_{1}$, then by (\ref{3c}),
\begin{equation*}
{z}_{2}((1-\lambda){x}_{0}+\lambda{x}_{1})\geq{z}_{0}({x}_{0})^{1-\lambda }{z}_{1}({x}_{1})^{\lambda }.
\end{equation*} 
By Pr\'{e}kopa-Leindler inequality (see \cite[Theorem 4.2]{Gardner}), we have
\begin{eqnarray*}
\int_{\mathbb{R}^n}{z}_{2}(x)dx\geq \left(\int_{\mathbb{R}^n}z_{0}(x)dx \right)^{1-\lambda }\left(\int_{\mathbb{R}^n}z_{1}(x)dx\right )^{\lambda },
\end{eqnarray*} 
and thus
\begin{eqnarray*}
\int_{\mathbb{R}^n}f(x+(1-\lambda){y}_{0}+\lambda{y}_{1})h(x)dx\geq \left(\int_{\mathbb{R}^n}f(x+{y}_{0})h(x)dx\right)^{1-\lambda }\left(\int_{\mathbb{R}^n}f(x+{y}_{1})h(x)dx\right)^{\lambda }.
\end{eqnarray*}
Let $g(y)=\int_{\mathbb{R}^n}f(x+y)h(x)dx$, 
we have $g$ is log-concave.
By Lemma \ref{3.11}, we obtain the result.
\end{proof}

\section{Generalized affine HLS inequalities for $0<\alpha<n$}
The following result follows directly from the Riesz rearrangement inequality Theorem \ref{Rri} with case from Theorem \ref{2.1}.
\begin{lem}\label{4.1} Let $q > 0$ and let $K \subseteq \mathbb{R}^n$ be a star-shaped set with a measurable radial function and $|K| > 0$. Consider non-zero, measurable functions $f, h: \mathbb{R}^n \to [0, \infty)$ satisfying
	\begin{eqnarray*}
		\int_{\mathbb{R}^n}\int_{\mathbb{R}^n}\frac{f(x)h(y)}{{\left \| x-y\right \|}_{K }^{q}}dxdy<\infty 
	\end{eqnarray*}
	and $f^{\star}, h^{\star}$ are strictly symmetric decreasing rearrangements of $f$ and $h$, respectively. Then, equality holds in the inequality
	\begin{eqnarray*}
		\int_{\mathbb{R}^n}\int_{\mathbb{R}^n}\frac{f(x)h(y)}{{\left \| x-y\right \|}_{K }^{q}}dxdy \leq  \int_{\mathbb{R}^n}\int_{\mathbb{R}^n}\frac{f^{\star }(x)h^{\star }(y)}{{\left \| x-y\right \|}_{K ^{\star }}^{q}}dxdy
	\end{eqnarray*}
	if and only if $K$ is an ellipsoid and there exists a $\phi\in GL(n)$ such that $f$ is a translation of $f^{\star} \circ \phi^{-1}$ and $h$ is a translation of $h^{\star} \circ \phi^{-1}$. 
\end{lem}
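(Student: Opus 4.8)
The plan is to recognise the left-hand side as a Riesz rearrangement functional with kernel $k(z):=\|z\|_K^{-q}$ and then to invoke Theorems~\ref{Rri} and~\ref{2.1}. One may assume $|K|<\infty$; if $|K|=\infty$ then $K^\star=\mathbb{R}^n$, $\|\cdot\|_{K^\star}\equiv 0$, and the stated inequality is vacuous. First I would collect the properties of $k$. Since $\rho_K$ is measurable and $0<|K|<\infty$, the function $k\ge 0$ is measurable, and its superlevel set $\{k\ge t\}=\{z:\|z\|_K\le t^{-1/q}\}=t^{-1/q}K$ has finite measure for every $t>0$, so $k^\star$ is well defined. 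Using $(cK)^\star=cK^\star$ for $c>0$ together with the layer-cake formula,
\begin{eqnarray*}
k^\star(z)=\int_0^\infty 1_{(t^{-1/q}K)^\star}(z)\,dt=\int_0^\infty 1_{t^{-1/q}K^\star}(z)\,dt=\int_0^{\|z\|_{K^\star}^{-q}}dt=\|z\|_{K^\star}^{-q}.
\end{eqnarray*}
Hence, with $g:=h$, the displayed inequality of the lemma is exactly Riesz's rearrangement inequality (Theorem~\ref{Rri}) applied to $f,g,k$, because $f^\star,h^\star,k^\star$ are the Schwarz symmetrizations of $f,h,k$.

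For the equality case, observe that $f^\star$ and $h^\star$ are strictly symmetric decreasing, so at least two of $f^\star,h^\star,k^\star$ are; together with the finiteness of the double integral this lets me apply Theorem~\ref{2.1}, which, being an ``if and only if'', handles both directions simultaneously. It states that equality holds iff there exist a volume-preserving $\phi\in GL(n)$ and $a,b,c\in\mathbb{R}^n$ with $c=a+b$ such that $f(x)=f^\star(\phi^{-1}x-a)$, $k(x)=k^\star(\phi^{-1}x-b)$ and $h(x)=h^\star(\phi^{-1}x-c)$ for a.e.\ $x$. It then remains to decode the middle identity. Writing $R:=(|K|/\omega_n)^{1/n}$ for the radius of $K^\star$, so that $\|\cdot\|_{K^\star}=|\cdot|/R$, the middle identity reads $\|x\|_K=|\phi^{-1}x-b|/R$ a.e.; evaluating at $x=0$ and using $\|0\|_K=0$ (as $0\in K$) forces $b=0$, whence $c=a$ and $K=\{x:\|x\|_K\le 1\}=\phi(RB^n)$ is an ellipsoid centred at the origin. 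Conversely, any ellipsoid centred at the origin has $|K|=\omega_nR^n$ and can be written $K=\phi(RB^n)$ with $\phi$ volume-preserving, which recovers $k=k^\star\circ\phi^{-1}$. Plugging $b=0$, $c=a$ back in, the equality condition becomes: $K$ is an ellipsoid and $f(x)=f^\star(\phi^{-1}(x-\phi a))$, $h(x)=h^\star(\phi^{-1}(x-\phi a))$, i.e.\ $f$ and $h$ are translations of $f^\star\circ\phi^{-1}$ and $h^\star\circ\phi^{-1}$, as asserted.

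The one genuinely non-routine step is this last decoding of the kernel identity: one has to use that $\|\cdot\|_{K^\star}$ is a scalar multiple of the Euclidean norm in order to conclude that $b$ vanishes and that the $1$-sublevel set of $\|\cdot\|_K$ is a linear image of a ball, hence an ellipsoid. Everything else — the measurability and the explicit form of $k^\star$, the verification of the hypotheses of Theorems~\ref{Rri} and~\ref{2.1} (non-triviality, finite-measure superlevel sets, two strictly symmetric decreasing symmetrals), and the bookkeeping with the volume-preserving $\phi$ and the translation vector $\phi a$ — is straightforward.
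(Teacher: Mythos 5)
Your proposal is correct and takes essentially the same approach as the paper, which gives no argument beyond asserting that the lemma ``follows directly from the Riesz rearrangement inequality Theorem~\ref{Rri} with case from Theorem~\ref{2.1}''; you simply fill in the details (computing $k^{\star}=\|\cdot\|_{K^{\star}}^{-q}$ via the layer-cake formula and decoding Burchard's equality condition). The only slightly loose step is ``evaluating at $x=0$'' an identity that holds only a.e.; it is cleaner to use the $1$-homogeneity of $\|\cdot\|_K$, letting $\lambda\to 0^{+}$ in $\lambda\|x\|_K=|\lambda\phi^{-1}x-b|/R$ to force $b=0$, after which your conclusion stands.
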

The proof of Theorem \ref{1.1} relies on the following lemmas.
\begin{lem}\label{4.2}  Let $0 < \alpha < n$ and let $K \subseteq \mathbb{R}^n$ be a star-shaped set with a measurable radial function. Suppose $f, h: \mathbb{R}^n \to [0, \infty)$ are non-zero measurable functions satisfying
	\begin{eqnarray*}
		\int_{\mathbb{R}^n }\int_{\mathbb{R}^n} \frac{f(x)h(y)}{{\left \| x-y \right \|}_{K}^{n-\alpha}} dxdy < \infty.
		\end{eqnarray*}
		Then, the following inequality holds:
		\begin{eqnarray*}
			\widetilde{V}_{\alpha} (K, {S}_{\alpha}(f, h)) \leq \widetilde{V}_{\alpha} (K^{\star}, {S}_{\alpha}(f^{\star}, h^{\star})).
		\end{eqnarray*}
		Moreover, for $|K| > 0$ and $f^{\star },h^{\star }$ strictly symmetric decreasing, equality is achieved if and only if $K$ is an ellipsoid and there exists a $\phi\in GL(n)$ such that $f$ is a translation of $f^{\star} \circ \phi^{-1}$ and $h$ is a translation of $h^{\star} \circ \phi^{-1}$. 
\end{lem}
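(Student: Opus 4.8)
The plan is to pass from the asserted inequality between dual mixed volumes to the Riesz rearrangement inequality by means of the identity \eqref{3a}, and then to read off the equality case from Lemma \ref{4.1}. Set $q=n-\alpha$; since $0<\alpha<n$ we have $q>0$, so $k(z)={\|z\|}_{K}^{\alpha-n}={\|z\|}_{K}^{-q}$ is a non-negative measurable function on $\mathbb{R}^n$. Before invoking Riesz I would dispose of the degenerate values of $|K|$. If $|K|=0$, then $\rho_{K}=0$ a.e.\ on $\mathbb{S}^{n-1}$ by \eqref{e2.1}, so $\widetilde V_{\alpha}(K,S_{\alpha}(f,h))=0$, while $K^{\star}=\{0\}$ gives $\widetilde V_{\alpha}(K^{\star},S_{\alpha}(f^{\star},h^{\star}))=0$, and the inequality is trivial. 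If $|K|=\infty$, then $K^{\star}=\mathbb{R}^n$, so $\rho_{K^{\star}}\equiv\infty$; since $f,h$ are non-zero, $f^{\star},h^{\star}$ are non-zero symmetric decreasing functions, hence positive near the origin, so $\rho_{S_{\alpha}(f^{\star},h^{\star})}>0$ everywhere and the right-hand side is $+\infty$. So from now on $0<|K|<\infty$.

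The one genuinely non-routine computation is to identify the symmetric decreasing rearrangement of $k$. For $t>0$ the super-level set is $\{k\ge t\}=\{z:{\|z\|}_{K}\le t^{-1/q}\}=t^{-1/q}K$, of finite measure $t^{-n/q}|K|$; since the Schwarz symmetral of a set is the centered ball of the same volume, it commutes with dilations, so $\{k\ge t\}^{\star}=t^{-1/q}K^{\star}$. The layer cake formula \eqref{2c} then gives
\begin{eqnarray*}
k^{\star}(z)=\int_{0}^{\infty}1_{\{k\ge t\}^{\star}}(z)\,dt=\int_{0}^{\infty}1_{t^{-1/q}K^{\star}}(z)\,dt=\int_{0}^{{\|z\|}_{K^{\star}}^{-q}}dt={\|z\|}_{K^{\star}}^{-q}={\|z\|}_{K^{\star}}^{\alpha-n}.
\end{eqnarray*}
Now apply the Riesz rearrangement inequality (Theorem \ref{Rri}) to $f$, $h$ and the kernel $k$; its super-level sets have finite measure, and one may assume the same for $f$ and $h$, since otherwise $f^{\star}$ or $h^{\star}$ is bounded below by a positive constant and the right-hand side below is already $+\infty$. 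This yields
\begin{eqnarray*}
\int_{\mathbb{R}^n}\int_{\mathbb{R}^n}\frac{f(x)h(y)}{{\|x-y\|}_{K}^{n-\alpha}}\,dx\,dy\le\int_{\mathbb{R}^n}\int_{\mathbb{R}^n}\frac{f^{\star}(x)h^{\star}(y)}{{\|x-y\|}_{K^{\star}}^{n-\alpha}}\,dx\,dy,
\end{eqnarray*}
and \eqref{3a}, applied to $K$ on the left and to $K^{\star}$ on the right, identifies the two sides with $n\widetilde V_{\alpha}(K,S_{\alpha}(f,h))$ and $n\widetilde V_{\alpha}(K^{\star},S_{\alpha}(f^{\star},h^{\star}))$, which is the claimed inequality.

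For the equality statement, assume in addition $|K|>0$ (already arranged) and $f^{\star},h^{\star}$ strictly symmetric decreasing. By \eqref{3a}, equality in $\widetilde V_{\alpha}(K,S_{\alpha}(f,h))\le\widetilde V_{\alpha}(K^{\star},S_{\alpha}(f^{\star},h^{\star}))$ holds exactly when there is equality in the displayed double-integral inequality, i.e.\ in the Riesz inequality for $f$, $h$ and the kernel ${\|\cdot\|}_{K}^{-(n-\alpha)}$. Since $n-\alpha>0$, Lemma \ref{4.1} applies with $q=n-\alpha$ and shows that this occurs if and only if $K$ is an ellipsoid and there is $\phi\in GL(n)$ such that $f$ is a translation of $f^{\star}\circ\phi^{-1}$ and $h$ is a translation of $h^{\star}\circ\phi^{-1}$, which is exactly the asserted characterization. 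The step I expect to take the most care is the identification $k^{\star}={\|\cdot\|}_{K^{\star}}^{\alpha-n}$ together with the attendant bookkeeping (finiteness of super-level sets so that Theorem \ref{Rri} and Lemma \ref{4.1} genuinely apply, and non-degeneracy of $K$ and of $S_{\alpha}(f^{\star},h^{\star})$); once that is in hand, everything else is a direct passage through \eqref{3a}.
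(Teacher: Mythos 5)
Your argument follows exactly the paper's route: translate via \eqref{3a} from dual mixed volumes to the double integral, apply the Riesz rearrangement inequality, and invoke Lemma~\ref{4.1} for the equality characterization. The only difference is that you supply the verification $({\|\cdot\|}_{K}^{\alpha-n})^{\star}={\|\cdot\|}_{K^{\star}}^{\alpha-n}$ via the layer cake formula and dispose of the degenerate cases $|K|\in\{0,\infty\}$, details the paper leaves implicit, so the proposal is correct and essentially identical in approach.
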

\begin{proof}
By (\ref{3a}) and the Riesz rearrangement inequality, Theorem \ref{Rri}, we obtain
\begin{eqnarray*}
	\widetilde{V}_{\alpha} (K,{S}_{\alpha }(f,h))
	&=&\frac{1}{n}\int_{\mathbb{R}^n}\int_{\mathbb{R}^n}\frac{f(x)h(y)}{{\left \| x-y\right \|}_{K }^{n-\alpha}}dxdy\nonumber\\
	&\leq&  \frac{1}{n}\int_{\mathbb{R}^n}\int_{\mathbb{R}^n}\frac{f(x)^{\star }h(y)^{\star }}{{\left \| x-y\right \|}_{K ^{\star }}^{n-\alpha}}dxdy= \widetilde{V}_{\alpha} (K^{\star },{S}_{\alpha }(f^{\star },h^{\star })).
\end{eqnarray*}
According to Lemma \ref{4.1}, equality holds if and only if $K$ is an ellipsoid and there exists a volume-preserving $\phi\in GL(n)$ such that $f$ is a translation of $f^{\star} \circ \phi^{-1}$ and $h$ is a translation of $h^{\star} \circ \phi^{-1}$.
\end{proof}
\begin{lem}\label{l4.3} Let $n\ge 1$, $1<p,r<\infty $, and $0<\alpha <n$ satisfy
	\begin{eqnarray}
		\frac{1}{p}+\frac{1}{r}-\frac{\alpha }{n}=1.\nonumber   
	\end{eqnarray}
	For all non-negative functions $f\in L^{p}\left ( \mathbb{R}^n  \right ) $ and $h\in L^{r}\left ( \mathbb{R}^n  \right ) $,
	\begin{eqnarray*}
		\left | S_{\alpha }\left ( f,h \right )   \right |\le  \left | S_{\alpha }\left ( f^{\star } ,h^{\star }  \right )   \right |.
	\end{eqnarray*}
	For $f^{\star },h^{\star }$ strictly symmetric decreasing with $| {S}_{\alpha }(f^{\star },h^{\star })| < \infty $, equality holds if and only if there exists a $\phi\in GL(n)$ such that $f$ and $h$ are translates  of $f^{\star }\circ \phi ^{-1}$ and $h^{\star }\circ \phi ^{-1}$, respectively.
\end{lem}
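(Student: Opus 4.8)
The plan is to deduce Lemma \ref{l4.3} from Lemma \ref{4.2} by choosing the star-shaped set $K$ appropriately, so that the dual mixed volume $\widetilde V_\alpha(K,S_\alpha(f,h))$ collapses to a power of $|S_\alpha(f,h)|$. The natural choice is $K = S_\alpha(f,h)$ itself (or a ball), which forces the left-hand side of the dual mixed volume inequality to become a volume. Concretely, by the definition \eqref{2.1a} of $\widetilde V_\alpha$ and the identity $\widetilde V_\alpha(L,L)=|L|$, one has $\widetilde V_\alpha(S_\alpha(f,h),S_\alpha(f,h)) = |S_\alpha(f,h)|$. The only subtlety is that Lemma \ref{4.2} is stated for a single star-shaped set $K$ whose Schwarz symmetral $K^\star$ appears on the right, whereas here I want $K=S_\alpha(f,h)$ on the left but $S_\alpha(f^\star,h^\star)$ (not $S_\alpha(f,h)^\star$) on the right. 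So I cannot apply Lemma \ref{4.2} verbatim; instead I go back one step to its proof, i.e.\ directly to \eqref{3a} and the Riesz rearrangement inequality.

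First I would write, using \eqref{3a} with $K = S_\alpha(f,h)$ and $q=n-\alpha$,
\begin{eqnarray*}
n\,|S_\alpha(f,h)| = n\,\widetilde V_\alpha\bigl(S_\alpha(f,h),S_\alpha(f,h)\bigr)
= \int_{\mathbb{R}^n}\int_{\mathbb{R}^n}\frac{f(x)h(y)}{\|x-y\|_{S_\alpha(f,h)}^{\,n-\alpha}}\,dx\,dy.
\end{eqnarray*}
Then apply the Riesz rearrangement inequality (Theorem \ref{Rri}) to the kernel $k(z)=\|z\|_{S_\alpha(f,h)}^{-(n-\alpha)}$, whose symmetric decreasing rearrangement is $k^\star(z)=\|z\|_{S_\alpha(f,h)^\star}^{-(n-\alpha)} = c\,|z|^{-(n-\alpha)}$ for the appropriate constant $c$ determined by $|S_\alpha(f,h)|$. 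Since $n-\alpha>0$, $k$ is indeed a nonnegative function with superlevel sets of finite measure, so Riesz applies and gives
\begin{eqnarray*}
n\,|S_\alpha(f,h)| \le \int_{\mathbb{R}^n}\int_{\mathbb{R}^n}\frac{f^\star(x)h^\star(y)}{\|x-y\|_{S_\alpha(f,h)^\star}^{\,n-\alpha}}\,dx\,dy
= n\,\widetilde V_\alpha\bigl(S_\alpha(f,h)^\star, S_\alpha(f^\star,h^\star)\bigr),
\end{eqnarray*}
again by \eqref{3a}. Now $S_\alpha(f,h)^\star$ is a centered ball with $|S_\alpha(f,h)^\star| = |S_\alpha(f,h)|$, and $S_\alpha(f^\star,h^\star)$ is also a centered ball (it is rotation-invariant by the radial symmetry of $f^\star,h^\star$, and by \eqref{1c} its radial function is constant). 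For two centered balls the dual mixed volume inequality \eqref{2a} applies with \emph{equality direction known}, but more simply: $\widetilde V_\alpha(B_1,B_2) = |B_1|^{(n-\alpha)/n}|B_2|^{\alpha/n}$ exactly when both are balls. Hence
\begin{eqnarray*}
n\,|S_\alpha(f,h)| \le n\,|S_\alpha(f,h)|^{\frac{n-\alpha}{n}}\,|S_\alpha(f^\star,h^\star)|^{\frac{\alpha}{n}},
\end{eqnarray*}
and dividing by $n\,|S_\alpha(f,h)|^{(n-\alpha)/n}$ (the case $|S_\alpha(f,h)|=0$ being trivial, and the case $|S_\alpha(f,h)|=\infty$ handled separately, see below) yields $|S_\alpha(f,h)|^{\alpha/n} \le |S_\alpha(f^\star,h^\star)|^{\alpha/n}$, i.e.\ the claimed inequality.

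For the equality characterization, I would invoke Lemma \ref{4.1} (equivalently Theorem \ref{2.1}) with $K = S_\alpha(f,h)$ and $q = n-\alpha$. Assuming $|S_\alpha(f^\star,h^\star)|<\infty$ and $f^\star,h^\star$ strictly symmetric decreasing, equality in the Riesz step forces $S_\alpha(f,h)$ to be an ellipsoid and $f,h$ to be translates of $f^\star\circ\phi^{-1}$, $h^\star\circ\phi^{-1}$ for some $\phi\in GL(n)$; conversely this clearly gives equality since then $S_\alpha(f,h) = \phi\,S_\alpha(f^\star,h^\star)$ by the covariance relation stated in the introduction, and $\phi$ can be taken volume-preserving after absorbing a dilation. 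The main technical obstacle, and the one point deserving care, is the finiteness bookkeeping: to invoke Riesz's inequality and Lemma \ref{4.1} one needs the double integral to be finite, and one must argue that under the hypotheses $f\in L^p$, $h\in L^r$ with $1/p+1/r-\alpha/n=1$, the quantity $n|S_\alpha(f,h)| = \widetilde V_\alpha(S_\alpha(f,h),S_\alpha(f,h))$ being possibly infinite is precisely the degenerate case, which is consistent with the stated inequality (if the left side is infinite then by the inequality chain so is the right side, and there is nothing to prove about equality). I would handle this by first treating the case $|S_\alpha(f,h)|<\infty$ via the argument above, and then noting that $|S_\alpha(f,h)|\le|S_\alpha(f^\star,h^\star)|$ holds trivially when the left side is zero or the right side is infinite, so only the finite–finite regime requires the Riesz machinery and the equality analysis.
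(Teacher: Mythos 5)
Your core argument for the finite case is correct and is essentially the paper's: you unroll Lemma~\ref{4.2} by going directly to \eqref{3a} and Riesz with $K=S_\alpha(f,h)$, note that $S_\alpha(f,h)^\star$ and $S_\alpha(f^\star,h^\star)$ are both centered balls so the dual mixed volume evaluates exactly to $|S_\alpha(f,h)|^{(n-\alpha)/n}|S_\alpha(f^\star,h^\star)|^{\alpha/n}$, and then cancel. The equality analysis via Lemma~\ref{4.1} matches the paper as well. The two presentations differ only in whether Lemma~\ref{4.2} is cited or re-derived.

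The genuine gap is in the case $|S_\alpha(f,h)|=\infty$. You defer it to a remark that the inequality ``holds trivially when \dots\ the right side is infinite,'' leaving only the finite--finite regime. But that case split omits exactly the dangerous configuration: $|S_\alpha(f,h)|=\infty$ while $|S_\alpha(f^\star,h^\star)|<\infty$. In that situation your Riesz step collapses, because the kernel $k(z)=\|z\|_{S_\alpha(f,h)}^{-(n-\alpha)}$ then has superlevel sets $t^{-1/(n-\alpha)}S_\alpha(f,h)$ of \emph{infinite} measure, so Theorem~\ref{Rri} is not applicable and there is no inequality chain to invoke; saying ``if the left side is infinite then by the inequality chain so is the right side'' is circular, since the inequality is precisely what is being proved. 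The paper closes this hole with a truncation argument: set $f_k=f\,1_{kB^n}$, $h_k=h\,1_{kB^n}$, observe that $S_\alpha(f_k,h_k)$ has finite volume (compact supports), apply the finite case to get $|S_\alpha(f_k,h_k)|\le|S_\alpha(f_k^\star,h_k^\star)|\le|S_\alpha(f^\star,h^\star)|$, and pass to the limit by two applications of monotone convergence to conclude $|S_\alpha(f^\star,h^\star)|=\infty$. Your proof needs this (or an equivalent) argument to be complete.

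A smaller point: even in the finite case, to invoke Lemma~\ref{4.1}/Theorem~\ref{2.1} for the equality characterization you need $|K|=|S_\alpha(f,h)|>0$, so you should also note that if $|S_\alpha(f,h)|=0$ then equality in $0\le|S_\alpha(f^\star,h^\star)|$ forces $|S_\alpha(f^\star,h^\star)|=0$, which is excluded by the strict-decrease hypothesis on $f^\star,h^\star$ (as non-zero functions), so this degenerate case does not intersect the equality discussion.
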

\begin{proof}
	First, suppose that $| {S}_{\alpha }(f,h)| < \infty $. Let $K = {S}_{\alpha }(f,h)$ in Lemma \ref{4.2} and by the dual mixed volume inequality (\ref{2a}) for $0 < \alpha < n$, we have
	\begin{eqnarray*}
		| {S}_{\alpha }(f,h)|&=&\widetilde{V}_{\alpha}\left ({S}_{\alpha }(f,h),{S}_{\alpha }(f,h)\right)\nonumber\\
		&\leq& \widetilde{V}_{\alpha}({({S}_{\alpha }(f,h))}^{\star },{S}_{\alpha }(f^{\star },h^{\star }))\nonumber\\
		&=& |({S}_{\alpha }(f,h))^{\star }|^{\frac{n-\alpha }{n}}|{S}_{\alpha }(f^{\star },h^{\star })|^{\frac{\alpha }{n}}\nonumber\\
		&=&|{S}_{\alpha }(f,h)|^{\frac{n-\alpha }{n}}|{S}_{\alpha }(f^{\star },h^{\star })|^{\frac{\alpha }{n}}.
	\end{eqnarray*} 
	Thus $\left | S_{\alpha }\left ( f,h \right )   \right |\le  \left | S_{\alpha }\left ( f^{\star } ,h^{\star }  \right )   \right |$. The case of equality is determined by Lemma \ref{4.2}.
	
	Second, suppose that $| {S}_{\alpha }(f,h)| =\infty $. For $k\ge 1
	$, define
	\begin{eqnarray*}
		{f}_{k}(x)=f(x){1}_{kB^{n}}(x),\;\;{h}_{k}(x)=h(x){1}_{kB^{n}}(x).
	\end{eqnarray*} 
Remark that ${f}_{k}, {h}_{k}$ are non-decreasing with respect to $k$ and converge pointwise to $f$ and $h$, respectively.
	Applying the monotone convergence theorem, we can see that
	\begin{eqnarray*}
		\lim_{k\to \infty }\int_{0}^{\infty }{t }^{\alpha-1}\int_{\mathbb{R}^n}f_{k}(x+t\xi )h_{k}(x)dxdt=\int_{0}^{\infty }{t }^{\alpha-1}\int_{\mathbb{R}^n}f(x+t\xi )h(x)dxdt
	\end{eqnarray*} 
	and the convergence is monotone. Applying the monotone convergence theorem again, we have
	\begin{eqnarray*}
		\lim_{k\to \infty }\int_{\mathbb{S}^{n-1}}\left(\int_{0}^{\infty }{t }^{\alpha-1}\int_{\mathbb{R}^n}f_{k}(x+t\xi )h_{k}(x)dxdt\right)^{\frac{n}{\alpha }}d\xi \\	=\int_{\mathbb{S}^{n-1}}\left(\int_{0}^{\infty }{t }^{\alpha-1}\int_{\mathbb{R}^n}f(x+t\xi )h(x)dxdt\right)^{\frac{n}{\alpha }}d\xi .
	\end{eqnarray*} 
	Therefore,
	\begin{eqnarray}\label{limk}
		\lim_{k\to \infty }| {S}_{\alpha }(f_{k},h_{k})|=| {S}_{\alpha }(f,h)|=\infty.
	\end{eqnarray} 
	Given that $f\in {L}^{p} (\mathbb{R}^n)$ and $h\in {L}^{r} (\mathbb{R}^n)$, the functions $f_{(k)}$ and $h_{(k)}$ possess compact support, and $| {S}_{\alpha }(f_{k},h_{k})| < \infty $ for all $k\ge 1$. It is easy to show that $(f_{k})^{\star } \le f^{\star }$ and $(h_{k})^{\star } \le h^{\star }$ almost everywhere. Therefore the first part of the lemma yields that
	\begin{eqnarray}\label{limkk}
		| {S}_{\alpha }(f_{k},h_{k})| \leq  | {S}_{\alpha }((f_{k})^{\star },(h_{k})^{\star })|\leq  | {S}_{\alpha }(f^{\star },h^{\star })|
	\end{eqnarray} 
	for $k\ge 1$. From (\ref{limk}) and (\ref{limkk}), it follows that $| {S}_{\alpha }(f^{\star },h^{\star })|=\infty$, as desired.
\end{proof}
\noindent\textbf{Proof of Theorem \ref{1.1}}.
Since ${||f||}_{p}={||f^{\star }||}_{p}$ and ${||h||}_{r}={||h^{\star }||}_{r}$, it follows from the classical HLS inequality (\ref{1.1}), (\ref{3a}), (\ref{2a}), and Lemma \ref{l4.3} that
\begin{eqnarray}\label{e4.2}
	C(n, \alpha, p){||f||}_{p}{||h||}_{r}&\geq&\int_{\mathbb{R}^n}\int_{\mathbb{R}^n}\frac{f^{\star }(x)h^{\star }(y)}{{\left| x-y\right |}^{n-\alpha}}dxdy\nonumber\\
	&=& n\widetilde{V}_{\alpha}\left ({B}^{n},{S}_{\alpha }(f^{\star },h^{\star })\right)\nonumber\\
	&=&n{\omega}_{n}^{\frac{n-\alpha}{n}}{|{S}_{n}(f^{\star },h^{\star })|}^{\frac{\alpha}{n}} \nonumber\\
	&\geq&n{\omega}_{n}^{\frac{n-\alpha}{n}}{|{S}_{\alpha}(f,h)|}^{\frac{\alpha}{n}}.
\end{eqnarray}
If $p=r=2n/n+\alpha $, there is equality in the first inequality (\ref{e1.3}) if and only if equalities in above ineqalities (\ref{e4.2}) hold throughout. 
When equality in the second inequality of (\ref{e4.2}) holds, by  the equality case of Lemma \ref{l4.3}, we have  that $h$ coincides with a translation of $a_{0} f$ and $f\left ( x \right )=a\left ( b ^{2}+\left | \phi \left ( x-x_{0} \right )^{2}    \right |   \right )^{-\frac{n+\alpha }{2} }$ for some $a_{0}$, $a\in \mathbb{R}$, $0\neq b \in\mathbb{R}$, $\phi\in GL(n)$ and $x_{0}\in \mathbb{R}^n$. Then $f^{\star }$ and $h^{\star }$ in terms of the above $f$ and $h$ can satisfy the equality case of the first inquelity in (\ref{e4.2}).
 
For the second inequality, let $K={B}^{n}$ in (\ref{3a}) and by the dual mixed volume inequality (\ref{2a}), we have
\begin{eqnarray*}
	\int_{\mathbb{R}^n}\int_{\mathbb{R}^n}\frac{f(x)h(y)}{{| x-y|}^{n-\alpha}}dxdy=n\widetilde{V}_{\alpha}({B}^{n},{S}_{\alpha }(f,h)) \leq n{\omega}_{n}^{\frac{n-\alpha}{n}}{|{S}_{\alpha}(f,h)|}^{\frac{\alpha}{n}}.
\end{eqnarray*} 
Equality in the second inequality holds if $f$ and $h$ are radially symmetric.
\qed

\section{Generalized affine HLS inequalities for $\alpha > n$}\label{5}
 We will prove generalized affine HLS inequalities for $\alpha > n$ which strengthen and imply (\ref{e1.6}). The following result follows from the Riesz rearrangement inequality and Theorem \ref{2.1Burchard}. Observe that the function $\left\|\cdot\right\|_{K}^{\alpha-n}$ in (\ref{5bbb}) possesses superlevel sets with infinite measure.
\begin{lem}\label{5a}
Let $\alpha > n$ and $ K\subseteq \mathbb{R}^n $ be a star-shaped set with $0< \left | K \right | < \infty $. For non-negative, non-zero functions $f\in L^{p}\left ( \mathbb{R}^n  \right )$, $h\in L^{r}\left ( \mathbb{R}^n  \right )$ and $\frac{1}{p}+\frac{1}{r}-\frac{\alpha }{n}=1$ satisfying
\begin{equation}
	\int_{\mathbb{R}^n}\int_{\mathbb{R}^n}\frac{f\left ( x \right )h\left ( y \right )  }{\left \| x-y \right \|^{n-\alpha } _{K} }dxdy  < \infty ,\label{5bbb}
\end{equation}
equality in 
\begin{eqnarray}
	\int_{\mathbb{R}^n}\int_{\mathbb{R}^n}\frac{f\left ( x \right )h\left ( y \right )  }{\left \| x-y \right \|^{n-\alpha } _{K}  }dxdy\ge\int_{\mathbb{R}^n}\int_{\mathbb{R}^n}\frac{f^{\star } \left ( x \right )h^{\star } \left ( y \right )  }{\left \| x-y \right \|^{n-\alpha } _{K^{\star } }  }dxdy\label{5cc}
\end{eqnarray}
holds if and only if $K$ is an ellipsoid, and there exists $\phi \in GL(n)$ such that $f$ coincides with a translation of $ f^{\star }\circ \phi^{-1} $and $h$  coincides with a translation of $h^{\star }\circ \phi^{-1}$.  
\end{lem}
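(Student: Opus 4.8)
The plan is to reduce the inequality \eqref{5cc} to the Riesz rearrangement inequality (Theorem \ref{Rri}) applied to a suitable triple of functions, and then to extract the equality characterization from Burchard's Theorem \ref{2.1Burchard} via a layer-cake decomposition. The subtlety, compared with the $0<\alpha<n$ case treated in Lemma \ref{4.1}, is that the kernel $\left\|\cdot\right\|_{K}^{\alpha-n}$ is \emph{increasing} in the radial variable and has superlevel sets of infinite measure, so one cannot apply Theorem \ref{2.1} directly. Instead I would argue on the complementary sublevel sets. First, since $\alpha>n$, write the kernel as an integral over its sublevel sets: for $z\in\mathbb{R}^n$, one has (up to a positive constant and a change of variable)
\begin{eqnarray*}
\left\|z\right\|_{K}^{\alpha-n}=c_{n,\alpha}\int_{0}^{\infty}\left(1_{\mathbb{R}^n}-1_{\{\left\|\cdot\right\|_{K}\le s\}}\right)(z)\,s^{\alpha-n-1}\,ds,
\end{eqnarray*}
so the double integral in \eqref{5bbb} becomes, after Fubini, a weighted integral over $s$ of expressions of the form $\int\int f(x)\,1_{sK^{c}}(x-y)\,h(y)\,dx\,dy$. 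Because $1_{sK^{c}}=1_{\mathbb{R}^n}-1_{sK}$ and $\left\|f\right\|_{1}$, $\left\|h\right\|_{1}$ are invariant under symmetrization, each such term equals a constant (the $\left\|f\right\|_{1}\left\|h\right\|_{1}$ piece) minus $\int\int f(x)\,1_{sK}(x-y)\,h(y)\,dx\,dy$; applying Theorem \ref{Rri} to the latter and reversing the sign gives the reverse inequality \eqref{5cc}, using that $(sK)^{\star}=sK^{\star}$ and $f^{\star},h^{\star}$ have the same integrals as $f,h$. The finiteness hypothesis \eqref{5bbb} guarantees all the Fubini manipulations are legitimate and that the subtracted pieces are finite for a.e.\ $s$.

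For the equality case, equality in \eqref{5cc} forces equality in the Riesz inequality $\int\int f(x)\,1_{sK}(x-y)\,h(y)\,dx\,dy\le\int\int f^{\star}(x)\,1_{sK^{\star}}(x-y)\,h^{\star}(y)\,dx\,dy$ for a.e.\ $s>0$. Now decompose $f$ and $h$ by the layer-cake formula \eqref{2c}, $f=\int_{0}^{\infty}1_{\{f\ge t\}}\,dt$ and $h=\int_{0}^{\infty}1_{\{h\ge u\}}\,du$, so that equality propagates to
\begin{eqnarray*}
\int_{\mathbb{R}^n}\int_{\mathbb{R}^n}1_{\{f\ge t\}}(x)\,1_{sK}(x-y)\,1_{\{h\ge u\}}(y)\,dx\,dy=\int_{\mathbb{R}^n}\int_{\mathbb{R}^n}1_{\{f\ge t\}^{\star}}(x)\,1_{sK^{\star}}(x-y)\,1_{\{h\ge u\}^{\star}}(y)\,dx\,dy
\end{eqnarray*}
for almost every triple $(s,t,u)$ in the relevant range. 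Fix one such $(t,u)$ with $\{f\ge t\}$, $\{h\ge u\}$ of finite positive measure; letting $s$ range so that the radii of $sK^{\star}$, $\{f\ge t\}^{\star}$, $\{h\ge u\}^{\star}$ satisfy the strict triangle inequality $|\alpha_{1}-\alpha_{2}|<\gamma<\alpha_{1}+\alpha_{2}$ of Theorem \ref{2.1Burchard}, Burchard's theorem yields a centered ellipsoid $D=D(t,u)$ and vectors $a(t,u)$, $b(s,t,u)$, $c(t,u)=a(t,u)+b(s,t,u)$ with
\begin{eqnarray*}
\{f\ge t\}=a+\alpha_{1}D,\qquad sK=b+\beta_{1}D,\qquad \{h\ge u\}=c+\gamma_{1}D,
\end{eqnarray*}
up to null sets. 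The $sK$ relation forces $K$ itself to be the ellipsoid $\beta_{1}D/s$ (hence $D$ and $\beta_{1}/s$ are independent of $t,u$), and since $K$ is star-shaped with finite volume and the ellipsoid is centered, $b$ must be $0$, i.e.\ $a=0$ would follow only if... — more carefully, running this over two distinct values of $t$ (resp.\ $u$) shows all the ellipsoids $D(t,u)$ coincide with a single centered ellipsoid $D_{0}$ with $K=\beta D_{0}$, and the superlevel sets of $f$ (resp.\ $h$) are concentric homothets of $D_{0}$ with a common center $x_{f}$ (resp.\ $x_{h}$); comparing the first and third relations with $c=a+b$ and $b=0$ (forced by $K$ centered) gives $x_{f}=x_{h}=:x_{0}$ up to the necessary translations. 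Writing $\phi\in GL(n)$ for the volume-preserving map with $D_{0}=\phi B^{n}/|D_{0}|^{1/n}\cdot\omega_{n}^{1/n}$ suitably normalized, one reads off that $f$ is a translate of $f^{\star}\circ\phi^{-1}$ and $h$ a translate of $h^{\star}\circ\phi^{-1}$. Conversely, if $K$ is an ellipsoid and $f,h$ have that form, all the above inequalities are equalities by the affine invariance of the construction, so the condition is also sufficient.

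The main obstacle I anticipate is the bookkeeping in the equality case: one must ensure that Burchard's strict triangle-inequality condition is met for a \emph{positive-measure} set of parameters $(s,t,u)$ — this requires knowing that the radii $\alpha_{1}(t)$, $\gamma_{1}(u)$ are not forced to degenerate and that $s$ can be chosen in the admissible window $(|\alpha_{1}-\gamma_{1}|,\alpha_{1}+\gamma_{1})$; then one must patch the ellipsoids and centers obtained for different parameter values into a single $\phi$ and single translation vectors, handling the measure-zero ambiguities throughout. A secondary technical point is justifying the Fubini exchange in the first part when $\left\|\cdot\right\|_{K}^{\alpha-n}$ is unbounded and the sublevel-set pieces are only conditionally controlled; the hypothesis \eqref{5bbb} together with non-negativity of all integrands (Tonelli) makes this routine once the kernel is written as the stated integral. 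I would model the entire argument on Haddad--Ludwig's treatment of the analogous one-function statement \cite[Theorem 14]{JM22}, adapting it to the two-function setting exactly as Lemma \ref{4.2} adapts their $0<\alpha<n$ argument.
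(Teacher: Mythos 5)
Your strategy is the same as the paper's: write the kernel $\left\|\cdot\right\|_{K}^{\alpha-n}$ as an integral over the complements $s^{1/(\alpha-n)}K^{c}$, reduce the reverse Riesz inequality to the forward one on the subtracted $sK$-piece, layer-cake $f$ and $h$, and invoke Burchard's set-version (Theorem \ref{2.1Burchard}) on an admissible window of parameters before patching. You also correctly identify why the function-version Theorem \ref{2.1} cannot be used (the kernel has superlevel sets of infinite measure). So the approach matches.

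There is, however, one step that fails as written. In the inequality part you subtract at the level of $f$ and $h$: you write each sliced term as ``a constant (the $\left\|f\right\|_{1}\left\|h\right\|_{1}$ piece) minus $\int\!\!\int f(x)\,1_{sK}(x-y)\,h(y)\,dx\,dy$.'' But here $\frac{1}{p}+\frac{1}{r}=1+\frac{\alpha}{n}>2$ forces $0<p,r<1$, and $L^{p}$ for $p<1$ does not embed into $L^{1}$ (e.g.\ $f(x)=|x|^{-\beta}1_{|x|<1}$ with $n<\beta<n/p$), so $\left\|f\right\|_{1}\left\|h\right\|_{1}$ may be $+\infty$ and the subtraction is meaningless. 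The paper avoids this by performing the layer-cake decomposition on $f$ and $h$ \emph{first}, so that the subtracted identity reads
\begin{equation*}
\int_{\mathbb{R}^{n}}\!\int_{\mathbb{R}^{n}}1_{\{f\ge u\}}\,k_{s}(x-y)\,1_{\{h\ge t\}}\,dx\,dy
=|\{f\ge u\}|\,|\{h\ge t\}|-\int_{\mathbb{R}^{n}}\!\int_{\mathbb{R}^{n}}1_{\{f\ge u\}}\,1_{s^{1/(\alpha-n)}K}(x-y)\,1_{\{h\ge t\}}\,dx\,dy,
\end{equation*}
where the first term is finite because $f\in L^{p}$, $h\in L^{r}$ give $|\{f\ge u\}|,|\{h\ge t\}|<\infty$ by Chebyshev. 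You should reorder your argument accordingly. A smaller point: your stated reason that $b$ must vanish (``since $K$ is star-shaped with finite volume and the ellipsoid is centered'') is not the actual mechanism; it is rather that $b=b(s)$ scales with $s$ while $c_{t}=a+b$ must be independent of $s$, forcing $b\equiv0$, and this is also how the paper concludes (identifying $K=s^{1/(n-\alpha)}b+(|K|/|D|)^{1/n}D$ and using that $K$, $D$ are fixed). With these two repairs the proposal is sound.
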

\begin{proof}
We express the norm as 
\begin{eqnarray}
	\left \| z \right \|^{\alpha-n }_{K}=\int_{0}^{\infty }k_{s}\left ( z \right )  ds \notag  
\end{eqnarray}
where $k_{s}\left ( z \right )=1_{s^{1/(\alpha-n)  }\left ( \mathbb{R}^n\setminus K  \right )  }\left ( z \right )$. By the layer-cake formula (\ref{2c}) to the functions $f$ and $h$, we obtain 
\begin{eqnarray}
&&\int_{\mathbb{R}^n}\int_{\mathbb{R}^n}\frac{f\left ( x \right )h\left ( y \right )  }{\left \| x-y \right \|^{n-\alpha } _{K} }dxdy\notag\\
&=&\int_{0}^{\infty }\int_{0}^{\infty }\int_{0}^{\infty } \int_{\mathbb{R}^n }  \int_{\mathbb{R}^n }1_{\left \{ f\ge u \right \} }\left ( x \right )k_{s}\left ( x-y \right )1_{\left \{ h\ge t \right \} }(y)dxdydudsdt.\notag
\end{eqnarray}
By Theorem \ref{2.1Burchard}, it follows that
\begin{eqnarray}
&&\int_{\mathbb{R}^n }  \int_{\mathbb{R}^n }1_{\left \{ f\ge u \right \} }\left ( x \right )k_{s}\left ( x-y \right )1_{\left \{ h\ge t \right \} }(y)dxdy\notag\\
&=&\int_{\mathbb{R}^n }  \int_{\mathbb{R}^n }1_{\left \{ f\ge u \right \} }\left ( x \right )\left ( 1-1_{s^{1/(\alpha-n)  } K}\left ( x-y \right )   \right ) 1_{\left \{ h\ge t \right \} }(y)dxdy\notag\\
&=&\int_{\mathbb{R}^n }  \int_{\mathbb{R}^n }1_{\left \{ f\ge u \right \} }(x)1_{\left \{ h\ge t \right \} }(y)dxdy\notag\\
&-&\int_{\mathbb{R}^n }  \int_{\mathbb{R}^n }1_{\left \{ f\ge u \right \} }(x)1_{s^{1/(\alpha-n)  } K}\left ( x-y \right )1_{\left \{ h\ge t \right \} }(y)dxdy\notag\\
&\ge& \int_{\mathbb{R}^n }  \int_{\mathbb{R}^n }1_{\left \{ f\ge u \right \}^{\star }  }\left ( x \right )k^{\star } _{s}\left ( x-y \right )1_{\left \{ h\ge t \right \}^{\star }  }(y)dxdy\notag 
\end{eqnarray}
for $r,s,t> 0$. Given that $f\in L^{p}\left ( \mathbb{R}^n  \right )$ and $h\in L^{r}\left ( \mathbb{R}^n  \right )$, it follows that $\int_{\mathbb{R}^n }1_{\left \{ f\ge u \right \} }\left ( x \right )dx< \infty$ and $\int_{\mathbb{R}^n }1_{\left \{ h\ge t \right \} }\left ( y \right )dy< \infty$ for $r, u > 0$. If equality holds in (\ref{5cc}), thus there exists a null set $N\subseteq \left ( 0,\infty  \right )^{3}$ such that  
\begin{eqnarray}
	&&\int_{\mathbb{R}^n }  \int_{\mathbb{R}^n }1_{\left \{ f\ge u \right \} }\left ( x \right )1_{s^{1/(\alpha-n)  } K}\left ( x-y \right )  1_{\left \{ h\ge t \right \} }(y)dxdy\notag\\
	&=&\int_{\mathbb{R}^n }  \int_{\mathbb{R}^n }1_{\left \{ f\ge u \right \}^{\star }  }\left ( x \right )1_{s^{1/(\alpha-n)  } K^{\star }}\left ( x-y \right )  1_{\left \{ h\ge t \right \}^{\star }  }(y)dxdy\notag	 
\end{eqnarray}  
for all $\left ( u,s,t \right )\in \left ( 0,\infty  \right )^{3}\setminus N$. Moreover, for almost every $\left ( u,t \right ) \in \left ( 0,\infty  \right )^{2}$,  we can see that  $\left ( u,s,t \right )\in \left ( 0,\infty  \right )^{3}\setminus N $ for almost every $s> 0$. Let $\alpha$, $\beta$ and $\gamma$  are the radii of ${\left \{ f\ge u \right \}}^{\star}$, ${s^{1/(\alpha-n)}K}^{\star}$ and ${\left \{ h\ge t \right \}}^{\star}$, respectively. 
Fixed $u$ and $ s $, 
then there exists a closed interval $[{\bar{t}}_{u,s},{\tilde{t}}_{u,s}]$ with respect to $ u $ and $ s $ such that $ |\alpha-\beta| <\gamma<\alpha+\beta$ for any $t\in[{\bar{t}}_{u,s},{\tilde{t}}_{u,s}]$.  Thus, by Theorem \ref{2.1Burchard}, there exists an ellipsoid $D$ and vectors $a, b \in \mathbb{R}^n$ such that  
\begin{eqnarray}
	\left \{ f\ge u \right \}=a+\alpha D,\; s^{1/(\alpha-n)}K=b+\beta D,\; \left \{ h\ge t \right \}={c}_{t}+\gamma D\notag
\end{eqnarray}  
where ${c}_{t} = a + b$. Then for $t\in[{\bar{t}}_{u,s},{\tilde{t}}_{u,s}]$, ${c}_{t}$ is a constant vector. When $u$ and $s$ vary, the interval $\displaystyle\bigcup_{u,s>0} [{\bar{t}}_{u,s},{\tilde{t}}_{u,s}]$ covers $(0, +\infty)$, thus for any $t\in(0, +\infty)$, ${c}_{t}$ is a constant vector.  
Since $K = s^{1/(n-\alpha)}b + \left( \left| K \right| / \left| D \right| \right)^{\frac{1}{n}}D$, the ellipsoid $D$ is independent of $\left( u, s, t \right)$, and consequently, the vector $b$ is a constant vector. Thus, vector $a$ is also a constant vector. This completes the proof.
\end{proof}
\begin{lem}\label{5b}
Let $\alpha > n$ and $K\subseteq \mathbb{R}^n$ be star-sharped with measureable radial function. Suppose $f,h:\mathbb{R}^n \longrightarrow \left [ 0,\infty  \right ) $  are non-zero  measurable functions and
\begin{eqnarray}
\int_{\mathbb{R}^n}\int_{\mathbb{R}^n}\frac{f\left ( x \right )h\left ( y \right )  }{\left \| x-y \right \|^{n-\alpha } _{K} }dxdy  < \infty ,\notag
\end{eqnarray}
then the following inequality 
\begin{eqnarray}
\widetilde{V}_{\alpha } \left ( K,S_{\alpha }\left ( f,h \right )   \right )\ge\widetilde{V}_{\alpha } \left ( K^{\star } ,S_{\alpha }\left ( f^{\star } ,h^{\star }  \right )   \right )\notag
\end{eqnarray}
holds. Moreover, when $\left | K \right |> 0 $, equality occurs if and only if $K$ is an ellipsoid, and there exists  $\phi\in GL\left ( n \right ) $ such that $f$ coincides with a translation of $ f^{\star }\circ \phi^{-1} $and $h$  coincides with a translation of $h^{\star }\circ \phi^{-1}$.  
\end{lem}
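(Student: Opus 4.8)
The plan is to follow the proof of Lemma~\ref{4.2} almost verbatim, with one essential substitution: for $0<\alpha<n$ that argument rested on the Riesz rearrangement inequality (Theorem~\ref{Rri}) applied to the decreasing kernel $1/\left\|\cdot\right\|_K^{n-\alpha}$, but for $\alpha>n$ this kernel is increasing in $|z|$ and has superlevel sets of infinite measure, so Theorem~\ref{Rri} is unavailable. Its role will be played by Lemma~\ref{5a}, which is precisely the reverse rearrangement statement needed here. So the proof reduces to: (i) dispose of the degenerate values of $|K|$; (ii) translate the claim, via~(\ref{3a}), into inequality~(\ref{5cc}); (iii) read off the equality case from Lemma~\ref{5a}.

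First I would handle the extreme values of $|K|$. If $|K|=0$, then $\rho_K=0$ almost everywhere on $\mathbb{S}^{n-1}$, hence $\left\|x-y\right\|_K=+\infty$ and, since $n-\alpha<0$, $1/\left\|x-y\right\|_K^{n-\alpha}=+\infty$ for almost every $(x,y)$; as $f$ and $h$ are non-zero, the double integral in the hypothesis is then infinite and the assertion is vacuous. If $|K|=\infty$, then $K^\star=\mathbb{R}^n$, so $\left\|\cdot\right\|_{K^\star}\equiv 0$ and, again because $n-\alpha<0$, the right-hand integral in~(\ref{5cc}) vanishes, making the claimed inequality trivial. Hence it suffices to treat $0<|K|<\infty$, which matches the hypotheses of Lemma~\ref{5a}.

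In that case, applying identity~(\ref{3a}) to $K$ and to $K^\star$ gives
\begin{eqnarray*}
\widetilde{V}_{\alpha}\left(K,S_{\alpha}(f,h)\right)&=&\frac{1}{n}\int_{\mathbb{R}^n}\int_{\mathbb{R}^n}\frac{f(x)h(y)}{\left\|x-y\right\|_{K}^{n-\alpha}}\,dx\,dy,\\
\widetilde{V}_{\alpha}\left(K^\star,S_{\alpha}(f^\star,h^\star)\right)&=&\frac{1}{n}\int_{\mathbb{R}^n}\int_{\mathbb{R}^n}\frac{f^\star(x)h^\star(y)}{\left\|x-y\right\|_{K^\star}^{n-\alpha}}\,dx\,dy,
\end{eqnarray*}
so the desired inequality is exactly~(\ref{5cc}). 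I would remark that the only feature of the hypotheses of Lemma~\ref{5a} that its proof of~(\ref{5cc}) actually uses is that the superlevel sets $\{f\ge u\}$ and $\{h\ge t\}$ have finite measure for all $u,t>0$ (that is what the integrability $f\in L^p$, $h\in L^r$ was invoked for there); here this is part of the ambient setup, being exactly what is needed for $f^\star$ and $h^\star$ to be defined. Hence Lemma~\ref{5a} applies and yields $\widetilde{V}_{\alpha}(K,S_{\alpha}(f,h))\ge\widetilde{V}_{\alpha}(K^\star,S_{\alpha}(f^\star,h^\star))$.

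Finally, when $0<|K|<\infty$, equality in the displayed inequality is equality in~(\ref{5cc}), and Lemma~\ref{5a} characterizes this as: $K$ is an ellipsoid and there is $\phi\in GL(n)$ with $f$ a translate of $f^\star\circ\phi^{-1}$ and $h$ a translate of $h^\star\circ\phi^{-1}$. I do not expect any genuine obstacle in assembling Lemma~\ref{5b} itself; the substantive difficulty is already contained in Lemma~\ref{5a} and does not need to be reproved here. That difficulty is exactly the detour around the Riesz inequality for an increasing kernel, carried out in the proof of Lemma~\ref{5a} by writing $\left\|z\right\|_K^{\alpha-n}=\int_0^\infty\left(1-1_{s^{1/(\alpha-n)}K}(z)\right)ds$, reducing to the bounded indicator $1_{s^{1/(\alpha-n)}K}$ via the measure identity $|A|=|A^\star|$ together with Burchard's Theorem~\ref{2.1Burchard}, and then propagating the ellipsoid and the translation vectors through the integration in the parameters $(u,s,t)$.
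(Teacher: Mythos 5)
Your proof is correct and follows essentially the same route as the paper, which likewise applies identity~(\ref{3a}) to both $K$ and $K^{\star}$ and then invokes Lemma~\ref{5a} for both the inequality~(\ref{5cc}) and the characterization of equality. The only difference is that you explicitly dispose of the degenerate values $|K|=0$ (where the integrability hypothesis fails) and $|K|=\infty$ (where the right-hand side vanishes) before invoking Lemma~\ref{5a}, a step the paper omits.
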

\begin{proof}
Combining (\ref{3a})and Lemma \ref{5a}, we obtain the inequality
\begin{eqnarray*}
	\widetilde{V}_{\alpha} (K,{S}_{\alpha }(f,h))
	&=&\frac{1}{n}\int_{\mathbb{R}^n}\int_{\mathbb{R}^n}\frac{f(x)h(y)}{{\left \| x-y\right \|}_{K }^{n-\alpha}}dxdy\nonumber\\
	&\ge&  \frac{1}{n}\int_{\mathbb{R}^n}\int_{\mathbb{R}^n}\frac{f(x)^{\star }h(y)^{\star }}{{\left \| x-y\right \|}_{K ^{\star }}^{n-\alpha}}dxdy= n\widetilde{V}_{\alpha} (K^{\star },{S}_{\alpha }(f^{\star },h^{\star })).
\end{eqnarray*}
Furthermore, Lemma \ref{5a} implies that equality holds if and only if $K$ is a ellipsoid, and there exists  $\phi \in GL\left ( n \right ) $ such that $f$ coincides with a translation of $ f^{\star }\circ \phi^{-1} $and $h$  coincides with a translation of $h^{\star }\circ \phi^{-1}$. 
\end{proof}
\begin{lem}\label{5c}
Let $\alpha > n$ and consider measurable functions $f, h:\mathbb{R}^n \rightarrow \left [ 0,\infty  \right ) $. If $\left | S_{\alpha }\left ( f,h \right )   \right |<  \infty   $, then the following inequality holds:
\begin{eqnarray}
\left | S_{\alpha }\left ( f,h \right )   \right |\ge\left | S_{\alpha }\left ( f^{\star } ,h^{\star }  \right )   \right |.\notag
\end{eqnarray}
Equality occurs if and only if there exists  $\phi\in GL\left ( n \right ) $ such that $f$ coincides with a translation of $ f^{\star }\circ \phi^{-1} $and $h$  coincides with a translation of $h^{\star }\circ \phi^{-1}$.
\end{lem}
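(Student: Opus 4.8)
The plan is to mirror the argument used in Lemma \ref{l4.3}, but with the $\alpha > n$ versions of the dual mixed volume inequality and of the rearrangement lemma. First I would assume $|S_\alpha(f,h)| < \infty$, which by \eqref{3a} applied with $K = S_\alpha(f,h)$ (a star-shaped set with measurable radial function and positive, finite volume) means $\int_{\mathbb{R}^n}\int_{\mathbb{R}^n} f(x)h(y)/\|x-y\|_{S_\alpha(f,h)}^{n-\alpha}\,dx\,dy = n|S_\alpha(f,h)| < \infty$, so the hypotheses of Lemma \ref{5b} are met with this choice of $K$. Then Lemma \ref{5b} gives
\begin{eqnarray*}
|S_\alpha(f,h)| = \widetilde{V}_\alpha\big(S_\alpha(f,h), S_\alpha(f,h)\big) \ge \widetilde{V}_\alpha\big((S_\alpha(f,h))^\star, S_\alpha(f^\star,h^\star)\big),
\end{eqnarray*}
and since $\alpha > n$ the dual mixed volume inequality \eqref{2b} applied to the star-shaped sets $(S_\alpha(f,h))^\star$ and $S_\alpha(f^\star,h^\star)$ yields
\begin{eqnarray*}
\widetilde{V}_\alpha\big((S_\alpha(f,h))^\star, S_\alpha(f^\star,h^\star)\big) \ge \big|(S_\alpha(f,h))^\star\big|^{\frac{n-\alpha}{n}} \big|S_\alpha(f^\star,h^\star)\big|^{\frac{\alpha}{n}} = |S_\alpha(f,h)|^{\frac{n-\alpha}{n}}\, |S_\alpha(f^\star,h^\star)|^{\frac{\alpha}{n}},
\end{eqnarray*}
using that Schwarz symmetrization preserves volume. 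Chaining the two displays and cancelling the factor $|S_\alpha(f,h)|^{\frac{n-\alpha}{n}}$ (legitimate since this quantity is finite; one should dispose of the trivial case $|S_\alpha(f,h)| = 0$ separately, where the inequality is immediate because then $\rho_{S_\alpha(f,h)} = 0$ a.e.\ forces $f$ or $h$ to vanish a.e.\ and hence $S_\alpha(f^\star,h^\star)$ is null too) gives $|S_\alpha(f,h)| \ge |S_\alpha(f^\star,h^\star)|$.

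For the equality characterization, if $|S_\alpha(f,h)| = |S_\alpha(f^\star,h^\star)|$ then both inequalities above must be equalities. Equality in \eqref{2b} forces $(S_\alpha(f,h))^\star$ and $S_\alpha(f^\star,h^\star)$ to be dilates; but since $S_\alpha(f^\star,h^\star)$ is a ball centered at the origin (being built from the radial functions of the radially symmetric $f^\star$, $h^\star$) with the same volume as $(S_\alpha(f,h))^\star$, they actually coincide, so this equality imposes no extra constraint. The decisive constraint comes from equality in Lemma \ref{5b} with $K = S_\alpha(f,h)$: it forces $S_\alpha(f,h)$ to be an ellipsoid and gives a $\phi \in GL(n)$ with $f$ a translate of $f^\star \circ \phi^{-1}$ and $h$ a translate of $h^\star \circ \phi^{-1}$. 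Conversely, if such a $\phi$ exists, then by the covariance $S_\alpha(f \circ \phi^{-1}, h \circ \phi^{-1}) = \phi\, S_\alpha(f,h)$ noted in the introduction together with translation invariance, $S_\alpha(f,h)$ is a $GL(n)$-image of $S_\alpha(f^\star,h^\star)$, hence (after volume-preserving normalization of $\phi$, which one may arrange since only the equivalence class matters) has the same volume, giving equality.

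The main obstacle is the case $|S_\alpha(f,h)| = \infty$: unlike Lemma \ref{l4.3}, here there is no integrability hypothesis forcing finiteness, so strictly speaking the statement as written only asserts the inequality under $|S_\alpha(f,h)| < \infty$, and the conclusion $|S_\alpha(f^\star,h^\star)| \le |S_\alpha(f,h)| = \infty$ is then vacuous — so actually no separate truncation argument is needed and the lemma is exactly as restrictive as stated. A secondary technical point to handle carefully is verifying that $S_\alpha(f,h)$ genuinely qualifies as the star-shaped set $K$ in Lemma \ref{5b}: one needs $\rho_{S_\alpha(f,h)}$ measurable (clear from \eqref{1c} and Fubini) and, for the equality part, $0 < |S_\alpha(f,h)| < \infty$, which is exactly the standing assumption together with the disposal of the degenerate zero-volume case.
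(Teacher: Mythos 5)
Your proof is correct and follows essentially the same route as the paper: apply Lemma~\ref{5b} with the choice $K = S_\alpha(f,h)$, combine with the dual mixed volume inequality~\eqref{2b}, and cancel the factor $|S_\alpha(f,h)|^{(n-\alpha)/n}$ to obtain $|S_\alpha(f,h)|\ge|S_\alpha(f^\star,h^\star)|$, with the equality case inherited from Lemma~\ref{5b}. The additional care you take over the degenerate case $|S_\alpha(f,h)|=0$ (where the exponent $(n-\alpha)/n<0$ would otherwise produce a $0^{-}$ indeterminacy) and the explicit check of the converse direction are details the paper leaves implicit, and your handling of them is sound.
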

\begin{proof}
Applying Lemma \ref{5b} with $K=S_{\alpha }\left ( f,h \right ) $ and combining with the dual mixed volume inequality (\ref{2b}) for $\alpha > n$, we obtain
\begin{eqnarray}
\left | S_{\alpha }\left ( f,h \right )   \right | &=&\widetilde{V}_{\alpha }\left (S_{\alpha }\left ( f,h \right ),S_{\alpha }\left ( f,h \right ) \right )\notag\\
&\ge&  \widetilde{V}_{\alpha }\left (S_{\alpha }\left ( f,h \right )^{\star } ,S_{\alpha }\left ( f^{\star } ,h^{\star }  \right ) \right )\notag\\
&\ge&\left | \left ( S_{\alpha }\left ( f,h \right )    \right )^{\star }   \right |^{\frac{n-\alpha }{n}}\left | \left ( S_{\alpha }\left ( f^{\star } ,h^{\star }  \right )    \right )  \right |^{\frac{\alpha }{n}}\notag\\
&=&  \left | \left ( S_{\alpha }\left ( f,h \right )    \right )   \right |^{\frac{n-\alpha }{n}}\left | \left ( S_{\alpha }\left ( f^{\star }  ,h^{\star } \right )    \right )  \right |^{\frac{\alpha }{n}}.\notag 
\end{eqnarray}
Thus $\left | S_{\alpha }\left ( f,h \right )   \right |\ge\left | S_{\alpha }\left ( f^{\star } ,h^{\star }  \right )   \right |$.
The equality case is immediately obtained from Lemma \ref{5b}.
\end{proof}
We now establish the extended version of affine HLS inequalities for the case $\alpha > n$.

\noindent\textbf{Proof of Theorem \ref{1.2}}.
In order to obtain the first inequality, we suppose that $\left | S_{\alpha }\left ( f,h \right )   \right |< \infty $. By $\left \| f \right \|_{p}=\left \| f^{\star }  \right \|_{p}$, $\left \| h \right \|_{r}=\left \| h^{\star }  \right \|_{r} $, HLS inequality (\ref{e1.6}), (\ref{3a}), equality case of (\ref{2b}) and Lemma \ref{5c}, we have
\begin{eqnarray}\label{e5.2}
C\left ( n,\alpha ,p \right )\left \| f \right \|_{p}\left \| h \right \|_{r}&\le&\int_{\mathbb{R}^n}\int_{\mathbb{R}^n}\frac{f^{\star } \left ( x \right )h^{\star } \left ( y \right )  }{\left | x-y \right |^{n-\alpha }  }dxdy\notag\\
&=&n\widetilde{V}_{\alpha } \left ( B^{n},S_{\alpha  }\left ( f^{\star },h^{\star }   \right )    \right )\notag\\
&=& n\omega^{\frac{n-\alpha }{n} }_{n}\left | S_{\alpha  }\left ( f^{\star } ,h^{\star }  \right )   \right |^{\frac{\alpha }{n} }\notag\\
&\le&n\omega^{\frac{n-\alpha }{n} }_{n}\left | S_{\alpha  }\left ( f ,h  \right )   \right |^{\frac{\alpha }{n} }.
\end{eqnarray}
If $p=r=2n/n+\alpha $, there is equality in the first inequality (\ref{e1.2}) if and only if equalities in above ineqalities (\ref{e5.2}) hold throughout. 
When equality in the second inequality of (\ref{e5.2}) holds, by  the equality case of Lemma \ref{5c}, we have  that $h$ coincides with a translation of $a_{0} f$ and $f\left ( x \right )=a\left ( b ^{2}+\left | \phi \left ( x-x_{0} \right )^{2}    \right |   \right )^{-\frac{n+\alpha }{2} }$ for some $a_{0}$, $a\in \mathbb{R}$, $0\neq b \in\mathbb{R}$, $\phi\in GL(n)$ and $x_{0}\in \mathbb{R}^n$. Note that $f^{\star }$ and $h^{\star }$ in terms of the above $f$ and $h$ can satisfy the equality case of the first inquelity in (\ref{e5.2}).

For the second inequality, suppose 
\begin{eqnarray}
\int_{\mathbb{R}^n}\int_{\mathbb{R}^n}\frac{f\left ( x \right )h\left ( y \right )  }{\left | x-y \right |^{n-\alpha } }dxdy  < \infty .\notag
\end{eqnarray}
Taking $K=B^{n} $ in (\ref{3a}) and using the dual mixed volume inequality (\ref{2b}), we derive
\begin{eqnarray}
\int_{\mathbb{R}^n}\int_{\mathbb{R}^n}\frac{f\left ( x \right )h\left ( y \right )  }{\left | x-y \right |^{n-\alpha } }dxdy=n\widetilde{V}_{\alpha  }\left ( B^{n},S_{\alpha  }\left ( f,h   \right )    \right )\ge n\omega^{\frac{n-\alpha }{n} }_{n}\left | S_{\alpha  }\left ( f,h  \right )   \right |^{\frac{\alpha }{n} }.\notag
\end{eqnarray}
Equality occurs if and only if  $f$ and $h$ are radially symmetric functions.
\qed

\section{Radial Mean Bodies}\label{6}
In the context of convex geometry, consider two convex bodies $E$ and $F$ in $ \mathbb{R}^n$ such that $F\subseteq E$. For  $\alpha > -1$, R. Gardner and G. Zhang \cite{GZ98} introduced the generalized  radial $\alpha $-th mean body ${R}_{\alpha }(E,F)$, whose radial function for $\xi \in {\mathbb{S}}^{n-1} $ is defined by the equation:
\begin{eqnarray}\label{66}
{{\rho}_{{R}_{\alpha }(E,F)} (\xi)}^{\alpha }=\frac{1}{|F|}\int_{F}{{\rho}_{E-x} (\xi)}^{\alpha }dx.
\end{eqnarray} 

J. Haddad and M. Ludwig \cite{JM2025} defined the $s$-fractional $L^p$ polar projection body.
Let $0<s<1$ and $1<p<n / s$. For a measurable function $f: \mathbb{R}^n \rightarrow \mathbb{R}$, the $s$-fractional $L^p$ polar projection body $\Pi_p^{*, s} f$ as the star-shaped set is given by the radial function
$$
{\rho}_{\Pi_p^{*, s} f}^{-p s}(\xi)=\int_0^{\infty} t^{-p s-1} \int_{\mathbb{R}^n}|f(x+t \xi)-f(x)|^p \mathrm{~d} x \mathrm{~d} t
$$
for $\xi \in \mathbb{R}^n$. 

For measurable functions $f,h:\mathbb{R}^{n}\rightarrow \mathbb{R}$ and $-1<\alpha<0$, we define the generalized asymmetric  $\alpha$-fractional $L^2$ polar projection body of $f$ and $h$, written as ${\Pi }_{2,-}^{*,-\frac{\alpha }{2} }(f,h)$, by its radial function for $\xi \in {\mathbb{S}}^{n-1} $:
\begin{eqnarray}
{{\rho}_{ {\Pi }_{2,-}^{*,-\frac{\alpha }{2} }(f,h)} (\xi)}^{\alpha }=\int_{0}^{\infty }{t}^{\alpha-1}\int_{\mathbb{R}^n}{\left(f\left ( x+t\xi \right ) -h(x)\right)}^{2} _{-}dxdt.	
\end{eqnarray}
In the special case where $f=1_{E}$ and $h=1_{F} $,  we denote the generalized asymmetric fractional polar projection body of $1_{E}$ and $1_{F}$ by ${\Pi}_{2,-}^{*,-\frac{\alpha }{2} }(E,F)$. Note that its radial function satisfies:
\begin{eqnarray}\label{67}
{{\rho}_{ {\Pi }_{2,-}^{*,-\frac{\alpha }{2} }(E,F)} (\xi)}^{\alpha }=\int_{0}^{\infty }{t}^{\alpha-1}\int_{\mathbb{R}^n}{\left(1_{E}(x+t\xi)-1_{F}(x)\right)}_{-}dxdt.
\end{eqnarray} 
For two convex bodies $E$ and $F$ in $ \mathbb{R}^n$, let ${S}_{\alpha }(E,F)={S}_{\alpha }(1_{E},1_{F})$.
\begin{lem}\label{L6.1}
Let $E$, $F\subseteq \mathbb{R}^n$ be two convex bodies and $F\subseteq E $. For $\alpha>0$, we have
\begin{eqnarray}\label{6.4}
{S}_{\alpha }(E,F)=\left(\frac{|F|}{\alpha }\right)^{\frac{1}{\alpha}}{R}_{\alpha }(E,F).
\end{eqnarray}
\end{lem}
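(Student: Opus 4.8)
The plan is to compute the radial function of $S_{\alpha}(E,F)$ directly from its definition (\ref{1c}) and compare it term-by-term with the radial function of the generalized radial $\alpha$-mean body $R_{\alpha}(E,F)$ given in (\ref{66}). Starting from
\begin{eqnarray*}
\rho_{S_{\alpha}(E,F)}^{\alpha}(\xi)=\int_{0}^{\infty}t^{\alpha-1}\int_{\mathbb{R}^n}1_{F}(x)\,1_{E}(x+t\xi)\,dx\,dt,
\end{eqnarray*}
I would interchange the order of integration (Fubini; everything is non-negative and, since $E$, $F$ are bounded convex bodies, finite) so that the inner variable becomes $x\in F$ and the outer integral is $\int_{0}^{\infty}t^{\alpha-1}1_{E}(x+t\xi)\,dt$. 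For fixed $x\in F$, the set of $t>0$ with $x+t\xi\in E$ is exactly the interval $[0,\rho_{E-x}(\xi)]$, because $E$ is convex and (as $F\subseteq E$) contains $x$, so the ray $x+t\xi$ enters $E$ at $t=0$ and leaves at $t=\rho_{E-x}(\xi)$. Hence $\int_{0}^{\infty}t^{\alpha-1}1_{E}(x+t\xi)\,dt=\int_{0}^{\rho_{E-x}(\xi)}t^{\alpha-1}\,dt=\rho_{E-x}(\xi)^{\alpha}/\alpha$.

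Substituting this back gives
\begin{eqnarray*}
\rho_{S_{\alpha}(E,F)}^{\alpha}(\xi)=\frac{1}{\alpha}\int_{F}\rho_{E-x}(\xi)^{\alpha}\,dx=\frac{|F|}{\alpha}\cdot\frac{1}{|F|}\int_{F}\rho_{E-x}(\xi)^{\alpha}\,dx=\frac{|F|}{\alpha}\,\rho_{R_{\alpha}(E,F)}^{\alpha}(\xi),
\end{eqnarray*}
where the last equality is precisely the defining relation (\ref{66}). Taking $\alpha$-th roots yields $\rho_{S_{\alpha}(E,F)}(\xi)=(|F|/\alpha)^{1/\alpha}\rho_{R_{\alpha}(E,F)}(\xi)$ for every $\xi\in\mathbb{S}^{n-1}$, which is exactly (\ref{6.4}) since a star-shaped set is determined by its radial function.

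The only genuine point requiring care—the main obstacle, such as it is—is the claim that for $x\in F\subseteq E$ the ray $\{x+t\xi:t\ge 0\}$ meets $E$ in the interval $[0,\rho_{E-x}(\xi)]$ with $\rho_{E-x}(\xi)$ finite. This uses the convexity and boundedness of $E$ together with $x\in E$ (so $0$ is in the star body $E-x$, which is in fact convex), and it is exactly what makes the elementary antiderivative $\int_{0}^{\rho}t^{\alpha-1}dt=\rho^{\alpha}/\alpha$ applicable; note this is where $\alpha>0$ is needed for convergence at $t=0$. I would state this ray-intersection fact explicitly (it is standard for convex bodies) and otherwise the argument is just Fubini and a one-variable integral.
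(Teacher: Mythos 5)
Your proof is correct and takes a genuinely different, more direct route than the paper. The paper computes the double integral $\int\int 1_E(x)1_F(y)\|x-y\|_K^{\alpha-n}\,dx\,dy$ in two ways to obtain the identity $\widetilde V_\alpha(K,S_\alpha(E,F)) = \tfrac{|F|}{\alpha}\widetilde V_\alpha(K,R_\alpha(E,F))$ for all star-shaped $K$, then takes $K$ equal to each of the two bodies in turn, applies the dual mixed volume inequality (\ref{2a}) or (\ref{2b}) to get chains of inequalities that collapse to equality of volumes, and finally invokes the equality case of H\"older to conclude the bodies are dilates. Your argument instead computes the radial function $\rho_{S_\alpha(E,F)}$ pointwise: after interchanging the order of integration, the inner integral $\int_0^\infty t^{\alpha-1}1_E(x+t\xi)\,dt$ reduces (for $x\in F\subseteq E$, by convexity of $E$) to $\rho_{E-x}(\xi)^\alpha/\alpha$, whence $\rho_{S_\alpha(E,F)}^\alpha(\xi) = \tfrac{|F|}{\alpha}\rho_{R_\alpha(E,F)}^\alpha(\xi)$ directly from (\ref{66}). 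This is shorter, does not split into cases $\alpha<n$ and $\alpha>n$, and does not appeal to the equality conditions of the dual mixed volume inequality; it identifies the bodies as equal rather than merely as dilates of the same volume. The one point you flag — that $\{t\ge 0 : x+t\xi\in E\}$ is the interval $[0,\rho_{E-x}(\xi)]$ because $x\in E$ and $E$ is convex and bounded — is indeed where the hypothesis $F\subseteq E$ enters, and your treatment of it is adequate. The paper's route has the advantage of fitting the dual-mixed-volume framework used throughout (and of reusing the identity $\widetilde V_\alpha(K,S_\alpha(E,F)) = \tfrac{|F|}{\alpha}\widetilde V_\alpha(K,R_\alpha(E,F))$, which appears implicitly elsewhere), but for proving the lemma itself your direct computation is the cleaner argument.
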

\begin{proof}
For $\alpha >0$, by the polar coordinates transformation, Fubini’s theorem, (\ref{66}) and (\ref{2.1a}), we can see that
\begin{eqnarray}\label{e6.1}
\int_{\mathbb{R}^n}\int_{\mathbb{R}^n}\frac{1_{E}(x)1_{F}(y)}{{\left \| x-y\right \|}_{K }^{n-\alpha}}dxdy&=&\int_{F}\int_{E}\frac{1}{{\left \| x-y\right \|}_{K }^{n-\alpha}}dxdy\nonumber\\
&=& \int_{F}\int_{E-y}\frac{1}{{{\left \| z\right \|}_{K }^{n-\alpha}}}dzdy\nonumber\\
&=&\int_{F}\int_{\mathbb{S}^{n-1}}\int_{0}^{{{\rho}_{E-y} (\xi)} }\frac{1}{{\left \|t\xi\right \|}_{K }^{n-\alpha}}{t}^{n-1}dtd\xi dy \nonumber\\
&=&\frac{1}{\alpha }\int_{F}\int_{\mathbb{S}^{n-1}}{{\rho}_{E-y} (\xi)}^{\alpha }{{\rho}_{K} (\xi)}^{n-\alpha }d\xi dy\nonumber\\
&=&\frac{|F|}{\alpha }\int_{\mathbb{S}^{n-1}}{{\rho}_{K} (\xi)}^{n-\alpha }\frac{1}{|F|}\int_{F}{{\rho}_{E-y} (\xi)}^{\alpha }dyd\xi\nonumber\\
&=&\frac{|F|}{\alpha }\int_{\mathbb{S}^{n-1}}{{\rho}_{K} (\xi)}^{n-\alpha }{{\rho}_{{R}_{\alpha }(E,F)} (\xi)}^{\alpha }d\xi\nonumber\\
&=&\frac{n|F|}{\alpha }\widetilde{V}_{\alpha} (K,{R}_{\alpha }(E,F)).
\end{eqnarray} 
Taking $f=1_{E}$ and $h=1_{F}$ in (\ref{3a}), we have
\begin{eqnarray}\label{e6.2}
\int_{\mathbb{R}^n}\int_{\mathbb{R}^n}\frac{1_{E}(x)1_{F}(y)}{{\left \| x-y\right \|}_{K }^{n-\alpha}}dxdy=n\widetilde{V}_{\alpha}(K,{S}_{\alpha }(E,F)).
\end{eqnarray}
By (\ref{e6.1}) and (\ref{e6.2}), we have
\begin{eqnarray}\label{e6a}
\widetilde{V}_{\alpha} (K,{S}_{\alpha }(E,F))=\frac{|F|}{\alpha }\widetilde{V}_{\alpha} (K,{R}_{\alpha }(E,F)).
\end{eqnarray} 
If $0<\alpha <n$, let $K={S}_{\alpha }(E,F)$ in (\ref{e6a}), and by dual mixed volume inequality (\ref{2a}), we obtain that
\begin{eqnarray*}
|{S}_{\alpha }(E,F)|=\frac{|F|}{\alpha }\widetilde{V}_{\alpha}({S}_{\alpha }(E,F),{R}_{\alpha }(E,F))\le \frac{|F|}{\alpha }|{S}_{\alpha }(E,F)|^{\frac{n-\alpha}{n}}|{R}_{\alpha }(E,F)|^{\frac{\alpha}{n}}, 
\end{eqnarray*} 
and thus
\begin{eqnarray}\label{6b}
|{S}_{\alpha }(E,F)|^{\frac{\alpha}{n}}\le \frac{|F|}{\alpha }|{R}_{\alpha }(E,F)|^{\frac{\alpha}{n}}.
\end{eqnarray} 
Let $K={R}_{\alpha }(E,F)$ in (\ref{e6a}), and by dual mixed volume inequality (\ref{2a}), we have
\begin{eqnarray*}
\frac{|F|}{\alpha }|{R}_{\alpha }(E,F)|=\widetilde{V}_{\alpha} ({R}_{\alpha }(E,F),{S}_{\alpha }(E,F))\le |{R}_{\alpha }(E,F)|^{\frac{n-\alpha}{n}}|{S}_{\alpha }(E,F)|^{\frac{\alpha}{n}},
\end{eqnarray*} 
and thus
\begin{eqnarray}\label{6c}
\frac{|F|}{\alpha }|{R}_{\alpha }(E,F)|^{\frac{\alpha}{n}} \le |{S}_{\alpha }(E,F)|^{\frac{\alpha}{n}}.
\end{eqnarray} 
By (\ref{6b}) and (\ref{6c}), we can see that
\begin{eqnarray}\label{e6.3}
|{S}_{\alpha }(E,F)|^{\frac{\alpha}{n}}=\frac{|F|}{\alpha }|{R}_{\alpha }(E,F)|^{\frac{\alpha}{n}}.
\end{eqnarray}
By the equality condition of dual mixed volume inequality (\ref{2a}) and (\ref{e6.3}), we have
\begin{eqnarray}\label{6d}
{S}_{\alpha }(E,F)=\left(\frac{|F|}{\alpha }\right)^{\frac{1}{\alpha}}{R}_{\alpha }(E,F)
\end{eqnarray} 
for $0<\alpha <n$. We can get the same result for $\alpha >n$ by (\ref{e6a}) and dual mixed volume inequality (\ref{2b}) with the same method.
\end{proof}
\begin{lem}\label{L6.2}
Let $E,F \subseteq \mathbb{R}^n$ be convex. For $-1<\alpha<0$, 
we have
\begin{eqnarray}\label{6.12}
{\Pi }_{2,-}^{*,-\frac{\alpha }{2} }(E,F)=\left(\frac{|F|}{|\alpha| }\right)^{\frac{1}{\alpha}}{R}_{\alpha }(E,F).
\end{eqnarray}
\end{lem}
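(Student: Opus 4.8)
The plan is to compute the radial function of ${\Pi }_{2,-}^{*,-\frac{\alpha }{2} }(E,F)$ directly from its definition (\ref{67}) and to recognize it, up to a multiplicative constant, as the radial function (\ref{66}) of ${R}_{\alpha }(E,F)$. Throughout I assume $F\subseteq E$, which is what makes ${R}_{\alpha }(E,F)$ meaningful in the first place. First I would fix $\xi\in\mathbb{S}^{n-1}$ and note that, because indicator functions take only the values $0$ and $1$, the negative part in (\ref{67}) satisfies ${\left(1_{E}(x+t\xi)-1_{F}(x)\right)}_{-}=1$ exactly when $x\in F$ and $x+t\xi\notin E$, and vanishes otherwise. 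Hence, for every $t>0$,
\begin{eqnarray*}
\int_{\mathbb{R}^n}{\left(1_{E}(x+t\xi)-1_{F}(x)\right)}_{-}dx=|\{x\in F:\ x+t\xi\notin E\}|.
\end{eqnarray*}

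Since the integrand is nonnegative, combining this with (\ref{67}) and Fubini's theorem gives
\begin{eqnarray*}
{{\rho}_{ {\Pi }_{2,-}^{*,-\frac{\alpha }{2} }(E,F)} (\xi)}^{\alpha }&=&\int_{0}^{\infty}t^{\alpha-1}\int_{F}1_{\{x+t\xi\notin E\}}\,dx\,dt\\
&=&\int_{F}\int_{0}^{\infty}t^{\alpha-1}1_{\{x+t\xi\notin E\}}\,dt\,dx.
\end{eqnarray*}
For $x$ in the interior of $E$ -- hence for almost every $x\in F$, since $F\cap\partial E$ is Lebesgue null -- convexity of $E$ (a convex body) gives $\{t\ge 0:\ x+t\xi\in E\}=[0,{\rho}_{E-x}(\xi)]$ with $0<{\rho}_{E-x}(\xi)<\infty$, so the inner integrand equals $1$ precisely for $t>{\rho}_{E-x}(\xi)$; and since $\alpha<0$ the inner integral converges at infinity, with
\begin{eqnarray*}
\int_{{\rho}_{E-x}(\xi)}^{\infty}t^{\alpha-1}\,dt=-\frac{{{\rho}_{E-x}(\xi)}^{\alpha}}{\alpha}=\frac{{{\rho}_{E-x}(\xi)}^{\alpha}}{|\alpha|}.
\end{eqnarray*}
Substituting this and invoking (\ref{66}) I would arrive at
\begin{eqnarray*}
{{\rho}_{ {\Pi }_{2,-}^{*,-\frac{\alpha }{2} }(E,F)} (\xi)}^{\alpha }=\frac{1}{|\alpha|}\int_{F}{{\rho}_{E-x} (\xi)}^{\alpha }\,dx=\frac{|F|}{|\alpha|}\,{{\rho}_{{R}_{\alpha }(E,F)} (\xi)}^{\alpha },
\end{eqnarray*}
and taking $\alpha$-th roots (all quantities are positive) yields ${\rho}_{ {\Pi }_{2,-}^{*,-\frac{\alpha }{2} }(E,F)}(\xi)={\left(|F|/|\alpha|\right)}^{1/\alpha}{\rho}_{{R}_{\alpha }(E,F)}(\xi)$ for all $\xi\in\mathbb{S}^{n-1}$, which is exactly (\ref{6.12}), since a star-shaped set is determined by its radial function.

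I do not expect a genuine obstacle here. The only real care is measure-theoretic: Fubini's theorem applies because the integrand is nonnegative, the null set $F\cap\partial E$ may be discarded, and finiteness of the integrals is ensured by the Gardner--Zhang result \cite{GZ98} that for $\alpha>-1$ the set ${R}_{\alpha }(E,F)$ has a genuine radial function, finite almost everywhere. As an alternative I could mimic the proof of Lemma \ref{L6.1}, first establishing $\widetilde{V}_{\alpha}\bigl(K,{\Pi }_{2,-}^{*,-\frac{\alpha }{2} }(E,F)\bigr)=\frac{|F|}{|\alpha|}\widetilde{V}_{\alpha}\bigl(K,{R}_{\alpha }(E,F)\bigr)$ for every star body $K$ and then fixing the constant by applying a dual mixed volume inequality in both directions; but for $-1<\alpha<0$ this route requires a dual mixed volume inequality of a type not recorded in Section \ref{s2}, so the direct computation above is preferable.
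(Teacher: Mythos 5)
Your proof is correct, and it takes a genuinely different and more direct route than the paper's. The paper keeps the auxiliary star-shaped set $K$ throughout: it computes $\int\!\int (1_E(x)-1_F(y))_-\,\|x-y\|_K^{-(n-\alpha)}\,dx\,dy$ in two ways (once via $\int_F\int_{E^c-y}$ and polar coordinates, yielding $\tfrac{|F|}{|\alpha|}n\widetilde V_\alpha(K,R_\alpha(E,F))$, once via the definition \eqref{67}, yielding $n\widetilde V_\alpha(K,\Pi_{2,-}^{*,-\alpha/2}(E,F))$), then substitutes $K$ equal to each of the two bodies and squeezes with a dual mixed volume inequality in both directions to identify the dilation factor. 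Your computation dispenses with $K$: you unwind \eqref{67} directly by Tonelli, observe that $(1_E(x+t\xi)-1_F(x))_- = 1_F(x)\,1_{E^c}(x+t\xi)$, and integrate $t^{\alpha-1}$ over $t>\rho_{E-x}(\xi)$ to show the $\alpha$-th powers of the two radial functions agree \emph{pointwise} up to the constant $|F|/|\alpha|$, whence \eqref{6.12} follows immediately. The underlying integral is the same, but your route avoids the dual mixed volume machinery entirely — which, as you correctly flag, is advantageous here: the paper invokes \eqref{2b} for $-1<\alpha<0$, whereas that inequality is recorded in Section \ref{s2} only for $\alpha>n$ (it does extend to $\alpha<0$ by the same reverse H\"older argument, but the extension is nowhere stated). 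Your measure-theoretic caveats (Tonelli for non-negative integrands, $F\cap\partial E$ Lebesgue-null, $\{t\ge0:x+t\xi\in E\}=[0,\rho_{E-x}(\xi)]$ for interior $x$ by convexity, convergence at $\infty$ because $\alpha<0$) are precisely the right ones.
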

\begin{proof}
For $-1<\alpha<0$, by the polar coorcoordinates transformation, Fubini’s theorem, (\ref{66}) and (\ref{2.1a}), we obtain
\begin{eqnarray}\label{ee6.1}
\int_{\mathbb{R}^n}\int_{\mathbb{R}^n}\frac{{(1_{E}(x)-1_{F}(y))}_{- }}{{\left \| x-y\right \|}_{K }^{n-\alpha}}dxdy&=&\int_{F}\int_{E^{c}}\frac{1}{{\left \| x-y\right \|}_{K }^{n-\alpha}}dxdy\nonumber\\
&=& \int_{F}\int_{E^{c}-y}\frac{1}{{{\left \| z\right \|}_{K }^{n-\alpha}}}dzdy\nonumber\\
&=&\int_{F}\int_{\mathbb{S}^{n-1}}\int_{{{\rho}_{E-y} (\xi)}}^{\infty }{r}^{\alpha-1}{{\rho}_{K} (\xi)}^{n-\alpha }drd\xi dy \nonumber\\
&=&\frac{|F|}{-\alpha }\int_{\mathbb{S}^{n-1}}{{\rho}_{K} (\xi)}^{n-\alpha }\frac{1}{|F|}\int_{F}{{\rho}_{E-y} (\xi)}^{\alpha }dyd\xi\nonumber\\
&=&\frac{|F|}{|\alpha |}n\widetilde{V}_{\alpha} (K,{R}_{\alpha }(E,F)).
\end{eqnarray} 
Moreover, by the polar coordinates transformation, Fubini’s theorem, (\ref{67}) and (\ref{2.1a}), we can see that
\begin{eqnarray}\label{ee6.2}
\int_{\mathbb{R}^n}\int_{\mathbb{R}^n}\frac{{(1_{E}(x)-1_{F}(y))}_{- }}{{\left \| x-y\right \|}_{K }^{n-\alpha}}dxdy&=&\int_{\mathbb{R}^n}\int_{\mathbb{R}^n}\frac{{(1_{E}(y+z)-1_{F}(y))}_{- }}{{\left \| z\right \|}_{K }^{n-\alpha}}dzdy\nonumber\\
&=&\int_{\mathbb{R}^n}\int_{\mathbb{S}^{n-1}}\int_{0}^{\infty }\frac{{(1_{E}(y+t\xi)-1_{F}(y))}_{- }}{{\left \|t\xi\right \|}_{K }^{n-\alpha}}{t}^{n-1}dtd\xi dy\nonumber\\
&=&\int_{\mathbb{S}^{n-1}}{{\rho}_{K} (\xi)}^{n-\alpha}\int_{0}^{\infty }{t}^{\alpha-1}\int_{\mathbb{R}^n}{(1_{E}(y+t\xi)-1_{F}(y))}_{-}dydtd\xi\nonumber\\
&=&\int_{\mathbb{S}^{n-1}}{{\rho}_{K} (\xi)}^{n-\alpha}{{\rho}_{{\Pi}_{2,-}^{*,-\frac{\alpha }{2} }} (\xi)}^{\alpha}d\xi\nonumber\\
&=&n\widetilde{V}_{\alpha}(K,{\Pi }_{2,-}^{*,-\frac{\alpha }{2} }(E,F)).
\end{eqnarray} 
By (\ref{ee6.1}) and (\ref{ee6.2}), we have
\begin{eqnarray}\label{6e}
\widetilde{V}_{\alpha}(K,{\Pi }_{2,-}^{*,-\frac{\alpha }{2} }(E,F))=\frac{|F|}{|\alpha |}\widetilde{V}_{\alpha} (K,{R}_{\alpha }(E,F)).
\end{eqnarray} 
Let $K={\Pi }_{2,-}^{*,-\frac{\alpha }{2} }(E,F)$ in (\ref{6e}), and by dual mixed volume inequality (\ref{2b}), we obtain that
\begin{eqnarray*}
|{\Pi }_{2,-}^{*,-\frac{\alpha }{2} }(E,F)|=\frac{|F|}{|\alpha| }\widetilde{V}_{\alpha} ({\Pi }_{2,-}^{*,-\frac{\alpha }{2} }(E,F),{R}_{\alpha }(E,F))\geq  \frac{|F|}{|\alpha |}|{\Pi }_{2,-}^{*,-\frac{\alpha }{2} }(E,F)|^{\frac{n-\alpha}{n}}|{R}_{\alpha }(E,F)|^{\frac{\alpha}{n}}, 
\end{eqnarray*} 
and thus
\begin{eqnarray}\label{6f}
|{\Pi }_{2,-}^{*,-\frac{\alpha }{2} }(E,F)|^{\frac{\alpha}{n}}\geq \frac{|F|}{|\alpha |}|{R}_{\alpha }(E,F)|^{\frac{\alpha}{n}}.
\end{eqnarray} 
Let $K={R}_{\alpha }(E,F) $ in (\ref{6e}), and by dual mixed volume inequality (\ref{2b}), we have
\begin{eqnarray*}
\frac{|F|}{|\alpha| }|{R}_{\alpha }(E,F)|=\widetilde{V}_{\alpha} ({R}_{\alpha }(E,F),{\Pi }_{2,-}^{*,-\frac{\alpha }{2} }(E,F))\geq |{R}_{\alpha }(E,F)|^{\frac{n-\alpha}{n}}|{\Pi }_{2,-}^{*,-\frac{\alpha }{2} }(E,F)|^{\frac{\alpha}{n}},
\end{eqnarray*} 
and thus
\begin{eqnarray}\label{6g}
\frac{|F|}{|\alpha| }|{R}_{\alpha }(E,F)|^{\frac{\alpha}{n}} \geq |{\Pi }_{2,-}^{*,-\frac{\alpha }{2} }(E,F)|^{\frac{\alpha}{n}}.
\end{eqnarray} 
By (\ref{6f}) and (\ref{6g}), we can see that
\begin{eqnarray}\label{eee6.3}
|{\Pi }_{2,-}^{*,-\frac{\alpha }{2} }(E,F)|^{\frac{\alpha}{n}}=\frac{|F|}{|\alpha| }|{R}_{\alpha }(E,F)|^{\frac{\alpha}{n}}.
\end{eqnarray}
By the equality condition of dual mixed volume inequality (\ref{2b}) and (\ref{eee6.3}), we have
\begin{eqnarray}\label{6h}
{\Pi }_{2,-}^{*,-\frac{\alpha }{2} }(E,F)=\left(\frac{|F|}{|\alpha| }\right)^{\frac{1}{\alpha}}{R}_{\alpha }(E,F)
\end{eqnarray} 
for $-1<\alpha <0$.
\end{proof}
By Lemma \ref{L6.1} and Lemma \ref{L6.2}, for non-zero, non-negative functions $f\in {L}^{2} (\mathbb{R}^n)$ and $h\in {L}^{2} (\mathbb{R}^n)$, we can define radial mean bodies ${R}_{\alpha }(f,h)$ of functions $f$ and $h$ as follows
\begin{eqnarray}\label{7a}
	{R}_{\alpha }(f,h)=\left(\frac{\alpha}{\int_{\mathbb{R}^n} fhdx}\right)^{\frac{1}{\alpha}}{S}_{\alpha }(f,h)
\end{eqnarray}
for $\alpha > 0$, while for $-1<\alpha<0$,
\begin{eqnarray}\label{7b}
	{R}_{\alpha }(f,h)=\left(\frac{|\alpha|}{{\int_{\mathbb{R}^n} }fhdx }\right)^{\frac{1}{\alpha}}{\Pi }_{2,-}^{*,-\frac{\alpha }{2} }(f,h).
\end{eqnarray}	
The definitions  (\ref{7a}) and  (\ref{7b}) remain consistent with (\ref{6.4}) and  (\ref{6.12}) when $f={1}_{E}$, $h={1}_{F}$ and convex bodies $F \subseteq E \subseteq \mathbb{R}^n$. Furthermore, from (\ref{7a}) and (\ref{3b}) we obtain
\begin{eqnarray}\label{7c}
	|{R}_{n}(f,h)|=\frac{{||f||}_{1}{||h||}_{1}}{{\int_{\mathbb{R}^n} }fhdx}
\end{eqnarray}	
for non-zero functions $f\in {L}^{2} (\mathbb{R}^n)$ and $h\in {L}^{2} (\mathbb{R}^n)$.
\section{Generalized reverse affine HLS inequalities}\label{7}

We establish Theorem \ref{1.3} in the setting of $\log$-concave functions and subsequently extend these results to the case of $s$-concave functions for $s>0$. 
\subsection{An auxiliary result}
The following result will be needed in our analysis. The equality case follows from a straightforward computation. 
\begin{lem}\cite[Lemma 2.6]{VD89}
Let $\omega:\left [ 0,\infty  \right )\to \left [ 0,\infty  \right ) $ be decreasing with
\begin{eqnarray}
0<\int_{0}^{\infty }t^{\alpha -1 }\omega \left ( t \right ) dt < \infty 
\end{eqnarray}
for $\alpha >0$. If $\varphi:\left [ 0,\infty  \right )\to \left [ 0,\infty  \right )$ is non-zero, with $\varphi \left ( 0 \right )  =0$, and such that $t\longmapsto \varphi \left ( t \right )$ and $t\longmapsto \varphi \left ( t \right )/t $ are increasing on $(0,\infty)$, then
\begin{eqnarray}
\zeta \left ( \alpha  \right )=\left ( \frac{\int_{0}^{\infty }t^{\alpha-1 }\omega \left ( \varphi \left ( t \right )  \right )dt   }{\int_{0}^{\infty }t^{\alpha-1 }\omega\left (t  \right )    dt   }  \right )^{\frac{1}{\alpha } }
\end{eqnarray}
is a decreasing function of $\alpha$ on $(0,\infty)$. Moreover, $\zeta$ is constant on $(0,\infty)$ if $\varphi \left ( t \right ) =\lambda t$ on $\left [ 0,\infty \right )$ for some $\lambda>0$.  
\end{lem}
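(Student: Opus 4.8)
The plan is to linearize the two integrals by the layer-cake formula, which turns the statement into a claim about a pair of monotone functions, and then to prove that claim by combining Chebyshev's correlation inequality with Jensen's inequality. \emph{Step 1 (layer-cake reduction).} Since $\omega$ is nonincreasing, $\omega(s)=\int_0^\infty\mathbf 1_{\{\omega>u\}}(s)\,du$ and $\{\omega>u\}=[0,G(u))$, where $G(u):=\sup\{s\ge 0:\omega(s)>u\}$ is itself nonincreasing; by Fubini, $\int_0^\infty t^{\alpha-1}\omega(t)\,dt=\tfrac1\alpha\int_0^\infty G(u)^{\alpha}\,du$. Applying the same device to $\omega(\varphi(t))$, and using that $\varphi$ is increasing with $\varphi(0)=0$ (so $\{t:\varphi(t)<G(u)\}=[0,\psi(G(u)))$ with $\psi:=\varphi^{-1}$), gives $\int_0^\infty t^{\alpha-1}\omega(\varphi(t))\,dt=\tfrac1\alpha\int_0^\infty H(u)^{\alpha}\,du$ with $H:=\psi\circ G$. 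Hence, assuming as we may that the defining integrals of $\zeta$ are finite,
\[
\zeta(\alpha)^{\alpha}=\frac{\int_0^\infty H(u)^{\alpha}\,du}{\int_0^\infty G(u)^{\alpha}\,du},
\]
and the two hypotheses on $\varphi$ translate into: $G$ and $H$ are nonincreasing; $\phi:=H/G=\Theta\circ G$ is nondecreasing, where $\Theta(s):=\psi(s)/s$ is nonincreasing because $\varphi(t)/t$ is increasing; and $s\mapsto s\,\Theta(s)=\psi(s)$ is nondecreasing because $\varphi$ is increasing. Everything is now a question about the two functions $G,H$ on $(0,\infty)$ against Lebesgue measure.

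\emph{Step 2 (reduction to an entropy--covariance inequality).} Since $\log\zeta(\alpha)=\tfrac1\alpha\bigl(\log\!\int H^{\alpha}-\log\!\int G^{\alpha}\bigr)$ and $\alpha\mapsto\int H^{\alpha},\int G^{\alpha}$ are smooth where finite, a direct computation shows that $\tfrac{d}{d\alpha}\log\zeta(\alpha)\le 0$ is equivalent to $\Xi(H^{\alpha})\le\Xi(G^{\alpha})$, where $\Xi(f):=\bigl(\int f\log f\bigr)\big/\bigl(\int f\bigr)-\log\!\int f$, all integrals being over $(0,\infty)$. Writing $H=G\phi$, introducing the probability measure $d\rho:=G^{\alpha}\,du\big/\!\int G^{\alpha}\,du$, and expanding $\log H=\log G+\log\phi$, one obtains
\[
\Xi(H^{\alpha})-\Xi(G^{\alpha})=\alpha\,\frac{\mathrm{Cov}_\rho(\phi^{\alpha},\log G)}{\mathbb{E}_\rho[\phi^{\alpha}]}+\frac{\mathrm{Ent}_\rho(\phi^{\alpha})}{\mathbb{E}_\rho[\phi^{\alpha}]},
\]
with $\mathrm{Ent}_\rho(g):=\mathbb{E}_\rho[g\log g]-\mathbb{E}_\rho[g]\log\mathbb{E}_\rho[g]\ge 0$. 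As $\phi^{\alpha}$ is nondecreasing and $\log G$ nonincreasing in $u$, Chebyshev's correlation inequality makes the first term $\le 0$, while the second is $\ge 0$; so everything comes down to the core estimate $-\alpha\,\mathrm{Cov}_\rho(\phi^{\alpha},\log G)\ge\mathrm{Ent}_\rho(\phi^{\alpha})$.

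\emph{Step 3 (the core estimate and the constancy case).} Put $Y:=\log G$, so that $\phi^{\alpha}=e^{-m(Y)}$ with $m(y):=-\alpha\log\Theta(e^{y})$; the hypothesis ``$\Theta$ nonincreasing'' makes $m$ nondecreasing, and ``$s\,\Theta(s)$ nondecreasing'' makes $m$ $\alpha$-Lipschitz, i.e.\ $0\le m'\le\alpha$. Split $\alpha Y=\bigl(\alpha Y-m(Y)\bigr)+m(Y)=:Z+V$ with $V=m(Y)$; both $Z$ and $V$ are nondecreasing functions of $Y$. Then
\[
-\alpha\,\mathrm{Cov}_\rho(e^{-V},Y)=-\mathrm{Cov}_\rho(e^{-V},Z)-\mathrm{Cov}_\rho(e^{-V},V),
\]
where the first term on the right is $\ge 0$ by Chebyshev (since $e^{-V}$ is nonincreasing and $Z$ nondecreasing in $Y$), and a one-line computation gives $-\mathrm{Cov}_\rho(e^{-V},V)-\mathrm{Ent}_\rho(e^{-V})=\mathbb{E}_\rho[e^{-V}]\bigl(\mathbb{E}_\rho[V]+\log\mathbb{E}_\rho[e^{-V}]\bigr)\ge 0$ by Jensen applied to the convex function $x\mapsto e^{-x}$. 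Adding these two bounds yields $-\alpha\,\mathrm{Cov}_\rho(\phi^{\alpha},\log G)\ge\mathrm{Ent}_\rho(\phi^{\alpha})$, hence $\zeta$ is nonincreasing on $(0,\infty)$. Finally, if $\varphi(t)=\lambda t$, the substitution $s=\lambda t$ in the numerator gives $\int_0^\infty t^{\alpha-1}\omega(\lambda t)\,dt=\lambda^{-\alpha}\int_0^\infty s^{\alpha-1}\omega(s)\,ds$, so $\zeta(\alpha)\equiv 1/\lambda$.

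The crux I expect is the core estimate $-\alpha\,\mathrm{Cov}_\rho(\phi^{\alpha},\log G)\ge\mathrm{Ent}_\rho(\phi^{\alpha})$: \emph{both} monotonicity hypotheses on $\varphi$ are genuinely used — one to force the exponent $m$ to be monotone, the other to force it to be $\alpha$-Lipschitz so that $\alpha Y-m(Y)$ remains monotone — and crude bounds such as Jensen or the Lyapunov inequality applied directly to $\phi$ point the wrong way; it is the splitting $\alpha Y=Z+V$ that lets the two hypotheses cooperate. The remaining points (finiteness of the moments and differentiation under the integral sign for $\int G^{\alpha}$, $\int H^{\alpha}$) are routine given the standing assumption $0<\int_0^\infty t^{\alpha-1}\omega(t)\,dt<\infty$.
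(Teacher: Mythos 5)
The paper does not prove this lemma; it is cited to Milman--Pajor \cite{VD89} (and the classical argument there, like the closely related [JM22, Lemma 17], is by \emph{single crossing}: one normalizes $\varphi$ by a dilation so that $\zeta(\alpha)=1$, notes that $\varphi(t)/t$ increasing forces $\varphi(t)-t$ to change sign at most once, hence so does $\omega(\varphi(t))-\omega(t)$, and then compares $\int t^{\alpha-1}(\cdots)$ with $\int t^{\beta-1}(\cdots)$ via the extra increasing weight $t^{\beta-\alpha}$). Your proof is correct, modulo the routine integrability and differentiability checks you flag, which indeed follow from the standing hypothesis that $0<\int_0^\infty t^{\alpha-1}\omega(t)\,dt<\infty$ holds for \emph{every} $\alpha>0$ (so one has finite, locally uniformly bounded moments near any fixed $\alpha_0$, justifying both the layer-cake rewriting $\zeta(\alpha)^\alpha=\int H^\alpha/\int G^\alpha$ and the differentiation under the integral).

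Your route is genuinely different from the cited one: instead of comparing two exponents $\alpha<\beta$ directly through a sign-change argument, you work infinitesimally, computing $(\log\zeta)'$ and isolating the obstruction as the single estimate $-\alpha\,\mathrm{Cov}_\rho(\phi^\alpha,\log G)\ge\mathrm{Ent}_\rho(\phi^\alpha)$, which you prove by the clean splitting $\alpha Y=(\alpha Y-m(Y))+m(Y)$, Chebyshev's correlation inequality for the first summand, and Jensen (convexity of $x\mapsto e^{-x}$) for the second. The two hypotheses on $\varphi$ are used in a transparent, decoupled way: ``$\varphi(t)/t$ increasing'' gives monotonicity of $m$ (needed for the Jensen and for the sign of the covariance), while ``$\varphi$ increasing'' gives the $\alpha$-Lipschitz bound on $m$ (needed to keep $\alpha Y-m(Y)$ monotone, hence to keep the residual Chebyshev term the right sign). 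The single-crossing proof is shorter and more elementary, and gives slightly more at a glance (it localizes where the comparison is tight); your proof is longer but exposes the statement as a covariance--entropy inequality for a pair of comonotone profiles, which is a reusable and arguably more conceptual viewpoint. The constancy check at $\varphi(t)=\lambda t$ is correct in both.

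Two small points of hygiene worth adding if you write this up: (i) when $\varphi$ vanishes on an initial interval $[0,a]$ (allowed under the stated hypotheses), $\varphi^{-1}$ should be taken as the right-continuous generalized inverse $\psi(s)=\sup\{t:\varphi(t)<s\}$, which is nondecreasing and still satisfies $\psi(s)/s$ nonincreasing, so nothing in Step~1 breaks; and (ii) the ``$\alpha$-Lipschitz'' claim for $m$ is best stated directly in difference-quotient form, $0\le m(y_2)-m(y_1)\le \alpha(y_2-y_1)$ for $y_1<y_2$, since $\Theta$ need not be differentiable; both inequalities come from $\Theta$ nonincreasing and $s\Theta(s)=\psi(s)$ nondecreasing exactly as you indicate.
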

\begin{lem}\cite[Lemma 17]{JM22}\label{7bb}
Let $\omega:\left [ 0,\infty  \right )\to \left [ 0,\infty  \right ) $ be decreasing with
\begin{eqnarray}
0<\int_{0}^{\infty }t^{\alpha -1 }\omega \left ( t \right ) dt < \infty 
\end{eqnarray}
for $\alpha >0$ and 
\begin{eqnarray}
0<\int_{0}^{\infty }t^{\alpha -1 }\left ( \omega \left ( 0 \right )- \omega \left ( t \right ) \right )  dt < \infty
\end{eqnarray}
for $-1<\alpha  <0$. If $\varphi:\left [ 0,\infty  \right )\to \left [ 0,\infty  \right )$ is non-zero, with $\varphi \left ( 0 \right )  =0$, and such that $t\longmapsto \varphi \left ( t \right )$ and $t\longmapsto \varphi \left ( t \right )/t $ are increasing on $(0,\infty)$, then
\begin{eqnarray}
\zeta  \left ( \alpha  \right )=\left\{\begin{matrix}
\left ( \frac{\int_{0}^{\infty }t^{\alpha-1 }\omega \left ( \varphi \left ( t \right )  \right )dt   }{\int_{0}^{\infty }t^{\alpha-1 }\omega\left (t  \right )    dt   }  \right )^{\frac{1}{\alpha } }
& for\; \alpha >0\\\exp \left ( \int_{0}^{\infty}\frac{\omega \left ( \varphi \left ( t \right )  \right )-\omega \left ( t \right )  }{t\omega\left ( 0 \right ) } dt  \right ) &for \; \alpha=0\\
\left ( \frac{\int_{0}^{\infty }t^{\alpha-1 }\left ( \omega \left ( \varphi \left ( t \right )  \right )-\omega \left ( 0 \right ) \right )  dt   }{\int_{0}^{\infty }t^{\alpha-1 }\left ( \omega\left (t  \right )-\omega \left ( 0 \right )  \right )     dt   }  \right )^{\frac{1}{\alpha } } &\;\;\;\;\;\;\;\;for\;-1<\alpha <0
\end{matrix}\right.\nonumber
\end{eqnarray}
is a continuous, decreasing function of $\alpha$ on $(-1,\infty)$. Moreover, $\zeta$ is constant on $(-1,\infty)$ if $\varphi \left ( t \right ) =\lambda t$ on $\left [ 0,\infty \right )$ for some $\lambda>0$.  
\end{lem}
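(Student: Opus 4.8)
The plan is to prove the three assertions -- that $\zeta$ is decreasing on $(0,\infty)$, decreasing on $(-1,0)$, and continuous at $0$ -- separately, and then to glue them. Throughout one may assume $\omega\not\equiv 0$, and it is worth recording at the outset that the hypothesis $\int_0^\infty t^{\alpha-1}\omega(t)\,dt<\infty$ for \emph{every} $\alpha>0$ forces $\lim_{t\to\infty}\omega(t)=0$ (otherwise the integral diverges near infinity); this, together with $\varphi(0)=0$ and the fact that $t\mapsto\varphi(t)/t$ increasing and $\varphi$ non-zero force $\varphi(t)\to\infty$, will be used in the limiting arguments. On $(0,\infty)$ the statement (monotonicity, and constancy when $\varphi(t)=\lambda t$) is precisely the previous lemma, so nothing new is needed there.

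On $(-1,0)$ I would run the single-crossing comparison underlying the previous lemma, applied to $\bar\omega:=\omega(0)-\omega\ge 0$, which is increasing with $\bar\omega(0)=0$ and $\bar\omega(\infty)=\omega(0)$. After the change of variables $s=ct$ one checks that $\zeta(\alpha)^{\alpha}=G(\alpha)/g(\alpha)$ with $G(\alpha)=\int_0^\infty t^{\alpha-1}\bar\omega(\varphi(t))\,dt$ and $g(\alpha)=\int_0^\infty t^{\alpha-1}\bar\omega(t)\,dt$, both finite and positive for $-1<\alpha<0$ by hypothesis (finiteness of $G$ near $0$ follows from $\varphi(t)\le\varphi(1)t$ for $t\le 1$). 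Fix $-1<\alpha_1<\alpha_2<0$ and put $c=\zeta(\alpha_1)^{-1}$; monotonicity of $\varphi(t)/t$ together with the bounds $c_0 t\le\varphi(t)\le c_\infty t$ (with $c_0=\inf\varphi(t)/t$, $c_\infty=\sup\varphi(t)/t$) show $c$ lies in the closure of the range of $\varphi(t)/t$, so -- barring the trivial linear case -- there is $t_0$ with $\varphi(t)<ct$ for $t<t_0$ and $\varphi(t)>ct$ for $t>t_0$. Since $\bar\omega$ is increasing, $\Delta(t):=\bar\omega(\varphi(t))-\bar\omega(ct)$ is $\le 0$ on $(0,t_0)$ and $\ge 0$ on $(t_0,\infty)$, while the choice of $c$ gives $\int_0^\infty t^{\alpha_1-1}\Delta(t)\,dt=0$. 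Writing $t^{\alpha_2-1}=t^{\alpha_1-1}t^{\alpha_2-\alpha_1}$ and comparing $t^{\alpha_2-\alpha_1}$ with $t_0^{\alpha_2-\alpha_1}$ on either side of $t_0$ yields $t^{\alpha_2-\alpha_1}\Delta(t)\ge t_0^{\alpha_2-\alpha_1}\Delta(t)$ pointwise, hence $\int_0^\infty t^{\alpha_2-1}\Delta(t)\,dt\ge 0$, i.e.\ $\zeta(\alpha_2)^{\alpha_2}\ge\zeta(\alpha_1)^{\alpha_2}$; since $\alpha_2<0$ makes $x\mapsto x^{\alpha_2}$ decreasing, this gives $\zeta(\alpha_2)\le\zeta(\alpha_1)$, with equality only when $\varphi(t)=ct$.

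For continuity at $\alpha=0$ I would compute the one-sided limits by rescaling. For $\alpha>0$, the substitution $u=t^{\alpha}$ gives $\alpha\int_0^\infty t^{\alpha-1}\omega(t)\,dt=\int_0^\infty\omega(u^{1/\alpha})\,du$, which tends to $\omega(0)$ as $\alpha\to0^+$ because $u^{1/\alpha}\to 0$ for $u<1$ and $u^{1/\alpha}\to\infty$ for $u>1$ while $\omega(\infty)=0$; likewise $\alpha\int_0^\infty t^{\alpha-1}\omega(\varphi(t))\,dt\to\omega(0)$. Subtracting and undoing the substitution, $\int_0^\infty t^{\alpha-1}(\omega(\varphi(t))-\omega(t))\,dt\to\int_0^\infty\frac{\omega(\varphi(t))-\omega(t)}{t}\,dt$, so that
\[
\log\zeta(\alpha)=\frac{1}{\alpha}\log\frac{\int_0^\infty t^{\alpha-1}\omega(\varphi(t))\,dt}{\int_0^\infty t^{\alpha-1}\omega(t)\,dt}\longrightarrow\frac{1}{\omega(0)}\int_0^\infty\frac{\omega(\varphi(t))-\omega(t)}{t}\,dt=\log\zeta(0)
\]
as $\alpha\to0^+$. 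The same computation with $\bar\omega$ in place of $\omega$ -- where now the roles of $\{u<1\}$ and $\{u>1\}$ swap because $t\mapsto t^{\alpha}$ is decreasing for $\alpha<0$, $\bar\omega(0)=0$, $\bar\omega(\infty)=\omega(0)$, and the factor $|\alpha|/\alpha=-1$ works in our favour -- gives $\log\zeta(\alpha)\to\log\zeta(0)$ as $\alpha\to0^-$ as well. Continuity at every other $\alpha$ is routine differentiation under the integral sign, and constancy of $\zeta$ when $\varphi(t)=\lambda t$ is immediate from $\zeta(\alpha)^{\alpha}=\lambda^{-\alpha}$ in each of the two ranges and the value $\zeta(0)=\lambda^{-1}$ of the limiting formula.

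The main obstacle is the continuity at $\alpha=0$: although the computation above is short to state, making each passage to the limit rigorous requires dominated-convergence arguments whose dominating functions are controlled precisely by the stated hypotheses -- finiteness of $\int_0^\infty t^{\alpha-1}\omega$ for $\alpha>0$, of $\int_0^\infty t^{\alpha-1}(\omega(0)-\omega)$ for $-1<\alpha<0$, and the (implicit) finiteness of $\int_0^\infty t^{-1}|\omega(\varphi(t))-\omega(t)|\,dt$ -- so the bookkeeping for the behaviour of $\omega$ near $0$ and near $\infty$, and for the endpoints $c_0,c_\infty$ of the range of $\varphi(t)/t$, is where the real work lies.
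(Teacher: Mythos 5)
This lemma is cited in the paper from \cite[Lemma 17]{JM22} and is not proved there, so there is no in-paper argument to compare against; what follows is an assessment of your reconstruction on its own merits.

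Your overall plan---invoke the Milman--Pajor lemma on $(0,\infty)$, run the same single-crossing comparison with $\bar\omega=\omega(0)-\omega$ on $(-1,0)$, and glue the two pieces by computing the one-sided limits at $\alpha=0$---is the natural way to extend the $\alpha>0$ result and is sound. The $(-1,0)$ portion is correct: your choice $c=\zeta(\alpha_1)^{-1}$ makes $\int_0^\infty t^{\alpha_1-1}\Delta(t)\,dt=0$ with $\Delta(t)=\bar\omega(\varphi(t))-\bar\omega(ct)$, the single crossing at $t_0$ follows from monotonicity of $\varphi(t)/t$ together with $\bar\omega$ increasing, the factor comparison $t^{\alpha_2-\alpha_1}\Delta(t)\ge t_0^{\alpha_2-\alpha_1}\Delta(t)$ is correct, and the final sign flip from $\alpha_2<0$ gives $\zeta(\alpha_2)\le\zeta(\alpha_1)$ as required. (The degenerate case in which $c$ falls outside the closed range of $\varphi(t)/t$ should be dispatched explicitly: then $\Delta$ has constant sign, so the normalisation $\int t^{\alpha_1-1}\Delta\,dt=0$ forces $\Delta=0$ a.e.\ and the desired inequality holds trivially.)

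The genuine gap is in the transition you label ``so that'' in the continuity computation. You correctly establish that $\alpha\int_0^\infty t^{\alpha-1}\omega(t)\,dt\to\omega(0)$ and $\alpha\int_0^\infty t^{\alpha-1}\omega(\varphi(t))\,dt\to\omega(0)$, and (granting dominated convergence, which you flag) that the \emph{difference} of the two integrals converges to $\int_0^\infty t^{-1}(\omega(\varphi(t))-\omega(t))\,dt$. But $\log\zeta(\alpha)=\frac1\alpha\log\bigl(A(\alpha)/B(\alpha)\bigr)$ is a $0\cdot\infty$ form, not a direct rearrangement of those two facts: you must write
\[
\log\frac{A(\alpha)}{B(\alpha)}=\log\Bigl(1+\frac{A(\alpha)-B(\alpha)}{B(\alpha)}\Bigr)\sim\frac{A(\alpha)-B(\alpha)}{B(\alpha)},
\]
note that $(A(\alpha)-B(\alpha))/B(\alpha)\to 0$ because $B(\alpha)\to\infty$ while $A-B$ stays bounded, and then use $\alpha B(\alpha)\to\omega(0)$ to obtain $\frac1\alpha\log(A/B)\sim\frac{A-B}{\alpha B}\to\frac{1}{\omega(0)}\int_0^\infty\frac{\omega(\varphi(t))-\omega(t)}{t}\,dt$. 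The same step is needed on the $\alpha\to 0^-$ side. As written, the proposal jumps over this expansion; everything else you flag as ``bookkeeping'' (the dominating functions built from $\varphi(t)\le\varphi(1)t$ on $(0,1]$, $\varphi(t)\ge\varphi(1)t$ on $[1,\infty)$, and the hypotheses at a fixed $\alpha_0>0$ and a fixed $\alpha_0\in(-1,0)$) does indeed go through once set down.

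A small additional remark: continuity at an interior point $\alpha\ne 0$ is not ``routine differentiation under the integral sign'' in general (you have no differentiability hypotheses on $\omega$ in $\alpha$); what you actually want is continuity of $\alpha\mapsto\int t^{\alpha-1}\omega(t)\,dt$ etc., which again follows from dominated convergence using the same two-sided dominating functions. This is minor but worth stating correctly.
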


\subsection{Results for log-concave functions.}
We begin with an elementary computation, requiring the following notation. For functions $f\in L^{2}\left ( \mathbb{R}^n  \right )$,  $h\in L^{2}\left ( \mathbb{R}^n  \right )$, and for any $y\in \mathbb{R}^n $, we define 
\begin{eqnarray}\label{e7.1}
\mathscr{G}\left ( f,h \right )\left ( y \right )=\int_{\mathbb{R}^n }f\left ( x+y \right )h\left ( x \right )dx.
\end{eqnarray}
The simplex $\bigtriangleup _{n}\subseteq \mathbb{R}^n $ is defined as the convex hull of the origin and the standard basis vectors $e_{1},\dots ,e_{n} $. Let $B^{n} _{1}=\left \{ \left ( x_{1},\dots ,x_{n}   \right ):\left | x_{1}  \right |+ \cdots+\left | x_{n}  \right |\le 1     \right \}$ and $\mathbb{R}^n _{+}=\left \{  \left ( x_{1},\dots ,x_{n}   \right )\in \mathbb{R}^n: x_{1},\dots ,x_{n} \ge 0 \right \} $.
\begin{lem}\cite[Lemma 18]{JM22}\label{7.3}
If $f\left ( x \right )=h\left ( x \right )=e^{-\left \| x \right \|_{\bigtriangleup_{n} }  }$ for $x\in \mathbb{R}^n$ , then
\begin{eqnarray}
\mathscr{G}\left ( f,h \right )\left ( y \right )=\frac{1}{2^{n} }e^{-\left \| y \right \|_{B^{n} _{1} }  }.\notag
\end{eqnarray}
for $y\in \mathbb{R}^n $.
\end{lem}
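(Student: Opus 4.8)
The plan is to reduce the $n$-dimensional integral defining $\mathscr{G}(f,h)(y)$ to a product of one-dimensional exponential integrals, using the explicit form of the gauge function of $\bigtriangleup_{n}$. First I would record that, since $\bigtriangleup_{n}=\{x\in\mathbb{R}^n: x_{1},\dots,x_{n}\ge 0,\ x_{1}+\cdots+x_{n}\le 1\}$, one has $\left\|x\right\|_{\bigtriangleup_{n}}=x_{1}+\cdots+x_{n}$ for $x\in\mathbb{R}^n_{+}$ and $\left\|x\right\|_{\bigtriangleup_{n}}=+\infty$ otherwise; hence $e^{-\left\|x\right\|_{\bigtriangleup_{n}}}=e^{-(x_{1}+\cdots+x_{n})}1_{\mathbb{R}^n_{+}}(x)$ for every $x\in\mathbb{R}^n$.

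Substituting $f(x)=h(x)=e^{-\left\|x\right\|_{\bigtriangleup_{n}}}$ into $\mathscr{G}(f,h)(y)=\int_{\mathbb{R}^n}f(x+y)h(x)\,dx$, the integrand is supported on $\{x: x\in\mathbb{R}^n_{+}\text{ and }x+y\in\mathbb{R}^n_{+}\}$, i.e.\ on the set where $x_{i}\ge \max\{0,-y_{i}\}$ for all $i$, and there it equals $e^{-\sum_{i}(x_{i}+y_{i})}e^{-\sum_{i}x_{i}}$, which factors over the coordinates. Thus
\[
\mathscr{G}(f,h)(y)=\prod_{i=1}^{n}e^{-y_{i}}\int_{\max\{0,-y_{i}\}}^{\infty}e^{-2x_{i}}\,dx_{i}=\prod_{i=1}^{n}\tfrac12\,e^{-y_{i}}e^{-2\max\{0,-y_{i}\}}.
\]

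It remains to run the short case distinction on the sign of each $y_{i}$: if $y_{i}\ge 0$ the $i$-th factor is $\tfrac12 e^{-y_{i}}$, while if $y_{i}<0$ it is $\tfrac12 e^{-y_{i}}e^{2y_{i}}=\tfrac12 e^{y_{i}}$, so in both cases it equals $\tfrac12 e^{-|y_{i}|}$. Multiplying over $i$ and using that $B^{n}_{1}$ is the $\ell^{1}$-ball, so that $\left\|y\right\|_{B^{n}_{1}}=|y_{1}|+\cdots+|y_{n}|$, yields $\mathscr{G}(f,h)(y)=2^{-n}e^{-\left\|y\right\|_{B^{n}_{1}}}$, which is the claim.

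There is essentially no serious obstacle. The only points requiring care are the bookkeeping of the domain of integration — correctly intersecting the supports coming from $f(x+y)$ and from $h(x)$ into the single lower limit $\max\{0,-y_{i}\}$ — and the (elementary) identification of $\left\|\cdot\right\|_{\bigtriangleup_{n}}$ and $\left\|\cdot\right\|_{B^{n}_{1}}$ with the appropriate $\ell^{1}$-type expressions. Everything else is the evaluation of $\int_{a}^{\infty}e^{-2t}\,dt=\tfrac12 e^{-2a}$.
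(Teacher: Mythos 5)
Your proof is correct and follows the natural direct computation: identify $e^{-\|x\|_{\triangle_n}}$ as $e^{-(x_1+\cdots+x_n)}1_{\mathbb{R}^n_+}(x)$, observe that the integrand factorizes coordinate-wise, evaluate the resulting one-dimensional integrals, and simplify using $\|y\|_{B^n_1}=\sum_i|y_i|$. The paper simply cites this result from Haddad and Ludwig without reproducing a proof, but your argument is exactly the expected one and has no gaps.
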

The following theorem shows set inclusion relation about radial mean bodies of functions.
\begin{thm}\label{5.3}
For non-zero, even, log-concave functions $f$, $h\in L^{2}\left ( \mathbb{R}^n  \right )$, the following inclusion relation holds:
\begin{eqnarray}
\frac{1}{\Gamma \left ( \beta +1 \right )^{\frac{1}{\beta } }  }R_{\beta }\left ( f,h \right )\subseteq \frac{1}{\Gamma \left ( \alpha  +1 \right )^{\frac{1}{\alpha } }  }R_{\alpha  }\left ( f,h \right )\notag
\end{eqnarray}
for $0< \alpha < \beta < \infty$. Equality is attained if $f\left ( x \right )=h\left ( x \right )=ae^{-\left \| x-x_{0}  \right \|_{\bigtriangleup}}$ for $x\in \mathbb{R}^n$, where $a > 0$, $x_{0}\in \mathbb{R}^n$ and $\bigtriangleup $ an $n$-dimensional simplex having a vertex at the origin. 
\end{thm}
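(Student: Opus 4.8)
The plan is to reduce the set inclusion to a pointwise comparison of radial functions of convex bodies, and then deduce that comparison from the one-variable monotonicity result Lemma \ref{7bb}, applied to a well-chosen pair $(\omega,\varphi)$. First I record the structural facts. Since $f,h$ are non-zero, even, log-concave and in $L^{2}(\mathbb{R}^n)$, they are bounded and lie in $L^{1}(\mathbb{R}^n)$, so by the Proposition in Section \ref{s5} the set $S_{\alpha}(f,h)$, and hence $R_{\alpha}(f,h)=(\alpha/\int_{\mathbb{R}^n}fh\,dx)^{1/\alpha}S_{\alpha}(f,h)$, is a convex body for every $\alpha>0$; moreover $g:=\mathscr{G}(f,h)=f\ast h$ is a bounded, even, integrable, log-concave function. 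Because $R_{\alpha}(f,h)$ is a convex body, the claimed inclusion $\Gamma(\beta+1)^{-1/\beta}R_{\beta}(f,h)\subseteq\Gamma(\alpha+1)^{-1/\alpha}R_{\alpha}(f,h)$ is equivalent to
\[
\frac{\rho_{R_{\beta}(f,h)}(\xi)}{\Gamma(\beta+1)^{1/\beta}}\le\frac{\rho_{R_{\alpha}(f,h)}(\xi)}{\Gamma(\alpha+1)^{1/\alpha}}\qquad\text{for every }\xi\in\mathbb{S}^{n-1},
\]
so I fix $\xi$ and work on the ray through $\xi$.

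For fixed $\xi$, set $\omega_{\xi}(t)=g(t\xi)/g(0)$, where $g(0)=\int_{\mathbb{R}^n}fh\,dx>0$. Since $g$ is even and log-concave, $t\mapsto g(t\xi)$ is even and log-concave, hence nonincreasing on $[0,\infty)$, with $\omega_{\xi}(0)=1$ and $\omega_{\xi}\not\equiv1$, and $\int_{0}^{\infty}t^{\alpha-1}\omega_{\xi}(t)\,dt<\infty$ by Lemma \ref{3.11}. From the definitions (\ref{1c}) and (\ref{7a}) one computes $\rho_{R_{\alpha}(f,h)}(\xi)^{\alpha}=\alpha\int_{0}^{\infty}t^{\alpha-1}\omega_{\xi}(t)\,dt$, while $\Gamma(\alpha+1)=\alpha\int_{0}^{\infty}t^{\alpha-1}e^{-t}\,dt$, so
\[
\frac{\rho_{R_{\alpha}(f,h)}(\xi)}{\Gamma(\alpha+1)^{1/\alpha}}=\left(\frac{\int_{0}^{\infty}t^{\alpha-1}\omega_{\xi}(t)\,dt}{\int_{0}^{\infty}t^{\alpha-1}e^{-t}\,dt}\right)^{1/\alpha}.
\]
I then apply Lemma \ref{7bb} with $\omega(t)=e^{-t}$ and $\varphi(t)=\varphi_{\xi}(t):=-\log\omega_{\xi}(t)=\log g(0)-\log g(t\xi)$, so that $e^{-\varphi_{\xi}(t)}=\omega_{\xi}(t)$ and the right-hand side above is exactly the quantity $\zeta(\alpha)$ of that lemma. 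The hypotheses are routine: $e^{-t}$ is decreasing with the required moment integrals finite; $\varphi_{\xi}(0)=0$; $\varphi_{\xi}$ is non-zero and nondecreasing because $\omega_{\xi}$ is nonincreasing; and $t\mapsto\varphi_{\xi}(t)/t$ is nondecreasing because $\varphi_{\xi}$ is a convex function of $t\ge0$ (log-concavity of $g$ makes $t\mapsto-\log g(t\xi)$ convex) vanishing at the origin. Lemma \ref{7bb} then yields that $\zeta$ is decreasing on $(0,\infty)$, which is precisely the displayed inequality for $0<\alpha<\beta$; since $\xi$ was arbitrary, the inclusion of convex bodies follows.

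For the equality statement I use that $R_{\alpha}$ is covariant under the full linear group — a change of variables in (\ref{1c}) together with the normalization in (\ref{7a}) gives $R_{\alpha}(f\circ\phi^{-1},h\circ\phi^{-1})=\phi\,R_{\alpha}(f,h)$ for all $\phi\in GL(n)$ — and that $R_{\alpha}(f,h)$ is unchanged under common translations of $f,h$ and under $f\mapsto af$, $h\mapsto ah$. Hence it suffices to check $f(x)=h(x)=e^{-\|x\|_{\bigtriangleup_{n}}}$, for which Lemma \ref{7.3} gives $g(y)=2^{-n}e^{-\|y\|_{B^{n}_{1}}}$, so that $\omega_{\xi}(t)=e^{-t\|\xi\|_{B^{n}_{1}}}$ and $\varphi_{\xi}(t)=\|\xi\|_{B^{n}_{1}}\,t$ is linear; the ``moreover'' clause of Lemma \ref{7bb} then makes $\zeta$ constant, i.e.\ $\Gamma(\alpha+1)^{-1/\alpha}R_{\alpha}(f,h)$ independent of $\alpha$, giving equality.

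The main obstacle is simply recognizing the right comparison: once one notices that normalizing by $\Gamma(\alpha+1)^{1/\alpha}=(\alpha\Gamma(\alpha))^{1/\alpha}$ turns $\rho_{R_{\alpha}(f,h)}(\xi)/\Gamma(\alpha+1)^{1/\alpha}$ into the Berwald-type quotient $\zeta(\alpha)$ with profile $\omega=e^{-t}$ and $\varphi_{\xi}=-\log\omega_{\xi}$, the theorem drops out of Lemma \ref{7bb}. What remains is bookkeeping: the passage to $L^{1}$ and log-concavity of $g$, the convexity of $\varphi_{\xi}$, the minor point that $\varphi_{\xi}$ may take the value $+\infty$ when $f,h$ have compact support (handled exactly as in \cite{JM22} with the convention $e^{-\infty}=0$), and the $GL(n)$-covariance of $R_{\alpha}(f,h)$ used in the equality case.
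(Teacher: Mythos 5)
Your argument is correct and coincides with the paper's own proof: both fix $\xi\in\mathbb{S}^{n-1}$, observe that $g=\mathscr{G}(f,h)$ is even and log-concave via Lemma \ref{2aa}, and apply Lemma \ref{7bb} with the exponential profile (your $\omega(t)=e^{-t}$ is the paper's $g(0)e^{-t}$ up to a harmless normalization) and $\varphi(t)=-\log\bigl(g(t\xi)/g(0)\bigr)$ to identify $\rho_{R_\alpha(f,h)}(\xi)/\Gamma(\alpha+1)^{1/\alpha}$ with the Berwald quotient $\zeta(\alpha)$, and both settle the equality case via Lemma \ref{7.3} and the $GL(n)$/translation covariance of $R_\alpha$. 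Your explicit verification of the hypotheses of Lemma \ref{7bb} (convexity of $\varphi_\xi$ from log-concavity of $g$, hence monotonicity of $\varphi_\xi(t)/t$) fills in details the paper leaves implicit, but the route is the same.
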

\begin{proof}
Since $f$, $h$ are even, the function $ g\left ( y \right ) =\mathscr{G}\left ( f,h \right )\left ( y \right ) $ is even. Furthermore, the log-concavity of $g$ follows immediately from Lemma \ref{2aa}, and consequently $g$ achieves its maximal value at $y=0$ and $g(0)=\int_{\mathbb{R}^n }f\left ( x \right )h\left ( x \right )dx$.

For a fixed $\xi \in \mathbb{S}^{n-1}$, we observe that the function $t\longmapsto g\left ( t\xi  \right )$ is positive, decreasing, log-concave. Using Lemma \ref{7bb} to $\omega \left ( t \right )=g\left ( 0 \right )e^{-t}$, $\varphi \left ( t \right )=-\log\left ( g\left ( t\xi  \right )/g\left ( 0 \right )   \right ) $ and by (\ref{1c}), (\ref{e7.1}) and (\ref{7a}), we have
\begin{eqnarray}
\zeta\left ( \alpha  \right )=\left ( \frac{\int_{0}^{\infty }t^{\alpha -1}g\left ( t\xi  \right )dt   }{\int_{0}^{\infty }t^{\alpha -1}g\left ( 0 \right )e^{-t}dt    }  \right )^{\frac{1}{\alpha } }=\frac{\rho _{R_{\alpha \left ( f,h \right ) } }\left ( \xi  \right )}{\Gamma \left ( \alpha +1 \right )^{\frac{1}{\alpha } }  } \notag
\end{eqnarray}
for $\alpha > 0  $. This establishes the inclution relation by Lemma \ref{7bb}. Equality in Lemma \ref{7bb} occurs precisely when $g\left ( t\xi  \right )=e^{-\lambda\left ( \xi  \right )t } $ for some function $\lambda:{\mathbb{S}}^{n-1}\to\infty$. Consequently, the equality case is obtained form Lemma \ref{7.3} combined with the  volume-preserving $GL\left ( n \right )$ and translation invariance and homogeneity properties of $  \left | R_{\alpha }  \left ( f,h \right )
\right |$.
\end{proof}
\noindent\textbf{Proof of Theorem \ref{1.3}}.
By $0<\alpha <n$ and Theorem \ref{5.3}, we can see that
\begin{eqnarray*}
	\frac{1}{\Gamma \left ( n +1 \right )^{\frac{1}{n } }  }R_{n }\left ( f,h \right )\subseteq \frac{1}{\Gamma \left ( \alpha  +1 \right )^{\frac{1}{\alpha } }  }R_{\alpha  }\left ( f,h \right ),
\end{eqnarray*}
there is a equality in the inequality if $f\left ( x \right )=h\left ( x \right )=ae^{-\left \| x-x_{0}  \right \|_{\bigtriangleup }  }$ for $x\in \mathbb{R}^n  $ with $a \ge 0,x_{0}\in \mathbb{R}^n $ and $\bigtriangleup $ an $n$-dimensional simplex having a vertex at the origin. 
Thus
\begin{eqnarray*}
	\left|\frac{1}{\Gamma \left ( n +1 \right )^{\frac{1}{n } }  }R_{n }\left ( f,h \right )\right|\le \left|\frac{1}{\Gamma \left ( \alpha  +1 \right )^{\frac{1}{\alpha } }  }R_{\alpha  }\left ( f,h \right )\right|.
\end{eqnarray*}
By (\ref{7c}) and (\ref{7a}), we have 
\begin{eqnarray*}
	\frac{1}{\Gamma \left ( n +1 \right)}\frac{{||f||}_{1}{||h||}_{1}}{{\int_{\mathbb{R}^n} }fhdx}
	\le \left|\frac{1}{\Gamma \left ( \alpha  +1 \right )^{\frac{1}{\alpha } }  }\left(\frac{\alpha}{\int_{\mathbb{R}^n} fhdx}\right)^{\frac{1}{\alpha}}{S}_{\alpha }(f,h)\right|.
\end{eqnarray*}
Now raise both sides to the $\frac{\alpha}{n}$-th power to get
\begin{eqnarray*}
	\left ( \left \| f \right \|_{1}\left \| h \right \|_{1}    \right )^{\frac{\alpha }{n} }\left ( \int_{\mathbb{R}^n }f\left ( x \right )h\left ( x \right )dx    \right )^{1-\frac{\alpha }{n} }\le \frac{\Gamma \left ( n+1 \right )^{\frac{\alpha }{n} }  }{\Gamma \left ( \alpha  \right ) }\left | S_{\alpha }\left ( f,h \right )   \right |^{\frac{\alpha }{n} }.
\end{eqnarray*}
We complete the proof of the first inequality of (\ref{1.333}).

For the second inequality of (\ref{1.333}), by H\"older’s inequality and $0<\alpha <n$, we have
\begin{eqnarray*}
	&&\left ( \left \| f \right \|_{1}\left \| h \right \|_{1}    \right )^{\frac{\alpha }{n+\alpha} }\left ( \int_{\mathbb{R}^n }f\left ( x \right )h\left ( x \right )dx    \right )^{\frac{n-\alpha }{n+\alpha} }\nonumber\\
	&=&\left ( {\left \| f \right \|_{1}}^{\frac{2\alpha }{n+\alpha}}\left ( \int_{\mathbb{R}^n }f\left ( x \right )h\left ( x \right )dx    \right )^{\frac{n-\alpha }{n+\alpha} }    \right )^{\frac{1 }{2} }\left ( {\left \| h \right \|_{1}}^{\frac{2\alpha }{n+\alpha}}\left ( \int_{\mathbb{R}^n }f\left ( x \right )h\left ( x \right )dx    \right )^{\frac{n-\alpha }{n+\alpha} }    \right )^{\frac{1 }{2} }\nonumber\\
	&\ge&\left ( \int_{\mathbb{R}^n }  f ^{\frac{2\alpha }{n+\alpha} }( fh )^{\frac{n-\alpha }{n+\alpha} }    dx\right )^{\frac{1 }{2} }\left ( \int_{\mathbb{R}^n }  h ^{\frac{2\alpha }{n+\alpha} }( fh )^{\frac{n-\alpha }{n+\alpha} }    dx\right )^{\frac{1 }{2} }.
\end{eqnarray*} 
Raising both sides to the $\frac{n+\alpha}{n}$-th power to get
\begin{eqnarray*}
	&&\left ( \left \| f \right \|_{1}\left \| h \right \|_{1}    \right )^{\frac{\alpha }{n} }\left ( \int_{\mathbb{R}^n }f\left ( x \right )h\left ( x \right )dx    \right )^{\frac{n-\alpha }{n} }\nonumber\\
	&\ge&\left ( \int_{\mathbb{R}^n }  f ^{\frac{2\alpha }{n+\alpha} }( fh )^{\frac{n-\alpha }{n+\alpha} }    dx\right )^{\frac{n+\alpha }{2n} }\left ( \int_{\mathbb{R}^n }  h ^{\frac{2\alpha }{n+\alpha} }( fh )^{\frac{n-\alpha }{n+\alpha} }    dx\right )^{\frac{n+\alpha }{2n} }\nonumber\\
	&=&\left ( \int_{\mathbb{R}^n }  ( f ^{\frac{n+\alpha }{2n} }h ^{\frac{n-\alpha }{2n} } )^{\frac{2n }{n+\alpha} }    dx\right )^{\frac{n+\alpha }{2n} }\left ( \int_{\mathbb{R}^n }  ( h ^{\frac{n+\alpha }{2n} }f ^{\frac{n-\alpha }{2n} } )^{\frac{2n }{n+\alpha} }    dx\right )^{\frac{n+\alpha }{2n} }\nonumber\\
	&=& {\left \| {f}^{\frac{n+\alpha }{2n}} {h}^{\frac{n-\alpha }{2n}}\right \|}_{\frac{2n }{n+\alpha}}{\left \| {h}^{\frac{n+\alpha }{2n}} {f}^{\frac{n-\alpha }{2n}}\right \|}_{\frac{2n }{n+\alpha}}. 
\end{eqnarray*} 
We complete the proof of the second inequality of the inequalities (\ref{1.333}). We can prove inequalities (\ref{1.12}) following the same method as the proof of inequalities (\ref{1.333}).
\qed

\subsection{Results for s-concave functions}\label{s7.3}
\begin{thm}\label{7cc} 
Let $B(\cdot, \cdot)$ denote Beta function and $s> 0$. For non-zero, even and $s$-concave functions $f$,$h\in L^{2}\left ( \mathbb{R}^n  \right )$, the following inclusion relation holds:
\begin{eqnarray}
	\frac{1}{\left ( \left ( n+\frac{2}{s}   \right )B\left ( \beta +1 ,n+\frac{2}{s} \right )  \right )^{\frac{1}{\beta } }  }R_{\beta  }\left ( f,h \right )\subseteq \frac{1}{\left ( \left ( n+\frac{2}{s}   \right )B\left ( \alpha +1 ,n+\frac{2}{s} \right )  \right )^{\frac{1}{\alpha  } }  }R_{\alpha }\left ( f,h \right ) \notag
\end{eqnarray}
for $0< \alpha < \beta < \infty $.
\end{thm}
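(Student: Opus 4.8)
The plan is to follow the proof of Theorem \ref{5.3}, replacing the exponential comparison function $\omega(t)=g(0)e^{-t}$ used there for log-concave data by the power function $\omega(t)=g(0)(1-t)_{+}^{\,n+2/s}$, which is the correct model density for functions of concavity $s/(ns+2)$.

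First I would set $g(y)=\mathscr{G}(f,h)(y)=\int_{\mathbb{R}^n}f(x+y)h(x)\,dx$; since $h$ is even, $g$ equals the convolution $f\ast h$, and since $f$ and $h$ are even, $g$ is even. A nonzero even $s$-concave function in $L^{2}(\mathbb{R}^n)$ has bounded support (a positive concave function on an unbounded convex set stays bounded below on a cone, which is incompatible with square-integrability), so $f,h\in L^{1}(\mathbb{R}^n)$ and Lemma \ref{2aa} applies: $g$ is $\sigma$-concave with $\sigma=s/(ns+2)$, so that $1/\sigma=n+2/s$. Moreover $g$ is even, compactly supported, and attains its maximum $g(0)=\int_{\mathbb{R}^n}f(x)h(x)\,dx>0$ at the origin (positivity holds because both supports contain a neighborhood of $0$). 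Hence, for each fixed $\xi\in\mathbb{S}^{n-1}$, the function $t\mapsto g(t\xi)$ is non-negative, decreasing on $[0,\infty)$, $\sigma$-concave, and compactly supported.

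Next I would apply Lemma \ref{7bb} with $\omega(t)=g(0)(1-t)_{+}^{\,n+2/s}$ and $\varphi(t)=1-\bigl(g(t\xi)/g(0)\bigr)^{\sigma}$. Then $\varphi(0)=0$; $\varphi$ is increasing since $g(t\xi)$ is decreasing; and, writing $\psi(t)=\bigl(g(t\xi)/g(0)\bigr)^{\sigma}$, the map $\psi$ is concave (a positive multiple of $g^{\sigma}$ restricted to the ray $\{t\xi:t\ge 0\}$), so $\varphi(t)/t=(\psi(0)-\psi(t))/t$ is increasing by the monotonicity of chord slopes of concave functions; also $\varphi\not\equiv 0$, because $g$ is not constant along $\xi$. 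Since $1-\varphi(t)=\psi(t)\in[0,1]$, we get $\omega(\varphi(t))=g(0)\psi(t)^{\,1/\sigma}=g(t\xi)$, whereas $\int_{0}^{\infty}t^{\alpha-1}\omega(t)\,dt=g(0)\,B(\alpha,\,n+2/s+1)<\infty$ for $\alpha>0$ (and the integrability hypothesis of Lemma \ref{7bb} for $-1<\alpha<0$ is checked in the same routine way). Consequently the function $\zeta$ of Lemma \ref{7bb} satisfies, for $\alpha>0$,
\[
\zeta(\alpha)^{\alpha}=\frac{\int_{0}^{\infty}t^{\alpha-1}g(t\xi)\,dt}{g(0)\,B(\alpha,\,n+2/s+1)}.
\]

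Finally, combining (\ref{1c}), (\ref{e7.1}) and (\ref{7a}) gives $\rho_{R_{\alpha}(f,h)}(\xi)^{\alpha}=(\alpha/g(0))\int_{0}^{\infty}t^{\alpha-1}g(t\xi)\,dt$, so that by the displayed formula and the Beta recursion $\alpha B(\alpha,m+1)=m\,B(\alpha+1,m)$ applied with $m=n+2/s$,
\[
\rho_{R_{\alpha}(f,h)}(\xi)^{\alpha}=\alpha\,B\bigl(\alpha,\,n+\tfrac{2}{s}+1\bigr)\,\zeta(\alpha)^{\alpha}=\bigl(n+\tfrac{2}{s}\bigr)B\bigl(\alpha+1,\,n+\tfrac{2}{s}\bigr)\,\zeta(\alpha)^{\alpha}.
\]
Thus $\zeta(\alpha)=\rho_{R_{\alpha}(f,h)}(\xi)\big/\bigl((n+\tfrac{2}{s})B(\alpha+1,n+\tfrac{2}{s})\bigr)^{1/\alpha}$, and the asserted inclusion for $0<\alpha<\beta<\infty$ is exactly the inequality $\zeta(\beta)\le\zeta(\alpha)$ for every $\xi$, which is the monotonicity conclusion of Lemma \ref{7bb}. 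The main obstacle I anticipate is the convex-geometric and integrability bookkeeping needed to make Lemma \ref{7bb} applicable -- compact support of $g$ so that all the integrals $\int_{0}^{\infty}t^{\alpha-1}g(t\xi)\,dt$ are finite, and monotonicity of $t\mapsto\varphi(t)/t$ -- together with verifying that the normalizing constant coming out of $\omega$ collapses, via the Beta recursion, to precisely $(n+\tfrac{2}{s})B(\alpha+1,n+\tfrac{2}{s})$; everything else runs parallel to Theorem \ref{5.3}.
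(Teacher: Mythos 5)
Your proof is correct and takes essentially the same route as the paper: set $g=\mathscr{G}(f,h)$, invoke Lemma \ref{2aa} to get $\sigma$-concavity of $g$ with $\sigma=s/(ns+2)$, and apply Lemma \ref{7bb} along each ray $\{t\xi:t\ge 0\}$ with the power-law model density, then read off the inclusion from the monotonicity of $\zeta$. The only differences from the paper are cosmetic: the paper takes $\omega(t)=g(0)(1-rt)_{+}^{1/r}$ and $\varphi(t)=(1-(g(t\xi)/g(0))^{r})/r$ with $r=\sigma$, whereas you drop the internal scaling factor and take $\omega(t)=g(0)(1-t)_{+}^{1/\sigma}$ and $\varphi(t)=1-(g(t\xi)/g(0))^{\sigma}$; these two choices give $\zeta$'s differing only by the constant factor $r$, so the monotonicity conclusion is unchanged. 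You also carry out the Beta recursion $\alpha B(\alpha,m+1)=mB(\alpha+1,m)$ explicitly to land on the stated normalizing constant, and you add a compact-support argument for $s$-concave $L^{2}$ functions; both are harmless additions that the paper leaves implicit.
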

\begin{proof}
Since $s$-concave function is log-concave function, as in the proof of Theorem \ref{5.3}, we have that the function $ g\left ( y \right ) =\mathscr{G}\left ( f,h \right )\left ( y \right ) $ is even, continuous, and achieves its maximum at $y=0$. For any $\xi \in \mathbb{S}^{n-1}$, we observe that the function $t\longmapsto g\left ( t\xi  \right )$ is positive and decreasing. Moreover, according to Lemma \ref{2aa}, the function $g$ possesses $r$-concavity where $r=\frac{s}{ns+2}$. By Lemma \ref{7bb} with $\omega \left ( t \right )=g\left ( 0 \right )\left ( 1-rt \right )^{\frac{1}{r} } _{+} $ and $\varphi \left ( t \right )=\left ( 1-\left ( g\left ( t\xi  \right )/g\left ( 0 \right )   \right )^{r}   \right )/r$, and by (\ref{1c}), (\ref{e7.1}) and (\ref{7a}), we have 
\begin{eqnarray}
	\zeta\left ( \alpha  \right )=\left ( \frac{\int_{0}^{\infty }t^{\alpha -1}g\left ( t\xi  \right )dt   }{\int_{0}^{\infty }g\left ( 0 \right ) t^{\alpha -1}\left ( 1-rt \right )^{\frac{1}{r} } _{+}dt    }  \right )^{\frac{1}{\alpha } }=\frac{r\rho _{R_{\alpha \left ( f,h \right ) } }\left ( \xi  \right )} { \left ( \alpha B \left ( \alpha ,1+\frac{1}{r}  \right ) \right )^{\frac{1}{\alpha } }    } \notag
\end{eqnarray}
for $\alpha > 0$. The conclusion follows directly from Lemma \ref{7bb}.
\end{proof}
For the limit as $s\to 0$, Theorem \ref{7cc} reduces to Theorem \ref{5.3}. In the case where $s\to \infty $
and and $f=1_{E}$, $h=1_{F}$ are indicator functions of convex bodies $F\subseteq E\subseteq \mathbb{R}^n$, Theorem \ref{7cc} yields the inclusion
\begin{eqnarray}\label{30}
\frac{1}{\left (  n   B\left ( \beta +1 ,n \right )  \right )^{\frac{1}{\beta } }  }R_{\beta  }\left ( E,F \right )\subseteq \frac{1}{\left (  n   B\left ( \alpha +1 ,n\right )  \right )^{\frac{1}{\alpha  } }
}R_{\alpha }\left ( E,F\right )
\end{eqnarray}
for $0< \alpha < \beta $. When $F=E$, this result coincides with Theorem 5.5 of R. Gardner and G. Zhang \cite{GZ98}, who established that equality in (\ref{30}) holds for $n$-dimensional simplices. 
From the proof of Theorem \ref{7cc}, under the stated assumptions, we deduce that
\begin{eqnarray}\label{31}
\frac{\rho _{R_{\alpha }\left ( f,h \right ) }\left ( \xi  \right ) }{\left ( \left ( n+\frac{2}{s}   \right )B\left ( \beta +1 ,n+\frac{2}{s} \right )  \right )^{\frac{1}{\beta } }  }\le \frac{\rho _{R_{\alpha }\left ( f,h \right )}\left ( \xi  \right )}{\left ( \left ( n+\frac{2}{s}   \right )B\left ( \alpha +1 ,n+\frac{2}{s} \right )  \right )^{\frac{1}{\alpha  } }  }
\end{eqnarray}
with equality for $\xi \in \mathbb{S}^{n-1} $ when $f\left ( x \right )=h\left ( x \right ) $ and $\mathscr{G}\left ( f,h \right )\left ( t\xi  \right )=a\left ( 1-\lambda t \right )^{\frac{1}{s} } _{+}  $ for $t> 0$, where $a\ge 0$ and $\lambda > 0$. The subsequent lemma demonstrates that the inequality in (\ref{31}) is sharp in certain directions and the optimality of the constants in Theorem \ref{7cc}.
\begin{lem}\cite[Lemma 22]{JM22}\label{5.5}
For $s>0$, suppose $f\left ( x \right ) =h\left ( x \right )=\left ( 1-\left \| x   \right \|_{\bigtriangleup _{n} }  \right )^{\frac{1}{s} } _{+}$ for $x\in \mathbb{R}^n$. Then, the function $\mathscr{G}\left ( f,h \right )$ satisfies
\begin{eqnarray}
	\mathscr{G}\left ( f,h \right )\left ( y \right )=a\left ( 1-\frac{1}{2} \left \| y \right \|_{B^{n} _{1}  }
	\right )^{n+\frac{2}{s} } 	\notag
\end{eqnarray}
for all $y\in \mathbb{R}^n$ with $y_{1}+\cdots +y_{n}=0$, where the constant $a$ is given by $a=B\left ( n,1+\frac{2}{s}  \right )/\left ( n-1 \right )!$.
\end{lem}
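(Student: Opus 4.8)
The plan is to reduce the evaluation of $\mathscr{G}(f,h)(y)$ to a single Dirichlet-type integral over the standard simplex. First I would record that the gauge function of $\bigtriangleup_{n}$ (the convex hull of the origin and $e_{1},\dots,e_{n}$) satisfies $\left\|x\right\|_{\bigtriangleup_{n}}=x_{1}+\cdots+x_{n}$ when $x\in\mathbb{R}^{n}_{+}$ and $\left\|x\right\|_{\bigtriangleup_{n}}=+\infty$ otherwise; hence $f=h$ is supported on $\bigtriangleup_{n}$ and equals $(1-x_{1}-\cdots-x_{n})^{1/s}$ there. Consequently, by \eqref{e7.1},
\begin{eqnarray*}
\mathscr{G}(f,h)(y)=\int_{D(y)}\Bigl(1-\sum_{i}(x_{i}+y_{i})\Bigr)^{1/s}\Bigl(1-\sum_{i}x_{i}\Bigr)^{1/s}\,dx,
\end{eqnarray*}
where $D(y)=\{x:x\in\bigtriangleup_{n},\ x+y\in\bigtriangleup_{n}\}$.

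Next I would exploit the hypothesis $y_{1}+\cdots+y_{n}=0$. On $D(y)$ it forces $\sum_{i}(x_{i}+y_{i})=\sum_{i}x_{i}$, so the integrand collapses to $(1-\sum_{i}x_{i})^{2/s}$, while the two membership conditions $x\in\mathbb{R}^{n}_{+}$ and $x+y\in\mathbb{R}^{n}_{+}$ combine into $x_{i}\ge\max(0,-y_{i})$ for every $i$, and both ``$\le 1$'' conditions reduce to the single inequality $\sum_{i}x_{i}\le 1$. Setting $c_{i}:=\max(0,-y_{i})$ and substituting $x_{i}=u_{i}+c_{i}$, the region becomes $\{u\in\mathbb{R}^{n}_{+}:\sum_{i}u_{i}\le 1-C\}$ with $C:=\sum_{i}c_{i}$, and the integrand becomes $(1-C-\sum_{i}u_{i})^{2/s}$. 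The elementary identity needed here is that, since $\sum_{i}y_{i}=0$,
\begin{eqnarray*}
C=\sum_{i}\max(0,-y_{i})=\tfrac{1}{2}\Bigl(\sum_{i}|y_{i}|-\sum_{i}y_{i}\Bigr)=\tfrac{1}{2}\sum_{i}|y_{i}|=\tfrac{1}{2}\left\|y\right\|_{B^{n}_{1}},
\end{eqnarray*}
because the gauge of the cross-polytope is $\left\|z\right\|_{B^{n}_{1}}=|z_{1}|+\cdots+|z_{n}|$.

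Finally I would rescale $u=(1-C)v$ (for $C<1$; when $C\ge 1$ the domain is a null set and both sides vanish, the right-hand side read with its positive part), which turns the integral into $(1-C)^{n+2/s}$ times $\int_{\bigtriangleup_{n}}(1-v_{1}-\cdots-v_{n})^{2/s}\,dv$. This last integral is the classical Dirichlet integral, which by iterated integration (equivalently, the Beta recursion) equals $\Gamma(1+2/s)/\Gamma(n+1+2/s)=B(n,1+2/s)/(n-1)!$, precisely the constant $a$; together with $1-C=1-\tfrac{1}{2}\left\|y\right\|_{B^{n}_{1}}$ this gives $\mathscr{G}(f,h)(y)=a\bigl(1-\tfrac{1}{2}\left\|y\right\|_{B^{n}_{1}}\bigr)^{n+2/s}$, as claimed.

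I do not expect a genuine obstacle. The only steps demanding care are the bookkeeping of the support and truncation constraints---verifying that the two $(1-\,\cdot\,)_{+}$ factors really merge into $(1-\sum_{i}x_{i})^{2/s}$ over exactly the domain $D(y)$---and the sign identity $\sum_{i}\max(0,-y_{i})=\tfrac{1}{2}\left\|y\right\|_{B^{n}_{1}}$; both hinge on the normalization $y_{1}+\cdots+y_{n}=0$, which is exactly why the formula is asserted only on that hyperplane. The evaluation of the Dirichlet integral is routine.
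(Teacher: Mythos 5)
Your proof is correct and complete. The paper itself does not reprove this lemma—it cites it directly from Haddad–Ludwig (\cite[Lemma 22]{JM22})—so there is no internal proof to compare against, but your argument is exactly the natural direct computation one would give: reduce the support of $f=h$ to the standard simplex using $\|x\|_{\bigtriangleup_n}=x_1+\cdots+x_n$ on $\mathbb{R}^n_+$; use $\sum_i y_i=0$ to merge the two $(1-\cdot)^{1/s}$ factors into a single $(1-\sum_i x_i)^{2/s}$ and to collapse the two simplex constraints to $x_i\ge\max(0,-y_i)$, $\sum_i x_i\le 1$; translate, observe $\sum_i\max(0,-y_i)=\tfrac12\sum_i|y_i|=\tfrac12\|y\|_{B^n_1}$; rescale to $\bigtriangleup_n$; and evaluate the Dirichlet integral $\int_{\bigtriangleup_n}(1-\sum v_i)^{2/s}\,dv=\Gamma(1+2/s)/\Gamma(n+1+2/s)=B(n,1+2/s)/(n-1)!$. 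All steps check out, including the constant and the exponent $n+2/s$, and your remark about the degenerate case $\tfrac12\|y\|_{B^n_1}\ge 1$ (both sides zero, reading the right side with its positive part) handles the only boundary issue.
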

\begin{cor}
For $s>0$, suppose $f$, $h\in L^{2}\left ( \mathbb{R}^n  \right )$ are even and s-concave on their supports. Then the following inequalities hold: for $0< \alpha < n$
\begin{eqnarray}\label{ee7.1}
	\frac{\left ( nB\left ( n,n+1+\frac{2}{s}  \right )  \right )^{\frac{\alpha }{n} }  }{B\left ( \alpha ,n+1+\frac{2}{s} \right ) } \left | S_{\alpha }\left ( f,h \right )   \right |^{\frac{\alpha }{n} } &\ge&  \left ( \left \| f \right \|_{1}\left \| h \right \|_{1}    \right )^{\frac{\alpha }{n} }\left ( \int_{\mathbb{R}^n }f\left ( x \right )h\left ( x \right )dx    \right )^{1-\frac{\alpha }{n} }\nonumber\\
	&\ge& {\left \| {f}^{\frac{n+\alpha }{2n}} {h}^{\frac{n-\alpha }{2n}}\right \|}_{\frac{2n }{n+\alpha}}{\left \| {h}^{\frac{n+\alpha }{2n}} {f}^{\frac{n-\alpha }{2n}}\right \|}_{\frac{2n }{n+\alpha}},
		\end{eqnarray}
	and for $\alpha >n $,
	\begin{eqnarray}\label{ee7.2}
		\frac{\left ( nB\left ( n,n+1+\frac{2}{s}  \right )  \right )^{\frac{\alpha }{n} }  }{B\left ( \alpha ,n+1+\frac{2}{s} \right ) } \left | S_{\alpha }\left ( f,h \right )   \right |^{\frac{\alpha }{n} } &\leq &  \left ( \left \| f \right \|_{1}\left \| h \right \|_{1}    \right )^{\frac{\alpha }{n} }\left ( \int_{\mathbb{R}^n }f\left ( x \right )h\left ( x \right )dx    \right )^{1-\frac{\alpha }{n} }\nonumber\\
		&\leq & {\left \| {f}^{\frac{n+\alpha }{2n}} {h}^{\frac{n-\alpha }{2n}}\right \|}_{\frac{2n }{n+\alpha}}{\left \| {h}^{\frac{n+\alpha }{2n}} {f}^{\frac{n-\alpha }{2n}}\right \|}_{\frac{2n }{n+\alpha}}.
	\end{eqnarray}
	\end{cor}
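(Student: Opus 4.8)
The plan is to deduce the corollary from the set inclusion of Theorem~\ref{7cc} in exactly the way Theorem~\ref{1.3} was obtained from Theorem~\ref{5.3}; the only extra task is the simplification of the Beta-function constants. Write $c_m=\big((n+\tfrac{2}{s})B(m+1,n+\tfrac{2}{s})\big)^{1/m}$, so that Theorem~\ref{7cc} asserts $\tfrac{1}{c_\beta}R_\beta(f,h)\subseteq\tfrac{1}{c_\alpha}R_\alpha(f,h)$ whenever $0<\alpha<\beta$. I would first record that an $s$-concave function in $L^2(\mathbb{R}^n)$ with $s>0$ has bounded support, hence belongs to $L^1(\mathbb{R}^n)$; thus $0<\int_{\mathbb{R}^n}fh\,dx<\infty$, and the radial mean bodies $R_n(f,h)$ and $R_\alpha(f,h)$, together with the identities \eqref{7a} and \eqref{7c}, are all meaningful.

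For $0<\alpha<n$ I would apply Theorem~\ref{7cc} with $\beta=n$, which gives $\tfrac{1}{c_n}R_n(f,h)\subseteq\tfrac{1}{c_\alpha}R_\alpha(f,h)$; then take $n$-dimensional volumes (volume is homogeneous of degree $n$), insert $|R_n(f,h)|=\|f\|_1\|h\|_1\big/\int_{\mathbb{R}^n}fh\,dx$ from \eqref{7c} and $R_\alpha(f,h)=\big(\alpha\big/\int_{\mathbb{R}^n}fh\,dx\big)^{1/\alpha}S_\alpha(f,h)$ from \eqref{7a}, and finally raise to the power $\alpha/n$. This produces
\[
\big(\|f\|_1\|h\|_1\big)^{\alpha/n}\Big(\int_{\mathbb{R}^n}fh\,dx\Big)^{1-\alpha/n}\ \le\ \frac{\big((n+\tfrac{2}{s})B(n+1,n+\tfrac{2}{s})\big)^{\alpha/n}\,\alpha}{(n+\tfrac{2}{s})B(\alpha+1,n+\tfrac{2}{s})}\ \big|S_\alpha(f,h)\big|^{\alpha/n}.
\]
Using $(n+\tfrac{2}{s})\Gamma(n+\tfrac{2}{s})=\Gamma(n+1+\tfrac{2}{s})$ one checks $(n+\tfrac{2}{s})B(n+1,n+\tfrac{2}{s})=nB(n,n+1+\tfrac{2}{s})$ and $(n+\tfrac{2}{s})B(\alpha+1,n+\tfrac{2}{s})=\alpha B(\alpha,n+1+\tfrac{2}{s})$, and substituting these collapses the constant above to $\big(nB(n,n+1+\tfrac{2}{s})\big)^{\alpha/n}\big/B(\alpha,n+1+\tfrac{2}{s})$, which is precisely the first inequality in \eqref{ee7.1}. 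For $\alpha>n$ the same computation goes through verbatim, except that Theorem~\ref{7cc} is invoked with its parameters specialised to $\alpha\mapsto n$ and $\beta\mapsto\alpha$ (legitimate exactly because $\alpha>n$), so that the inclusion and every inequality drawn from it reverse, giving the first inequality in \eqref{ee7.2}.

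The second inequalities in \eqref{ee7.1} and \eqref{ee7.2} carry no $s$-dependence and are obtained exactly as in the proof of Theorem~\ref{1.3}: write $\big(\|f\|_1\|h\|_1\big)^{\frac{\alpha}{n+\alpha}}\big(\int_{\mathbb{R}^n}fh\,dx\big)^{\frac{n-\alpha}{n+\alpha}}$ as $\big(\|f\|_1^{\frac{2\alpha}{n+\alpha}}\big(\int fh\,dx\big)^{\frac{n-\alpha}{n+\alpha}}\big)^{1/2}\big(\|h\|_1^{\frac{2\alpha}{n+\alpha}}\big(\int fh\,dx\big)^{\frac{n-\alpha}{n+\alpha}}\big)^{1/2}$, bound each of the two factors by Hölder's inequality with the conjugate exponents $\tfrac{n+\alpha}{2\alpha}$ and $\tfrac{n+\alpha}{n-\alpha}$ applied respectively to $\int_{\mathbb{R}^n}f^{\frac{2\alpha}{n+\alpha}}(fh)^{\frac{n-\alpha}{n+\alpha}}dx$ and $\int_{\mathbb{R}^n}h^{\frac{2\alpha}{n+\alpha}}(fh)^{\frac{n-\alpha}{n+\alpha}}dx$, and raise to the power $\tfrac{n+\alpha}{n}$. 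When $0<\alpha<n$ both exponents are positive, so Hölder gives the inequality in the direction of \eqref{ee7.1}; when $\alpha>n$ the exponent $\tfrac{n+\alpha}{n-\alpha}$ is negative, the reverse Hölder inequality applies, and the inequality flips as in \eqref{ee7.2}.

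I do not expect a genuine analytic obstacle here: everything of substance already sits in Theorem~\ref{7cc}. The step that needs care is the bookkeeping — getting the two Beta-function identities right so that the constant takes the asserted form, and keeping the direction of the inequality consistent through the three reductions (the volume comparison, the substitution via \eqref{7a}–\eqref{7c}, and the Hölder step), each of which reverses when one passes from the regime $0<\alpha<n$ to $\alpha>n$.
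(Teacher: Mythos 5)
Your argument is correct and follows essentially the same route as the paper's: specialise Theorem~\ref{7cc} to $\beta=n$ (or to the pair $(n,\alpha)$ when $\alpha>n$), take volumes, substitute \eqref{7a} and \eqref{7c}, raise to the power $\alpha/n$, and collapse the constant with the Beta-function identity $pB(p,q+1)=qB(p+1,q)$, while the second inequality is the same H\"older (resp.\ reverse H\"older) computation used for Theorem~\ref{1.3}. The only additions you make beyond what the paper records are the explicit verification that an even $s$-concave $L^2$ function has bounded support and the spelled-out sign analysis in the reverse H\"older step; both are correct and merely fill in details the paper leaves implicit.
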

	\begin{proof}
		By $0<\alpha <n$ and Theorem \ref{7cc}, we can see that
		\begin{eqnarray*}
			\frac{1}{\left ( \left ( n+\frac{2}{s}   \right )B\left ( n +1 ,n+\frac{2}{s} \right )  \right )^{\frac{1}{n } }  }R_{n  }\left ( f,h \right )\subseteq \frac{1}{\left ( \left ( n+\frac{2}{s}   \right )B\left ( \alpha +1 ,n+\frac{2}{s} \right )  \right )^{\frac{1}{\alpha  } }  }R_{\alpha }\left ( f,h \right ) \notag.
		\end{eqnarray*} 
		Thus
		\begin{eqnarray*}
			\frac{1}{\left  ( n+\frac{2}{s}   \right )B\left ( n +1 ,n+\frac{2}{s} \right )   }\left|R_{n  }\left ( f,h \right )\right|\le \frac{1}{\left ( \left ( n+\frac{2}{s}   \right )B\left ( \alpha +1 ,n+\frac{2}{s} \right )  \right )^{\frac{n}{\alpha  } }  }\left|R_{\alpha }\left ( f,h \right )\right|.
		\end{eqnarray*}
		Therefore, by (\ref{7c}) and (\ref{7a}), we have 
		\begin{eqnarray*}
			&&\frac{1}{ \left ( n+\frac{2}{s}   \right )B\left ( n +1 ,n+\frac{2}{s} \right )    }\frac{{||f||}_{1}{||h||}_{1}}{{\int_{\mathbb{R}^n} }fhdx}\nonumber\\
			&\le& \left|\frac{1}{\left ( \left ( n+\frac{2}{s}   \right )B\left ( \alpha +1 ,n+\frac{2}{s} \right )  \right )^{\frac{1}{\alpha  } }  }\left(\frac{\alpha}{\int_{\mathbb{R}^n} fhdx}\right)^{\frac{1}{\alpha}}{S}_{\alpha }(f,h)\right|,
		\end{eqnarray*}
		Now raise both sides to the $\frac{\alpha}{n}$-th power and by $pB(p,q+1)=qB(p+1,q)$, we have
		\begin{eqnarray*}
			\left ( \left \| f \right \|_{1}\left \| h \right \|_{1}    \right )^{\frac{\alpha }{n} }\left ( \int_{\mathbb{R}^n }f\left ( x \right )h\left ( x \right )dx    \right )^{1-\frac{\alpha }{n} }\le \frac{\left ( nB\left ( n,n+1+\frac{2}{s}  \right )  \right )^{\frac{\alpha }{n} }  }{B\left ( \alpha ,n+1+\frac{2}{s} \right ) } \left | S_{\alpha }\left ( f,h \right )   \right |^{\frac{\alpha }{n} }.
		\end{eqnarray*}
		We complete the proof of the first inequality of (\ref{ee7.1}).
		Since the second inequality of inequalities (\ref{ee7.1}) is the same as the second inequality of inequalities (\ref{1.333}), we omit its proof. We can prove inequalities (\ref{ee7.2}) following the same method as the proof of inequalities (\ref{ee7.1}).
	\end{proof}

\section{Competing interests}
No competing interest is declared.

\section{Acknowledgments}
This work is supported in part by funds from the National Natural Science Foundation of China NSFC 12371137 and NSFC 11971080.




\vspace{1.5ex}

\noindent Youjiang Lin\\
\noindent\small  School of Mathematics and Statistics, Chongqing Technology and Business University, Chongqing 400067, PR China\\
\noindent\small{ School of Mathematical Sciences, Hebei Normal University, Shijiazhuang, Hebei,
050024, PR China} 
E-mail address: yjl@ctbu.edu.cn,yjlin@hebtu.edu.cn\\

\noindent Jinghong Zhou, Jiaming Lan\\
\noindent\small  School of Mathematics and Statistics, Chongqing Technology and Business University, Chongqing 400067, PR China\\
\noindent\small{ zjh@ctbu.edu.cn, lanjiaming@ctbu.edu.cn}\\
\end{document}